\pgfplotsset{compat=newest}
\newcommand{\PP}{\mathcal{P}}
\newcommand{\MM}{\mathcal{M}}
\newcommand{\DD}{\mathcal{D}}
\newcommand{\lk}{\mathsf{lk}}
\newcommand{\st}{\mathsf{st}}
\newcommand{\NM}{\mathsf{NM}}
\newcommand{\PM}{\mathsf{PM}}
\newcommand{\FC}{\mathsf{FC}}
\newcommand{\BFC}{\mathsf{BFC}}
\newtheorem{thm}{Theorem}[section]
\newtheorem{lemma}[thm]{Lemma}
\newtheorem{claim}[thm]{Claim}
\newtheorem{prop}[thm]{Proposition}
\newtheorem{cor}[thm]{Corollary}
\theoremstyle{definition}
\newtheorem{remark}[thm]{Remark}
\address{Andreas F. Holmsen, \hfill \hfill \linebreak 
	Department of Mathematical Sciences,  \hfill \hfill \linebreak
	KAIST, 
	Daejeon, South Korea.  \hfill \hfill }
\email{andreash@kaist.edu}
\address{Seunghun Lee, \hfill \hfill \linebreak 
	Department of Mathematical Sciences,\hfill \hfill \linebreak
	Binghamton University,
	Binghamton, NY, USA.  \hfill \hfill }
\email{shlee@binghamton.edu}
\thanks{Both authors were partially supported by the National Research Foundation of Korea (NRF) grants funded by the Ministry of Education (NRF-2016R1D1A1B03930998) and the Ministry of Science and ICT (No. 2020R1F1A1A0104849011).}
\title{Leray numbers of complexes of graphs with bounded matching number}
\author{Andreas F. Holmsen and Seunghun Lee}
\begin{document}
	\maketitle
	
	\begin{abstract}
		Given a graph $G$ on the vertex set $V$, the {\em non-matching complex} of $G$, denoted by $\NM_k(G)$, is the family of subgraphs $G' \subset G$ whose matching number $\nu(G')$ is strictly less than $k$. As an attempt to extend the result by Linusson, Shareshian and Welker on the homotopy types of $\NM_k(K_n)$ and $\NM_k(K_{r,s})$ to arbitrary graphs $G$, we show that (i) $\NM_k(G)$ is $(3k-3)$-Leray, and (ii) if $G$ is bipartite, then $\NM_k(G)$ is $(2k-2)$-Leray. This result is obtained by analyzing the homology of the links of non-empty faces of the complex $\NM_k(G)$, which vanishes in all dimensions $d\geq 3k-4$, and all dimensions $d \geq 2k-3$ when $G$ is bipartite. As a corollary, we have the following
		rainbow matching theorem which generalizes a result by Aharoni, Berger, Chudnovsky, Howard and Seymour: Let $E_1, \dots, E_{3k-2}$ be non-empty edge subsets of a graph and suppose that $\nu(E_i\cup E_j)\geq k$ for every $i\ne j$. Then $E=\bigcup E_i$ has a rainbow matching of size $k$. Furthermore, the number of edge sets $E_i$ can be reduced to $2k-1$ when $E$ is the edge set of a bipartite graph.
	\end{abstract}
	
	\section{Introduction}
	\subsection{Background}
	A \textit{simplicial complex} $\mathsf{K}$ on the ground set $E$ is a family of subsets of $E$, which satisfies the hereditary property: if $\sigma \subseteq \tau$ and $\tau \in \mathsf{K}$, then $\sigma \in \mathsf{K}$. In the particular case when $\mathsf{K}$ is a simplical complex which consists of graphs on a fixed vertex set, we call $\mathsf{K}$ a \textit{graph complex}. In the case of graph complexes, we consider a fixed vertex set, and we identify a graph $G$ in the graph complex $\mathsf{K}$ with its edge set $E(G) \subseteq \binom{V(G)}{2}$. All graphs considered in this paper are finite, simple, and undirected. It is also assumed that the empty graph $\emptyset$, that is, the graph with no edges, belongs to the graph complex.
	
	There are many graph complexes, whose topological properties -- homology, homotopy types, connectedness, Cohen-Macaulayness, and Euler characteristic -- have been extensively studied. Such examples include the complex of matchings, forests, bipartite graphs, non-Hamiltonian graphs, not $k$-connected graphs, and $t$-colorable graphs. Interested readers may find a detailed survey on the topic in the monograph by Jonsson \cite{jonsson_complex_graph} (in particular, Chapter 7).  
	
	\vspace{1ex}
	
	In this paper we focus on the complex of graphs which do not have matchings of size $k$. Here is a precise definition. Let $G$ be a graph. The {\em matching number} $\nu(G)$ is the size of a maximum matching in $G$, that is, the maximum number of pairwise disjoint edges in $G$. Given a graph $G$ on the vertex set $V$ we define the {\em non-matching complex} of $G$,  $\NM_k(G)$, as the family of subgraphs $G'$ of $G$ whose matching number $\nu(G')$ is strictly less than $k$. That is,
	\[\NM_k(G) = \{ G' \subseteq G: \nu(G')< k  \}.\]
	
	When $G$ is a complete graph or a complete bipartite graph, the exact homotopy type of the non-matching complex is known.
	Linusson, Shareshian and Welker \cite{linusson_bounded_matching_complex} 
	showed that $\NM_k(K_n)$ and $\NM_k(K_{r,s})$ are homotopy equivalent to wedges of spheres of dimension $3k-4$ and $2k-3$, respectively, giving exact formulas for the number of spheres in the wedges. Here, $K_n$ is the complete graph on $n$ vertices, and $K_{r,s}$ is the complete bipartite graph with bipartition $V_1\cup V_2$ where $|V_1|=r$ and $|V_2|=s$. Note that it is assumed that $n\geq 2k$ and $r,s\geq k$, or else both complexes are just a simplex. One of our goals here is to extend their results to arbitrary graphs.
	
	\subsection{Main results} \label{subs:mains}
	One of the consequences of the results of Linusson et al. is that for $G=K_n$ or $G=K_{r,s}$, the non-vanishing reduced homology of $\NM_k(G)$ is concentrated in a single dimension. This is not the case in general though. For example, the non-matching complex $\mathsf{NM}_3(G)$ of the graph depicted in Figure \ref{fig:non-vanish} has non-vanishing homology in dimensions four and five. (We invite the reader to come up with their own proof of this fact.)
	
	Our first result shows that for any graph $G$, the dimension in which $\mathsf{NM}_k(G)$ has non-trivial homology is never greater than that of $\mathsf{NM}_k(K_n)$.
	\begin{figure}
		\centering
		\begin{tikzpicture}
		\begin{scope}
		\coordinate (v1) at (0:1.25cm);
		\coordinate (v2) at (60:1.25cm);
		\coordinate (v3) at (120:1.25cm);
		\coordinate (v4) at (180:1.25cm);
		\coordinate (v5) at (240:1.25cm);
		\coordinate (v6) at (300:1.25cm);
		\path (v1)--(v2) coordinate[midway] (v7);
		\draw   (v1) -- (v2)
		(v1) -- (v3)
		(v1) -- (v4)
		(v1) -- (v5)
		(v1) -- (v6);
		\draw   (v2) -- (v3)
		(v2) -- (v4)
		(v2) -- (v5)
		(v2) -- (v6);
		\draw   (v3) -- (v4)
		(v3) -- (v5)
		(v3) -- (v6);
		\draw   (v4) -- (v5)
		(v4) -- (v6);
		\draw   (v5) -- (v6);
		\fill[black] (v1) circle (.33ex);
		\fill[black] (v2) circle (.33ex);
		\fill[black] (v3) circle (.33ex);
		\fill[black] (v4) circle (.33ex);
		\fill[black] (v5) circle (.33ex);
		\fill[black] (v6) circle (.33ex);
		\fill[black] (v7) circle (.33ex);
		\end{scope}
		\end{tikzpicture}
		\caption{\small The complete graph on six vertices with one edge subdivided. The graph complex $\mathsf{NM}_3(G)$ has non-vanishing homology in dimensions four and five.}
		\label{fig:non-vanish}
	\end{figure}
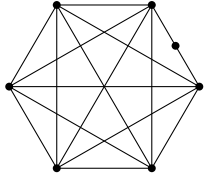

	\begin{thm}\label{leray-result}
		Let $k\geq 2$ be an integer and $G$ a graph. The complex $\NM_k(G)$ has vanishing homology in all dimensions $d\geq 3k-3$. Moreover, if $G$ is bipartite, then $\NM_k(G)$ has vanishing homology in all dimensions $d\geq 2k-2$.
	\end{thm}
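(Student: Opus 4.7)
The plan is to induct on $|E(G)|$ for fixed $k$, with the base case $E(G)=\emptyset$ being immediate since $\NM_k(G)=\{\emptyset\}$. For the inductive step, pick any edge $e\in E(G)$ and exploit the Mayer--Vietoris decomposition
\[
\NM_k(G)=\NM_k(G-e)\cup \st_{\NM_k(G)}(\{e\}),
\]
whose intersection is $\lk_{\NM_k(G)}(\{e\})$ and whose second piece is a cone (hence contractible). The resulting long exact sequence
\[
\cdots\to \tilde{H}_d(\lk(\{e\}))\to \tilde{H}_d(\NM_k(G-e))\to \tilde{H}_d(\NM_k(G))\to \tilde{H}_{d-1}(\lk(\{e\}))\to\cdots
\]
reduces the theorem to the following technical core.

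\textbf{Link Lemma.} For every non-empty face $\sigma\in\NM_k(G)$, $\tilde{H}_d(\lk(\sigma,\NM_k(G)))=0$ for all $d\geq 3k-4$, and for all $d\geq 2k-3$ when $G$ is bipartite.

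Granting the Link Lemma, for $d\geq 3k-3$ both $\tilde{H}_d(\NM_k(G-e))=0$ (by the edge induction) and $\tilde{H}_{d-1}(\lk(\{e\}))=0$ (since $d-1\geq 3k-4$), which forces $\tilde{H}_d(\NM_k(G))=0$; the bipartite case follows identically with tighter constants.

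The main obstacle is the Link Lemma, which I would prove together with the main theorem by a joint induction on $k$ so that smaller-$k$ versions of both statements are available. For the critical case $\sigma=\{e\}$ with $e=uv$, a short matching computation shows that $\tau\in\lk(\{e\})$ iff $\nu(\tau)<k$ and $\nu(\tau\cap E(G[V\setminus\{u,v\}]))<k-1$. In particular $\NM_{k-1}(G[V\setminus\{u,v\}])$ embeds into $\lk(\{e\})$ as a subcomplex, which by the main theorem for $k-1$ has vanishing homology in dimensions $\geq 3(k-1)-3=3k-6$. The remaining work is to control the ``peripheral'' edges incident to $u$ or $v$: intuitively there are two extra matching slots available (one at each of $u,v$), and the task is to show that they inflate the vanishing range by at most two dimensions, yielding $3k-4$. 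I would make this rigorous by iterating the single-edge Mayer--Vietoris within the link itself: deleting a peripheral edge $e'$ splits $\lk(\{e\})$ into the link $\lk(\{e\},\NM_k(G-e'))$ in a graph with one fewer edge and the link $\lk(\{e,e'\},\NM_k(G))$ of a larger face, both of which are covered by the joint induction (the latter fitting naturally into an auxiliary induction on $|\sigma|$ via $\lk(\sigma)=\lk(\{e\},\lk(\sigma\setminus\{e\}))$). In the bipartite case, the endpoints $u,v$ lie in opposite color classes, which collapses one of the two extra slots and accounts for the stronger bound $2k-3$.
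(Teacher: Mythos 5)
Your reduction of the theorem to the Link Lemma is fine, and it is essentially the paper's own reduction: since the vertices of $\NM_k(G)$ are the edges of $G$, your edge-induction with the cover $\NM_k(G)=\NM_k(G-e)\cup\st_{\NM_k(G)}(\{e\})$ is the same Mayer--Vietoris argument the paper uses to deduce Theorem \ref{leray-result} from Theorem \ref{link-result} (Proposition \ref{hereditary} and Corollary \ref{near d ler}). The problem is that the Link Lemma \emph{is} Theorem \ref{link-result}, i.e.\ the actual content of the paper, and your sketch of it has a genuine gap. In your inner Mayer--Vietoris step, writing $\lk_{\NM_k(G)}(\{e\})=\lk_{\NM_k(G-e')}(\{e\})\cup\st(\{e'\})$ with intersection $\lk_{\NM_k(G)}(\{e,e'\})$, exactness forces you to control $\tilde{H}_{d-1}(\lk(\{e,e'\}))$, one dimension \emph{lower} than the bound you are trying to prove. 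So each peeled edge demands vanishing of the link of a larger face in a strictly lower dimension, and the induction does not close at $3k-4$: to run it you would need links of $j$-element faces to vanish from dimension $3k-3-j$ on, which is a stronger statement that is simply false in general (already the bound $3k-4$ is tight for $\NM_k(K_n)$ by Linusson--Shareshian--Welker, independently of the size of the face). The auxiliary identity $\lk(\sigma)=\lk_{\lk(\sigma\setminus\{e\})}(\{e\})$ does not rescue this, because $\lk(\sigma\setminus\{e\})$ is not a non-matching complex of any graph (membership of $\tau$ depends on $\nu(\tau\cup\sigma)$, not on a threshold for $\nu(\tau)$), so your joint induction hypotheses, stated for complexes $\NM_k(G)$, do not apply to it; the class of objects in your induction is not closed under the operations you perform.

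The remaining ingredients are likewise not substantiated: the observation that $\NM_{k-1}(G-u-v)$ sits inside $\lk(\{e\})$ as a subcomplex gives no homological information by itself, the heuristic that ``two extra matching slots inflate the vanishing range by at most two dimensions'' is not a provable local statement (non-vanishing homology of these complexes can spread over several dimensions, cf.\ the example in Figure \ref{fig:non-vanish}), and the bipartite improvement $2k-3$ does not come from ``collapsing one slot'' but from the structural difference in the Gallai--Edmonds decomposition (all components of $D$ are singletons in the bipartite case, versus factor-critical components contributing $\tfrac{3}{2}(|D_i|-1)$ in general). In the paper, the Link Lemma is proved not by a soft Mayer--Vietoris induction but by constructing explicit acyclic matchings in the sense of discrete Morse theory on the families $\PM_H$, $\FC_H$, $\BFC_{(X,Y,Z;H)}$ (Propositions \ref{size perfect m}, \ref{size-d0}, \ref{size-yz}), combined via the join and projection lemmas and the Gallai--Edmonds decomposition to bound the dimensions of critical cells in $\lk_{\NM_k(K_V)}(H)$ and $\lk_{\NM_k(K_{X,Y})}(H)$, and only then transferred to arbitrary $G$ by heredity of the link-vanishing property. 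Without an argument of comparable strength for the Link Lemma, your proposal proves only the (easy) deduction step, not the theorem.
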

	For a simplicial complex $\mathsf{K}$ 	let $\tilde{H}_i(\mathsf{K})$ denote the reduced homology of $\mathsf{K}$ with coefficients in a fixed field $\mathbb{F}$. The complex $\mathsf{K}$ is  {\em $d$-Leray} (over $\mathbb{F}$) if $\tilde{H}_i(\mathsf{L}) = 0 $ for all $i\geq d$ and for every induced subcomplex $\mathsf{L}\subseteq \mathsf{K}$. There is significant interest in the
	combinatorial properties of Leray complexes, especially in connection with Helly-type theorems \cite{kalai1, kalai2, kalai-meshulam3, kalai-meshulam1, colin}. The Leray property also comes up in commutative algebra where it corresponds to the Castelnuovo--Mumford regularity of a square-free monomial ideal \cite{kalai-meshulam2}.
	
	By observing that 
	the induced subcomplexes of $\mathsf{NM}_k(K_n)$ are precisely the complexes $\mathsf{NM}_k(G)$ where $G\subseteq K_n$,  Theorem \ref{leray-result} can be restated as: $\mathsf{NM}_k(K_n)$ is $(3k-3)$-Leray.
	
	The {\em link} of $\sigma \in \mathsf{K}$ is the complex $\lk_\mathsf{K}(\sigma) = \{\tau\subseteq E : \tau\cap \sigma =\emptyset, \tau\cup \sigma \in \mathsf{K}\}$. A well-known equivalence states that $\mathsf{K}$ is $d$-Leray if and only if $\tilde{H}_i(\lk_\mathsf{K}(\sigma)) = 0$ for every $i\geq d$ and $\sigma\in \mathsf{K}$ \cite[Proposition 3.1]{kalai-meshulam2}. (Note that $\mathsf{K} = \lk_\mathsf{K}(\emptyset)$.) Our second results shows that 
	the bound in Theorem \ref{leray-result} can be slightly reduced when the empty face is excluded.
	
	\begin{thm} \label{link-result}
		Let $k\geq 2$ be an integer and $G$ a graph.
		The link of any non-empty face of the complex $\NM_k(G)$ has vanishing homology in all dimensions $d\geq 3k-4$. Moreover, if $G$ is bipartite, then the link of any non-empty face of the complex $\NM_k(G)$ has vanishing homology in all dimensions $d\geq 2k-3$.
	\end{thm}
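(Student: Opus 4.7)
The plan is to induct on $k$. The base case $k = 2$ is by direct case analysis: a non-empty face $\sigma \in \NM_2(G)$ is an edge set in which every two edges share a vertex, so $\sigma$ is either a star at a common vertex or a triangle (the latter only in the non-bipartite setting). In each case $\lk(\sigma)$ can be computed explicitly; it is either a simplex (contractible), a pair of disjoint simplices ($\simeq S^0$), the complex $\{\emptyset\}$, or two simplices connected by a few $1$-faces (homotopy equivalent to a wedge of circles), and in all cases $\tilde{H}_d(\lk(\sigma))$ vanishes for $d \geq 2$ in general and for $d \geq 1$ when $G$ is bipartite.

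For the inductive step with $k \geq 3$, fix a non-empty $\sigma \in \NM_k(G)$, pick an edge $e = uv \in \sigma$ belonging to a maximum matching of $\sigma$, and set $L = \lk_{\NM_k(G)}(\sigma)$. I would decompose the ground set $E(G) \setminus \sigma$ of $L$ into the edges $E_{uv}$ incident to $u$ or $v$ and the edges $E_0$ of $G - u - v$, and consider $L_0 := L \cap 2^{E_0}$, the subcomplex of faces avoiding $\{u, v\}$. A matching computation shows that a face $\tau \in L_0$ forces $\nu((\sigma \cap E_0) \cup \tau) < k - 1$ (since the matching $\{e\}$ contributes $1$ and is vertex-disjoint from $\tau$); hence $L_0 \subseteq \lk_{\NM_{k-1}(G - u - v)}(\sigma \cap E_0)$, with equality when $\sigma = \{e\}$. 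By the inductive hypothesis (Theorem \ref{link-result} or Theorem \ref{leray-result} for $k - 1$), $\tilde{H}_d(L_0)$ vanishes in the required range. For each $f \in E_{uv}$ that is a vertex of $L$, the star $\st_L(f)$ is contractible (cone with apex $f$), and $L_0 \cup \bigcup_f \st_L(f) = L$. A nerve-theorem / Mayer--Vietoris argument on this cover then reduces the computation of $\tilde{H}_d(L)$ to the homology of the pairwise and higher intersections of the stars.

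The main obstacle is controlling those intersections. A typical double intersection $\st_L(f) \cap \st_L(f')$ with $f = uw$ and $f' = vx$ for $w \ne x$ consists of faces $\tau$ whose union with $\sigma \cup \{f, f'\}$ remains in $\NM_k(G)$; since $\{f, f'\}$ is itself a matching of size $2$, the effective matching threshold drops by $2$ and the intersection reduces to a link problem in a non-matching complex with parameter $k - 2$ on the graph $G - \{u, v, w, x\}$, controlled by the $k - 2$ case of the induction. In the bipartite setting, the fact that $u$ and $v$ lie in different parts rules out triangles through $e$ and common-neighbor configurations, eliminating the problematic star intersections and producing the tighter bound $d \geq 2k - 3$. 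Finally, Theorem \ref{leray-result} follows from Theorem \ref{link-result} by a standard Mayer--Vietoris induction on $|E(G)|$ using $\NM_k(G) = \st(e) \cup \NM_k(G - e)$, the contractibility of $\st(e)$, and the fact that $\st(e) \cap \NM_k(G - e) = \lk_{\NM_k(G)}(\{e\})$ has $(3k - 4)$-vanishing homology by the theorem just proved.
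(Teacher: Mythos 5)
Your overall route (induction on $k$, covering the link by the subcomplex $L_0$ of faces avoiding $u,v$ together with the stars of the edges at $u$ or $v$) is genuinely different from the paper, which instead builds explicit acyclic matchings with bounded critical cells via the Gallai--Edmonds decomposition for $K_V$ and $K_{X,Y}$ and then passes to arbitrary $G$ by a hereditary Mayer--Vietoris argument. Your final paragraph, deducing Theorem \ref{leray-result} from Theorem \ref{link-result} via $\NM_k(G)=\st(e)\cup \NM_k(G-e)$ and $\st(e)\cap\NM_k(G-e)=\lk_{\NM_k(G)}(\{e\})$, is fine and is essentially the paper's Corollary \ref{near d ler}. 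But the inductive step for Theorem \ref{link-result} itself has two genuine gaps. First, the vanishing of $\tilde H_d(L_0)$ does not follow from the inductive hypothesis: you only show $L_0\subseteq \lk_{\NM_{k-1}(G-u-v)}(\sigma\cap E_0)$, and containment in a complex with vanishing homology says nothing about the homology of the subcomplex. The containment is in general strict and $L_0$ is not an induced subcomplex of that link, because membership of $\tau$ in $L_0$ is the condition $\nu(\sigma\cup\tau)<k$, in which the edges of $\sigma$ incident to $u$ and $v$ can contribute \emph{two} matching edges rather than the single edge $e$; the correct identification is $L_0=\lk_{\NM_k(G')}(\sigma)$ with $G'$ the graph on edge set $\sigma\cup E_0$, i.e.\ a link problem with the \emph{same} parameter $k$, which your induction on $k$ alone does not reach.

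Second, and more seriously, the step you yourself call ``the main obstacle'' is not resolved and the reduction you propose for it is incorrect. The intersection $\st_L(f)\cap\st_L(f')$ consists of those $\tau\in L$ satisfying the two separate conditions $\nu(\sigma\cup\tau\cup\{f\})<k$ and $\nu(\sigma\cup\tau\cup\{f'\})<k$; this is not the set $\{\tau:\nu(\sigma\cup\tau\cup\{f,f'\})<k\}$, and it is not a link in a non-matching complex with parameter $k-2$ on $G-\{u,v,w,x\}$ (faces in the intersection may well use edges at $w$ and $x$, and may have $\nu(\sigma\cup\tau\cup\{f,f'\})=k$). Moreover the cover has many elements, so a nerve-theorem argument would require contractibility (or at least controlled homology) of \emph{all} higher intersections, or else a Mayer--Vietoris spectral sequence with explicit dimension bookkeeping; none of this is supplied, and the bipartite claim that the problematic intersections disappear is not justified either, since the difficulty has nothing to do with triangles through $e$. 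The example in Figure \ref{fig:non-vanish} of the paper, where homology lives in two different dimensions, indicates that these links are not homotopically simple, which is precisely why the paper avoids such a cover argument and works instead with discrete Morse theory and the Gallai--Edmonds structure.
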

	
	In fact, Theorem \ref{leray-result} can be deduced from Theorem \ref{link-result} by a simple application of the Mayer--Vietoris sequence. This reduction is independent of graph complexes and is given in Section \ref{homology}. Therefore the majority of this paper is devoted to the proof of Theorem \ref{link-result}. Our proof modifies and extends the methods by Linusson et al. \cite{linusson_bounded_matching_complex} which are based on discrete Morse theory and the Gallai--Edmonds decomposition theorem.
	
	\subsection{Applications} Although topological results on graph complexes are of significant interest in their own right, and sometimes
	require nontrivial graph-theoretical results, it is natural to wonder about the reverse direction. As Jonsson points out:
	\begin{quote}
		\textit{``Alas, we know very little about the existence of results in the other direction, i.e., proofs of nontrivial graph-theoretical theorems based on topological properties of certain graph complexes." \cite[page 13]{jonsson_complex_graph}}
	\end{quote}
	Indeed, Theorem \ref{link-result} was motivated by such an application. 
	
	Given a collection of edge sets $E_1, \dots, E_m$ of some underlying graph, a {\em rainbow matching} with respect to the collection is a matching in $E=\bigcup E_i$, where each edge of the matching is chosen from distinct $E_i$. (Note that the $E_i$ do not need to be disjoint.) Assuming $\nu(E_i) \geq k$ for all $i$, one may ask: {\em How many edge sets are needed to guarantee the existence of a rainbow matching of size $k$?} 
	
	A theorem by Drisko \cite{drisko} states that if the edge sets $E_i$ are chosen from $K_{k,n}$ with $k\leq n$, then $2k-1$ edge sets suffice. This result was generalized in  \cite{rainbow_matching_bipartite}, where it was shown that the same conclusion holds when the edge sets $E_i$ are chosen from $K_{n,n}$. Moreover, simple examples show that the number $2k-1$ is tight. The result was 
	further generalized to the setting of fractional matchings on $r$-uniform hypergraphs in \cite{aharoni_holzman_jiang}. 
	Our first application of Theorem \ref{link-result} is the following generalization of Drisko's theorem.
	
	\begin{thm}\label{drisko-general}
		Let $E_1, \dots, E_{2k-1}$ be non-empty edge subsets of a bipartite graph and suppose $\nu(E_i\cup E_j)\geq k$ for every $i\neq j$. Then $E = \bigcup E_i$ has a rainbow matching of size $k$.
	\end{thm}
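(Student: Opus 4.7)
The plan is to proceed by contradiction, combining Theorem~\ref{link-result} with a colorful Helly-type theorem for complexes with strong link vanishing.

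Suppose for contradiction that $E = \bigcup_{i=1}^{2k-1} E_i$ admits no rainbow matching of size $k$. Let $G$ denote the bipartite graph with edge set $E$ and set $\mathsf{K} = \NM_k(G)$. By Theorem~\ref{leray-result}, $\mathsf{K}$ is $(2k-2)$-Leray, and by Theorem~\ref{link-result}, every non-empty link in $\mathsf{K}$ has vanishing reduced homology in all dimensions $\geq 2k-3$.

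The key step is to apply a colorful Helly-type theorem of Kalai--Meshulam type, sharpened by the link condition, which I state in the form I would invoke: if $\mathsf{K}$ is a simplicial complex on $V$ such that $\tilde{H}_i(\lk_\mathsf{K}(\sigma)) = 0$ for every non-empty $\sigma \in \mathsf{K}$ and every $i \geq d-1$, and $V_1, \ldots, V_{d+1}$ are non-empty subsets of $V$ with $V_i \cup V_j \notin \mathsf{K}$ for all $i \neq j$, then there exist pairwise distinct $v_i \in V_i$ with $\{v_1, \ldots, v_{d+1}\} \notin \mathsf{K}$. I would apply this with $d = 2k-2$ and $V_i = E_i$: the pairwise non-face hypothesis becomes $\nu(E_i \cup E_j) \geq k$ (given), and the link hypothesis is exactly Theorem~\ref{link-result}. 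This yields pairwise distinct edges $e_i \in E_i$ with $\nu(\{e_1, \ldots, e_{2k-1}\}) \geq k$; extracting a matching of size $k$ from $\{e_1, \ldots, e_{2k-1}\}$ then gives a rainbow matching of size $k$, since the $e_i$ are distinct and each carries its own color index, contradicting the initial assumption.

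The main obstacle is justifying the colorful Helly-type statement used above. The classical Kalai--Meshulam colorful Helly theorem requires $d+2$ sets and uses only that $\mathsf{K}$ is $d$-Leray; here we have only $d+1 = 2k-1$ color classes, one short of that hypothesis. The sharper link vanishing in Theorem~\ref{link-result} is precisely what is needed to trade one color class for the improved local topology. Making this rigorous amounts to a Mayer--Vietoris spectral sequence argument on the cover $\{\mathsf{K}[V_i]\}_{i=1}^{d+1}$ of the induced subcomplex $\mathsf{K}[V_1 \cup \cdots \cup V_{d+1}]$: the link condition forces the vanishing of the spectral sequence terms that would otherwise require an additional color class, allowing the conclusion to be reached with one fewer set than in the classical theorem.
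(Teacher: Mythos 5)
Your reduction has the right shape, but its load-bearing step is precisely what you do not prove. The ``colorful Helly-type theorem'' you invoke --- only $d+1=2k-1$ color classes, compensated by vanishing of $\tilde{H}_i(\lk_\mathsf{K}(\sigma))$ for all non-empty $\sigma$ and $i\geq d-1$ --- is, for pairwise disjoint classes, exactly the partition-matroid case of the theorem the paper imports as Theorem \ref{holmsen}: take $d_0=2k-3$, so that near-$d_0$-Leray is the conclusion of Theorem \ref{link-result}, and a partition matroid on $2k-1=d_0+2$ classes has rank $d_0+2$; the conclusion that some face of $\mathsf{K}$ contains two whole classes is the contrapositive of your ``colorful non-face'' claim. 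The paper does not reprove this statement; it cites it, and it is a genuinely nontrivial result. Your proposed justification --- a Mayer--Vietoris spectral sequence for the cover $\{\mathsf{K}[V_i]\}$ --- is only a gesture: even the classical Kalai--Meshulam colorful Helly theorem (with $d+2$ classes and the plain $d$-Leray hypothesis) does not follow from such a direct covering argument, and the trade of one class for the stronger link hypothesis is exactly the content of Theorem \ref{holmsen}, not something that falls out of the vanishing of a few spectral sequence terms. So as written the proof has a hole at its central step; it can be repaired by citing that theorem, but then the argument essentially coincides with the paper's.

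A second, independent gap is that the sets $E_i$ need not be disjoint, so there is no partition matroid on $E$, and your version of the Helly-type statement with overlapping $V_i$ and ``pairwise distinct'' representatives is not the form in which any such theorem is available. The paper handles this by passing to labelled edges $\tilde{E}_i=E_i\times\{i\}$ and the lifted complex $\{\sigma\subseteq\tilde{E}:\nu(\pi(\sigma))<k\}$, and it must then \emph{verify} that this lifted complex is still near-$(2k-3)$-Leray: links of faces meeting some fibre $\pi^{-1}(e)$ only partially admit complete acyclic matchings (Lemma \ref{product} and Theorem \ref{morse_fundamental}), while the remaining links are homotopy equivalent to links in $\NM_k(E)$. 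That verification is a real step which your outline omits entirely; Theorem \ref{link-result} applies to $\NM_k(E)$, not automatically to the labelled lift. (A minor point in the other direction: the distinctness of the chosen edges $e_i$ is not what makes the extracted matching rainbow --- the labelled-edge bookkeeping is, which is another reason the lift cannot be bypassed.)
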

	
	When the edge sets are not confined to a bipartite graph, Aharoni, Berger, Chudnovsky, Howard and Seymour \cite{rainbow_matching_general} showed that  $3k-2$ edges sets suffice.  
	Our second application of Theorem \ref{link-result} is the following generalization of the result from \cite{rainbow_matching_general}.
	\begin{thm}\label{rainbow-matching-result}
		Let $E_1, \dots, E_{3k-2}$ be non-empty edge subsets of a graph and suppose $\nu(E_i\cup E_j)\geq k$ for every $i\ne j$. Then $E = \bigcup E_i$ has a rainbow matching of size $k$.
	\end{thm}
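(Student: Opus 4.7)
My plan is to deduce Theorem~\ref{rainbow-matching-result} from Theorem~\ref{leray-result} via a colorful Helly-type argument applied to the non-matching complex. Let $G$ be the graph with edge set $E = \bigcup_{i=1}^{3k-2} E_i$. By Theorem~\ref{leray-result}, the complex $\mathsf{K} := \NM_k(G)$ is $(3k-3)$-Leray. Setting $d := 3k-3$, I view the $d+1 = 3k-2$ edge sets $E_i$ as pairwise disjoint color classes, and the hypothesis $\nu(E_i \cup E_j) \ge k$ is exactly the statement that every pairwise union $E_i \cup E_j$ is a non-face of $\mathsf{K}$.

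The key step is to produce a \emph{rainbow non-face}: edges $e_i \in E_i$ (for $i = 1, \dots, 3k-2$) with $\{e_1,\dots,e_{3k-2}\} \notin \mathsf{K}$. Once this is obtained, the set $\{e_1,\dots,e_{3k-2}\}$ has matching number at least $k$, so it contains a matching of size $k$; since each $e_i$ is drawn from a distinct $E_i$, this matching is rainbow, proving the theorem. The bipartite counterpart, Theorem~\ref{drisko-general}, follows by the same argument using the bipartite Leray bound with $d = 2k-2$ and $d+1 = 2k-1$ color classes.

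The main obstacle is producing the rainbow non-face from the pairwise non-face hypothesis. The classical Kalai--Meshulam topological colorful Helly theorem yields the rainbow non-face when each individual $E_i$ is itself a non-face of $\mathsf{K}$, i.e., $\nu(E_i) \ge k$ for every $i$; that case alone recovers the theorem of Aharoni--Berger--Chudnovsky--Howard--Seymour. To handle the general situation in which some $E_i$ may satisfy $\nu(E_i) < k$, I would combine Theorem~\ref{leray-result} with Theorem~\ref{link-result}: starting from a maximum partial rainbow matching $\sigma$ of size $r < k$, pass to $\lk_{\mathsf{K}}(\sigma)$, which by Theorem~\ref{link-result} has vanishing homology in all dimensions $\ge 3k-4$, while the unused color classes $E_j$ retain the pairwise non-face condition in this link because $\nu(E_j \cup E_l \cup \sigma) \ge \nu(E_j \cup E_l) \ge k$. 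A careful application of colorful Helly to the link, exploiting the maximality of $\sigma$ to rule out augmenting configurations, should then yield the rainbow non-face and close the argument; establishing the precise variant of colorful Helly tailored to the matching complex, so as to avoid the known failure of a naive pairwise-hypothesis version of Kalai--Meshulam, is the main technical input of this deduction.
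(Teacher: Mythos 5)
Your overall plan -- reduce to producing a rainbow non-face of the non-matching complex via a topological colorful-Helly-type argument -- is the right flavor, and you correctly diagnose the central difficulty: the hypothesis is only that the \emph{pairwise} unions $E_i\cup E_j$ are non-faces, so the Kalai--Meshulam colorful Helly theorem (which needs each individual $E_i$ to be a non-face) only recovers the Aharoni--Berger--Chudnovsky--Howard--Seymour special case. But at exactly this point your proposal stops being a proof. The paragraph beginning ``To handle the general situation\dots'' is a hope, not an argument: you never state or prove the ``precise variant of colorful Helly tailored to the matching complex,'' and the sketched induction on a maximum partial rainbow matching $\sigma$ has no mechanism for converting maximality of $\sigma$ plus vanishing homology of $\lk_{\mathsf{K}}(\sigma)$ into a rainbow non-face under a pairwise hypothesis. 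That missing variant is precisely the main technical input, and the paper supplies it as Theorem \ref{holmsen} (a matroid version of colorful Helly for \emph{near}-$d$-Leray complexes, from \cite{holmsen_colorful_caratheodory}): applied to the partition matroid on the (label-disjointified) edge sets, its conclusion is that some face of $\mathsf{K}$ contains two entire color classes $E_i\cup E_j$, which is exactly what the pairwise condition $\nu(E_i\cup E_j)\ge k$ forbids. No induction over partial rainbow matchings is needed once this tool is in hand.

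Two further concrete problems with your setup. First, you work with $d=3k-3$ coming from Theorem \ref{leray-result}, but the count $3k-2$ in the statement is driven by the \emph{near}-Leray bound $d=3k-4$ of Theorem \ref{link-result}: Theorem \ref{holmsen} needs rank at least $d+2$, and $d+2=3k-2$ only with $d=3k-4$; with the plain Leray number you would need $3k-1$ color classes in this framework. Second, the $E_i$ need not be disjoint, so one cannot simply ``view the edge sets as pairwise disjoint color classes''; the paper passes to labelled edges $\tilde{E}_i=\{(e,i):e\in E_i\}$, defines $\mathsf{K}=\{\sigma\subseteq\tilde{E}:\nu(\pi(\sigma))<k\}$, and then must verify that this labelled complex is still near-$(3k-4)$-Leray (links where some fibre $\pi^{-1}(e)$ is partially used collapse away by Lemma \ref{product} and Theorem \ref{morse_fundamental}; the remaining links are homotopy equivalent to links in $\NM_k(E)$). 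Your proposal skips this step entirely. So the deduction as written has a genuine gap, even though the intended high-level route matches the paper's.
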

	
	\begin{remark}
		While this manuscript was in preparation we learned that Theorems \ref{drisko-general} and \ref{rainbow-matching-result} have also been obtained in \cite{rainbow_matching_general_improvement,collaborative_bipartite}. However, their proof methods are combinatorial and differ from ours. 
	\end{remark}

	\subsection{Outline of paper}
	In section \ref{tools} we review several tools needed for the proof of Theorem \ref{link-result}. This involves discrete Morse theory and the Gallai--Edmonds decomposition theorem. 
	
	In section \ref{three special} we define three special families of graphs and state key results concerning acyclic matchings on these families with bounds on the sizes of the critical sets. The proofs of these results are given in sections \ref{section_size perfect m}, \ref{section_size-d0} and \ref{section_size-yz}.
	
	The proof of Theorem \ref{link-result} and the deduction of Theorem \ref{leray-result} is given in section \ref{proof of main}.
	The proofs of Theorems \ref{drisko-general} and \ref{rainbow-matching-result} will be given in section \ref{rainbow applications}, and we conclude with some remarks in section \ref{finals}.

	\subsection{Notation}
	Let $V$ and $W$ be disjoint sets of vertices. We denote the complete graph on $V$ by $K_V$, and the complete bipartite graph with vertex classes $V$ and $W$ by $K_{V,W}$. For a given graph $G$ on a vertex set containing $V\cup W$, let $G[V]$ be the induced subgraph of $G$ on $V$, and let $G[V,W]$ be the induced bipartite subgraph on vertex classes $V$ and $W$, that is,
	$$G[V,W]=\{e\in G: e\in K_{V,W} \}. $$
	When $V$ is empty, we set $K_V$ and $G[V]$ to be the empty graph $\emptyset$. Also, when $V$ or $W$ is empty, we set $K_{V,W}$ and $G[V,W]$ to be $\emptyset$.
	
	For a vertex $v$ of $G$, we use the standard notation $\deg_G(v)$ and $N_G(v)$
	to denote the degree of $v$ in $G$ and the neighborhood of $v$ in $G$, respectively. If $V$ is a subset of the vertex set of $G$, we let $N_G(V)$ denote the set of vertices not in $V$ which have at least one neighbor in $V$.
	For an edge $e$,  
	$G+e$ and $G-e$ denote the graph obtained by adding or deleting $e$ from $G$, respectively. Note that if $e \in G$, then $G+e = G$, and similarly, if $e\notin G$, then $G-e = G$. If $V_0$ is the vertex set of $G$ and $W\subset V_0$, then $G-W$ denotes the induced subgraph $G[V_0\setminus W]$.
	
	\section{Preliminaries} \label{tools}
	
	Here we give a brief outline of the main tools needed throughout the paper. We mainly follow the exposition and terminology from Jonsson's book \cite{jonsson_complex_graph}.
	
	\subsection{Discrete Morse theory} 
	Let $\mathsf{F}$ be a family of subsets of a finite ground set $E$. An {\em element matching} on $\mathsf{F}$ is a family $\mathcal{M}$ of ordered pairs $(\sigma, \tau)$ with $\sigma, \tau\in \mathsf{F}$ such that $\sigma\subsetneq \tau$, $|\tau\setminus\sigma|=1$, and any member of $\mathsf{F}$ is contained in at most one pair of $\mathcal{M}$. The sets in $\mathsf{F}$ that do not appear in any member of $\mathcal{M}$ are called {\em critical sets} (with respect to $\mathcal{M}$). If there are no critical sets, then $\mathcal{M}$ is called a {\em complete matching}. Whenever we speak of a matching on a family $\mathsf{F}$ we will always mean an element matching. (This should not be confused with a matching in a graph $G$ which means a set of pairwise disjoint edges.)
	
	Given an element matching $\mathcal{M}$ on $\mathsf{F}$, let $\mathcal{D} = \mathcal{D}(\mathsf{F}, \mathcal{M})$ denote the directed graph with vertex set $\mathsf{F}$ and directed edge from $\sigma$ to $\tau$ if and only if one of the following is satisfied:
	\begin{enumerate}
		\item $(\sigma, \tau)\in \mathcal{M}$
		\item $\tau\subsetneq \sigma$,  $|\sigma\setminus \tau|=1$, and $(\tau,\sigma)\notin \mathcal{M}$.
	\end{enumerate}
	
	In other words, the edges of $\mathcal{D}$ go between pairs of sets in $\mathsf{F}$ that differ by a single element of the ground set. Pairs that appear in $\mathcal{M}$ are directed from smaller to larger, while pairs that do not appear in $\mathcal{M}$ are directed from larger to smaller.  
	An element matching $\mathcal{M}$ is an {\em acyclic matching} if the directed graph $\mathcal{D}$ 
	is acyclic. Obviously, the empty matching is an acyclic matching.

	
	
	\medskip
	
	The relevant result for us is the `weak Morse inequalites' in the context of discrete Morse theory developed by Forman \cite{forman_discrete_morse_origin}. The following statement is taken from \cite{forman_lecture} (see Theorem 13 there), where it is stated in terms of discrete gradient vector fields which is a geometric name for acyclic matchings.
	
	\begin{thm} \label{morse_fundamental}
		Let $E$ be a finite set and $\mathsf{K}\subseteq 2^E$ be a simplicial complex. And let $H_i(\mathsf{K})$ be the homology of $\mathsf{K}$ with coefficients in a fixed field $\mathbb{F}$. Suppose that there is an acyclic matching $\MM$ on $\mathsf{K} \setminus \{\emptyset \}$. Then for every $i\geq 0$,
		$\dim H_i(\mathsf{K})$ is at most the number of critical sets with respect to $\MM$ of dimension $i$.
	\end{thm}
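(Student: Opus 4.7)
The plan is to prove the theorem via algebraic discrete Morse theory: build a smaller chain complex, the \emph{Morse complex}, whose chain groups have ranks equal to the number of critical simplices in each dimension and whose homology coincides with that of $\mathsf{K}$. Since $\dim H_i$ is always bounded by the rank of the $i$-th chain group, the weak inequalities will follow at once.

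Concretely, I would work with the simplicial chain complex $(C_*(\mathsf{K};\mathbb{F}),\partial)$, whose preferred basis is the set of simplices, and partition this basis according to $\MM$ into three classes: critical simplices, the \emph{bottom} $\sigma$ of each pair $(\sigma,\tau)\in \MM$, and the \emph{top} $\tau$. Since $\sigma$ is a codimension-one face of $\tau$, the boundary $\partial\tau$ contains $\pm\sigma$ as a summand, so one may eliminate $\sigma$ from the basis by substituting $\partial\tau$ for it. Performing this substitution for every matched pair, processed in an order compatible with a topological sort of $\DD$, produces a direct sum decomposition $C_*(\mathsf{K}) = C^M_* \oplus A_*$ in which $C^M_i$ is the $\mathbb{F}$-span of the critical $i$-simplices and $A_*$ is a subcomplex spanned by the rewritten matched pairs. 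Once this decomposition is in hand, acyclicity of $A_*$ gives $H_*(\mathsf{K}) \cong H_*(C^M_*)$ and hence
\[
\dim H_i(\mathsf{K}) \;=\; \dim H_i(C^M_*) \;\leq\; \dim_{\mathbb{F}} C^M_i \;=\; c_i,
\]
where $c_i$ counts the critical $i$-simplices.

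The main obstacle is showing that the iterated change of basis is consistent and that the complement $A_*$ is genuinely acyclic as a chain complex. This is precisely where acyclicity of $\MM$ is essential: the directed graph $\DD$ admits a topological ordering if and only if it has no directed cycles, and the same absence of cycles ensures that every \emph{gradient path} --- a maximal alternating walk in $\DD$ going up along $\MM$ and down along unmatched covers --- terminates at a critical simplex in finitely many steps. A clean way to implement this is the homological perturbation lemma applied to the splitting $C_* = C^M_* \oplus A_*$ together with the partial boundary induced by $\MM$: the perturbation series is a finite sum by acyclicity, and the resulting Morse differential is the familiar signed count of gradient paths between critical simplices of consecutive dimensions. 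Assembling these pieces yields the stated bound.
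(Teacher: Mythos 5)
The paper never proves Theorem \ref{morse_fundamental}; it is quoted from Forman's lecture notes, where the argument is topological: the acyclic matching is viewed as a discrete gradient vector field, $\mathsf{K}$ is shown to be homotopy equivalent to a CW complex with one cell of dimension $i$ per critical set of dimension $i$, and the bound $\dim H_i\leq c_i$ is then the classical weak Morse inequality for CW complexes. Your route is the chain-level (algebraic discrete Morse theory) version: eliminate matched pairs by changes of basis in the simplicial chain complex and retract onto a Morse complex spanned by the critical cells. That is a genuinely different, legitimate proof; it avoids homotopy theory and works over any coefficient ring since each incidence coefficient of a matched pair is $\pm 1$, at the cost of the bookkeeping you point to. Two points need care: you must use the unreduced chain complex (no generator for $\emptyset$), consistent with the matching living on $\mathsf{K}\setminus\{\emptyset\}$ and the statement being about unreduced homology, which your setup does implicitly; and the claim that the substitutions are consistent and yield $C_*=C^M_*\oplus A_*$ with $A_*$ acyclic is precisely the main theorem of algebraic Morse theory, requiring either the finite perturbation argument you cite (finiteness of gradient paths is exactly where acyclicity of $\MM$ enters) or an induction over matched pairs in an order compatible with a linear extension of $\DD$, checking the relevant coefficient stays a unit. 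As an outline this is correct, with that key lemma asserted rather than proved --- on par with the paper, which simply cites Forman.
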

	
	Suppose there is an acyclic matching $\MM$ on a simplicial complex $\mathsf{K}$, and let $\MM'$ be the induced element matching on $\mathsf{K}\setminus \{\emptyset \}$. Clearly, $\MM'$ is also acyclic since $\DD(\mathsf{K}\setminus \{\emptyset \},\MM')$ is a directed subgraph of $\DD(\mathsf{K}, \MM)$. Therefore, we can apply Theorem \ref{morse_fundamental}. Especially we will be interested in the case when $i\geq 1$, where $\tilde{H}_i(\mathsf{K})=H_i(\mathsf{K})$, and the number of critical sets with respect to $\MM$ of dimension $i$ is same as the number of critical sets with respect to $\MM'$ of dimension $i$.
	
	\medskip
	
	In order to apply Theorem \ref{morse_fundamental}, we need an efficient way to show that a given element matching is acyclic. The following simple lemma gives such a criterion (See \cite[section 4.2]{jonsson_complex_graph}).
	
	\begin{lemma}[Cycle lemma]\label{cycle} Consider a family $\mathsf{F} \subseteq 2^E$ with an element matching $\mathcal{M}$. Then every directed cycle in $\mathcal{D}(\mathsf{F}, \mathcal{M})$ is of the form \[(\sigma_0, \tau_0, \sigma_1, \tau_1, \dots, \sigma_{t-1}, \tau_{t-1})\] where $t\geq 3$,  $\sigma_i, \sigma_{i+1} \subsetneq \tau_i$, $|\sigma_i|+1 = |\tau_j|$, and $(\sigma_i, \tau_i)\in \mathcal{M}$. (Indices are taken modulo $t$.)
	\end{lemma}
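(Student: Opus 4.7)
The plan is to classify the edges of $\mathcal{D}(\mathsf{F},\mathcal{M})$ according to whether they arise from clause (1) or clause (2) of its definition, and then to use the element-matching constraint to force strict alternation around any directed cycle. Call an edge $\sigma\to\tau$ an \emph{up-edge} when $(\sigma,\tau)\in\mathcal{M}$ (so $|\tau|=|\sigma|+1$) and a \emph{down-edge} otherwise (so $|\tau|=|\sigma|-1$). The crucial local observation is that two consecutive edges $u\to v\to w$ of $\mathcal{D}$ cannot both be up-edges: that would require $(u,v)$ and $(v,w)$ both to lie in $\mathcal{M}$, violating the rule that each member of $\mathsf{F}$ belongs to at most one pair of $\mathcal{M}$.

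Next I would combine this local obstruction with a counting argument. Around a directed cycle the cardinality returns to its starting value, so the numbers of up- and down-edges in the cycle are equal; call this common value $t$. Encoding the cycle as a cyclic word of length $2t$ in the letters $U$ and $D$, the ``no consecutive $U$s'' property forces each of the $t$ cyclic gaps between successive $U$s to contain at least one $D$; since there are only $t$ $D$s in total, each gap must contain exactly one, and so up- and down-edges strictly alternate around the cycle. Labelling the $2t$ vertices as $\sigma_0,\tau_0,\ldots,\sigma_{t-1},\tau_{t-1}$ in such a way that each $\sigma_i\to\tau_i$ is an up-edge and each $\tau_i\to\sigma_{i+1}$ (indices modulo $t$) is a down-edge, the conditions $(\sigma_i,\tau_i)\in\mathcal{M}$, $|\sigma_i|+1=|\tau_i|$, and $\sigma_i,\sigma_{i+1}\subsetneq\tau_i$ fall out immediately from the definitions of the two edge types.

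To finish, I would rule out $t\le 2$. The value $t=1$ is immediate: a down-edge $\tau_0\to\sigma_0$ requires $(\sigma_0,\tau_0)\notin\mathcal{M}$, contradicting $(\sigma_0,\tau_0)\in\mathcal{M}$. For $t=2$, the two distinct codimension-one subsets $\sigma_0,\sigma_1$ of $\tau_0$ satisfy $\sigma_0\cup\sigma_1=\tau_0$; applying the same identity inside $\tau_1$ yields $\tau_1=\sigma_0\cup\sigma_1=\tau_0$, contradicting simplicity of the cycle. I do not foresee any real obstacle in this argument; the only subtle point is recognising that the ``at most one pair'' clause in the definition of an element matching is precisely what excludes the $UU$ patterns and thereby equalises the sizes of the $\sigma_i$ (and of the $\tau_i$).
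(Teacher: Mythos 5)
Your proof is correct. Note that the paper does not prove this lemma at all --- it is stated with a reference to Jonsson's book (Section 4.2) --- and your argument (classifying edges as up-edges from $\mathcal{M}$ and down-edges otherwise, using the ``at most one pair'' condition to forbid consecutive up-edges, counting to force strict alternation since the cardinality returns to its starting value, and the union identity $\sigma_0\cup\sigma_1=\tau_0=\tau_1$ to rule out $t=2$) is exactly the standard proof of this standard fact, so nothing further is needed.
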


	Here is a simple tool for producing an acyclic matching. (See \cite[Lemma 4.1]{jonsson_complex_graph}.)

	\begin{lemma}\label{boolean}
		Consider a family $\mathsf{F}\subseteq 2^E$ and an element $e_0\in E$. Define 
		\[\begin{array}{rcl}
		\mathsf{F_0} & = & \{\sigma : \sigma-e_0, \sigma+ e_0 \in \mathsf{F}\},\\
		\mathsf{F}_1 & = & \mathsf{F}\setminus \mathsf{F}_0.
		\end{array}\]
		There is a complete acyclic matching $\mathcal{M}_0$ on $\mathsf{F}_0$, and for any acyclic matching $\mathcal{M}_1$ on $\mathsf{F}_1$ the union $\mathcal{M} = \mathcal{M}_0 \cup \mathcal{M}_1$ is an acyclic matching on $\mathsf{F}$. Consequently, the critical sets with respect to $\mathcal{M}$ are precisely the critical sets with respect to $\mathcal{M}_1$.
	\end{lemma}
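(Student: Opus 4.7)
The plan is to define $\mathcal{M}_0$ as the natural pairing on $\mathsf{F}_0$ obtained by toggling $e_0$:
\[
\mathcal{M}_0 = \{(\sigma,\, \sigma\cup\{e_0\}) : \sigma\in \mathsf{F}_0,\ e_0\notin \sigma\}.
\]
I first check that $\mathcal{M}_0$ is a complete matching on $\mathsf{F}_0$. Indeed, if $\sigma\in \mathsf{F}_0$ then both $\sigma-e_0$ and $\sigma+e_0$ lie in $\mathsf{F}$, and an easy verification shows that the ``toggle'' $\sigma\mapsto \sigma\triangle\{e_0\}$ preserves the defining condition of $\mathsf{F}_0$; hence every element of $\mathsf{F}_0$ appears in exactly one pair of $\mathcal{M}_0$.

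The heart of the argument is a monotonicity observation about $e_0$ in the digraph $\mathcal{D}(\mathsf{F}, \mathcal{M})$, where $\mathcal{M} = \mathcal{M}_0 \cup \mathcal{M}_1$. I claim (a) that $e_0$ is never removed along a downward edge, and (b) that $e_0$ is added only by upward edges belonging to $\mathcal{M}_0$. For (a), consider a candidate downward edge $\tau\to \tau - e_0$ with $e_0\in \tau$. If $\tau\in \mathsf{F}_0$ then $\tau-e_0\in \mathsf{F}_0$ and the pair $(\tau-e_0,\tau)$ already lies in $\mathcal{M}_0$, so this arrow is matched upward, not unmatched downward. If instead $\tau\in \mathsf{F}_1$, then since $\tau + e_0 = \tau \in \mathsf{F}$, the definition of $\mathsf{F}_1$ forces $\tau-e_0\notin \mathsf{F}$, so $\tau-e_0$ is not even a vertex of $\mathcal{D}$. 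For (b), any pair $(\sigma,\tau)\in\mathcal{M}_1$ with $\tau=\sigma\cup\{e_0\}$ would place $\sigma$ in $\mathsf{F}_0$, contradicting $\sigma\in \mathsf{F}_1$.

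Given (a) and (b), acyclicity of $\mathcal{M}$ follows quickly. Suppose a directed cycle exists, and assign to each $\sigma\in \mathsf{F}$ the value $f(\sigma) = 1$ if $e_0\in \sigma$ and $f(\sigma) = 0$ otherwise. Along any edge of the cycle, $f$ is non-decreasing, and it strictly increases precisely on upward edges of $\mathcal{M}_0$. Since the cycle returns to its starting vertex, $f$ must be constant along it, so the cycle contains no $\mathcal{M}_0$-edges. Hence every matched pair in the cycle lies in $\mathcal{M}_1$, forcing every $\sigma_i,\tau_i$ of the cycle into $\mathsf{F}_1$; the unmatched downward edges of $\mathcal{M}$ between vertices of $\mathsf{F}_1$ are exactly the unmatched downward edges of $\mathcal{M}_1$, so the cycle is a directed cycle in $\mathcal{D}(\mathsf{F}_1,\mathcal{M}_1)$, contradicting the acyclicity of $\mathcal{M}_1$. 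The final ``consequently'' clause is immediate: because $\mathcal{M}_0$ leaves no critical set in $\mathsf{F}_0$, the critical sets of $\mathcal{M}$ coincide with those of $\mathcal{M}_1$.

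I expect the main obstacle to be bookkeeping rather than insight: one must carefully enumerate the four types of arrows (upward versus downward, in $\mathcal{M}_0$ versus in $\mathcal{M}_1$) to confirm that $f$ really does behave monotonically along the cycle, and that no arrow incident to a vertex of $\mathsf{F}_1$ can ``leak'' into $\mathsf{F}_0$ via removing $e_0$. Once this combinatorial check is in place, the Cycle Lemma is not even needed—the indicator $f$ directly obstructs any closed walk.
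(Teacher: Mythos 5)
Your proof is correct: the paper does not prove Lemma \ref{boolean} itself (it cites Jonsson's Lemma 4.1), and your toggle-$e_0$ matching together with the monotone indicator of $e_0$ along the digraph edges is exactly the standard argument behind that citation. One quibble about your closing remark: to pass from ``the cycle contains no $\mathcal{M}_0$-edges'' to ``all its vertices lie in $\mathsf{F}_1$'' you do use the Cycle Lemma's alternating form (every vertex of the cycle belongs to a matched pair of the cycle, so a vertex of $\mathsf{F}_0$ traversed by two consecutive downward edges is excluded), hence the Cycle Lemma is not quite as dispensable as you claim.
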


	By ordering the members of $\mathsf{F} \subseteq 2^E$ by inclusion we may view it as a poset. 
	The following is another useful tool for finding an acyclic matching. (See \cite[Lemma 4.2]{jonsson_complex_graph}.)
	
	\begin{lemma}[Cluster lemma]\label{cluster}
		Let $\mathsf{F}\subseteq 2^E$ and let $\varphi: \mathsf{F}\to Q$ be a monotone poset map where $Q$ is an arbitrary poset. For $q\in Q$, let $\MM_q$ be an acyclic matching on $\varphi^{-1}(q)$.
		Then $\MM=\bigcup_{q\in Q}\MM_q$ is an acyclic matching on $\mathsf{F}$.
	\end{lemma}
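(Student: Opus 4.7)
The plan is first to verify that $\MM$ is a well-defined element matching on $\mathsf{F}$, and then to use the Cycle Lemma together with the monotonicity of $\varphi$ to rule out directed cycles in $\DD(\mathsf{F},\MM)$. The well-definedness is essentially bookkeeping: since the fibers $\varphi^{-1}(q)$ partition $\mathsf{F}$ as $q$ ranges over $Q$, and since each $\MM_q$ is an element matching on its own fiber, no set in $\mathsf{F}$ can appear in two different pairs of $\MM$. Each pair in $\MM$ also inherits the covering-pair condition from the corresponding $\MM_q$, so $\MM$ is indeed an element matching on $\mathsf{F}$.

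For acyclicity, the key observation is that $\varphi$ is non-increasing along any directed edge of $\DD(\mathsf{F},\MM)$. Indeed, a matched edge $(\sigma,\tau)\in\MM$ lies in some $\MM_q$, so $\varphi(\sigma)=\varphi(\tau)=q$; an unmatched directed edge goes from some $\sigma$ to a proper subset $\tau\subsetneq\sigma$, whence $\varphi(\tau)\le\varphi(\sigma)$ by monotonicity of $\varphi$.

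Now suppose for contradiction that $\DD(\mathsf{F},\MM)$ contains a directed cycle. By the Cycle Lemma (Lemma~\ref{cycle}) this cycle has the form $(\sigma_0,\tau_0,\sigma_1,\tau_1,\dots,\sigma_{t-1},\tau_{t-1})$ with $(\sigma_i,\tau_i)\in\MM$ and $\sigma_{i+1}\subsetneq\tau_i$ (indices modulo $t$). Applying the non-increasing property above around the cycle gives
\[
\varphi(\sigma_0)=\varphi(\tau_0)\ge\varphi(\sigma_1)=\varphi(\tau_1)\ge\cdots\ge\varphi(\sigma_0),
\]
so the inequalities collapse to equalities and there is a single $q\in Q$ with $\sigma_i,\tau_i\in\varphi^{-1}(q)$ for all $i$. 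Consequently the entire cycle lives in $\DD(\varphi^{-1}(q),\MM_q)$, contradicting the assumed acyclicity of $\MM_q$.

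The step I expect to require the most care is the monotonicity argument: one has to check that both types of arrows in $\DD(\mathsf{F},\MM)$ respect the partial order on $Q$, and in particular one must use that matched pairs live in a common fiber (which is where the hypothesis that $\MM_q$ is a matching on the \emph{fiber} $\varphi^{-1}(q)$, rather than on some arbitrary subset, really enters). Once that is in place, the Cycle Lemma reduces the global acyclicity of $\MM$ to the local acyclicity of each $\MM_q$, which is given.
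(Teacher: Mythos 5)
Your proof is correct: the fibers give a well-defined element matching, monotonicity of $\varphi$ makes its value non-increasing along every directed edge of $\DD(\mathsf{F},\MM)$, and antisymmetry forces any directed cycle (in the form guaranteed by Lemma~\ref{cycle}) into a single fiber, where it contradicts the acyclicity of $\MM_q$. The paper does not prove this lemma itself but cites \cite[Lemma 4.2]{jonsson_complex_graph}, and your argument is essentially the standard proof given there.
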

	
	Here we give two more tools for constructing acyclic matchings. The first one we call the {\em join construction}. Suppose we have a partition of the ground set $E = E_1\cup \cdots \cup E_m$. Given a family $\mathsf{F}_i\subseteq 2^{E_i}$ for every $i$, the join $\mathsf{F}_1 * \cdots * \mathsf{F}_m$ is the subfamily of $2^E$ defined as
	\[\mathsf{F}_1* \cdots * \mathsf{F}_m = \{\sigma_1 \cup \cdots \cup \sigma_m : \sigma_i\in \mathsf{F}_i\}.\]
	It is important to note that the family $2^E$ by definition contains $2^{|E|}$ distinct subsets, one of which is the empty set $\emptyset$, and that the family $\{\emptyset\} \subseteq 2^E$ should be distinguished from the {\em empty family} $2^E\setminus 2^E$. Suppose $\mathsf{F}_1, \dots, \mathsf{F}_m$ are subfamilies as above. If one of the $\mathsf{F}_i$ is the empty family, then we define the $\mathsf{F}_1* \cdots * \mathsf{F}_m$ to be the empty family.
	
	\medskip

	The following lemma is well-known, but for completeness we include a proof. 
	
	\begin{lemma}[Join Lemma]\label{product}
		Let $E$ be a finite set with partition $E=E_1\cup \cdots \cup E_m$ and for every $i$, let $\mathsf{F}_i\subseteq 2^{E_i}$ be a non-empty subfamily. Suppose $\MM_i$ is an acyclic matching on $\mathsf{F}_i$ with collection of critical sets $\mathsf{U}_i \subseteq \mathsf{F}_i$.
		Then
		there exists an acyclic matching 
		on the join $\mathsf{F}_1 * \cdots * \mathsf{F}_m$ with collection of critical sets $\mathsf{U}_1 * \cdots * \mathsf{U}_m$. In particular, if one of the $\MM_i$ is complete, then $\mathsf{F}$ has a complete acyclic matching.
	\end{lemma}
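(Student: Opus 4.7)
The plan is to induct on $m$, reducing to the $m=2$ case via associativity of the join $\mathsf{F}_1 * \cdots * \mathsf{F}_m = (\mathsf{F}_1 * \cdots * \mathsf{F}_{m-1}) * \mathsf{F}_m$. So I focus on $m=2$. I would define an element matching $\mathcal{M}$ on $\mathsf{F}_1 * \mathsf{F}_2$ by pairing $(\alpha, \beta)$, with $\alpha = \alpha_1 \cup \alpha_2$ and $\beta = \beta_1 \cup \beta_2$, whenever either (a) $\alpha_2 = \beta_2$ and $(\alpha_1, \beta_1) \in \mathcal{M}_1$, or (b) $\alpha_1 = \beta_1 \in \mathsf{U}_1$ and $(\alpha_2, \beta_2) \in \mathcal{M}_2$. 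A direct check confirms this is a valid element matching and that $\sigma = \sigma_1 \cup \sigma_2$ is critical if and only if $\sigma_i \in \mathsf{U}_i$ for both $i$, so the critical set is exactly $\mathsf{U}_1 * \mathsf{U}_2$.

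For acyclicity, I would fix a linear extension $<_i$ of the reachability preorder on $\mathcal{D}(\mathsf{F}_i, \mathcal{M}_i)$ (such a linear extension exists since $\mathcal{M}_i$ is acyclic) and consider the lexicographic order on $\mathsf{F}_1 * \mathsf{F}_2$ given by $(\sigma_1, \sigma_2) \prec (\tau_1, \tau_2)$ iff $\sigma_1 <_1 \tau_1$, or $\sigma_1 = \tau_1$ and $\sigma_2 <_2 \tau_2$. It then suffices to verify that every directed edge of $\mathcal{D}(\mathsf{F}_1 * \mathsf{F}_2, \mathcal{M})$ goes from a $\prec$-smaller to a $\prec$-larger element. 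The matching edges of type (a) or (b) and the non-matching edges deleting an element of $E_1$ are handled immediately by the defining properties of $<_1$ and $<_2$. The \emph{in particular} assertion is then immediate: if some $\mathcal{M}_i$ is complete then $\mathsf{U}_i$ is empty, hence $\mathsf{U}_1 * \cdots * \mathsf{U}_m$ is the empty family.

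The main obstacle is the remaining case: a non-matching edge $\tau \to \sigma$ in $\mathcal{D}(\mathsf{F}_1 * \mathsf{F}_2, \mathcal{M})$ deleting an element of $E_2$. Here $\sigma_1 = \tau_1$ and $\sigma_2 \subsetneq \tau_2$, and I must show $\sigma_2 <_2 \tau_2$. The pair $(\sigma, \tau)$ can only belong to $\mathcal{M}$ via clause (b), which requires both $\sigma_1 \in \mathsf{U}_1$ and $(\sigma_2, \tau_2) \in \mathcal{M}_2$; so the non-matching hypothesis forces at least one of these to fail. If $(\sigma_2, \tau_2) \in \mathcal{M}_2$ then $\sigma_2 \to \tau_2$ is an edge of $\mathcal{D}(\mathsf{F}_2, \mathcal{M}_2)$, while if $(\sigma_2, \tau_2) \notin \mathcal{M}_2$ then $\tau_2 \to \sigma_2$ is an edge of $\mathcal{D}(\mathsf{F}_2, \mathcal{M}_2)$; in either situation the topological ordering $<_2$ yields $\sigma_2 <_2 \tau_2$. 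This produces a topological ordering of $\mathcal{D}(\mathsf{F}_1 * \mathsf{F}_2, \mathcal{M})$ and thus establishes acyclicity.
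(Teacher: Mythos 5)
Your construction is sound as far as it goes: the reduction to $m=2$ via associativity is fine, and for two factors your matching (clauses (a) and (b)) is exactly the paper's $\mathcal{N}_1\cup\mathcal{N}_2$, with the critical sets correctly identified as $\mathsf{U}_1*\mathsf{U}_2$. The gap is in the acyclicity step: the lexicographic order $\prec$ is simply not a topological order of $\mathcal{D}(\mathsf{F}_1*\mathsf{F}_2,\mathcal{M})$, and your handling of the ``main obstacle'' case is where this surfaces. For a non-matching edge $\tau\to\sigma$ deleting an element of $E_2$, monotonicity requires $\tau\prec\sigma$, i.e.\ $\tau_2<_2\sigma_2$ (not $\sigma_2<_2\tau_2$ as you state). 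Your two subcases give opposite comparisons, so they cannot both ``yield $\sigma_2<_2\tau_2$'': if $(\sigma_2,\tau_2)\notin\mathcal{M}_2$ then indeed $\tau_2<_2\sigma_2$ and the edge is fine, but if $(\sigma_2,\tau_2)\in\mathcal{M}_2$ (which happens exactly when $\sigma_1=\tau_1\notin\mathsf{U}_1$, so the pair is unmatched in $\mathcal{M}$ even though its $E_2$-parts are matched in $\mathcal{M}_2$) then $\sigma_2<_2\tau_2$, so the edge $\tau\to\sigma$ strictly \emph{decreases} $\prec$. A minimal instance: $E_1=\{a\}$, $E_2=\{b\}$, $\mathsf{F}_i=2^{E_i}$, $\mathcal{M}_1=\{(\emptyset,\{a\})\}$, $\mathcal{M}_2=\{(\emptyset,\{b\})\}$; then $\mathcal{M}=\{(\emptyset,\{a\}),(\{b\},\{a,b\})\}$ and $\{a,b\}\to\{a\}$ is a directed edge of $\mathcal{D}(\mathsf{F},\mathcal{M})$ although $\{a\}\prec\{a,b\}$. (There is no cycle here, but it shows your certificate fails, so acyclicity is not established by your argument.)

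The lemma is of course true; the paper proves acyclicity by the Cycle Lemma: in a hypothetical cycle all matched pairs must lie in a single $\mathcal{N}_i$, so the cycle only adds and removes elements of $E_i$ and projects to a directed cycle in $\mathcal{D}(\mathsf{F}_i,\mathcal{M}_i)$, a contradiction. If you want to stay close to your order-theoretic setup, a correct patch is: along every edge of $\mathcal{D}(\mathsf{F}_1*\mathsf{F}_2,\mathcal{M})$ the first component is unchanged or strictly $<_1$-increasing (the latter whenever the edge changes the $E_1$-part), so any directed cycle is confined to a fiber $\{\sigma_1\}*\mathsf{F}_2$; if $\sigma_1\in\mathsf{U}_1$ that fiber's digraph is a copy of $\mathcal{D}(\mathsf{F}_2,\mathcal{M}_2)$, which is acyclic, and if $\sigma_1\notin\mathsf{U}_1$ the fiber contains no matched pairs, so all its edges are cardinality-decreasing and no cycle exists. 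The single global lexicographic order cannot be used wholesale, precisely because of the edges exhibited above.
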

	
	\begin{proof}
		After relabeling the parts of the partition, if necessary, we may assume $|\mathsf{U}_1| \leq \cdots \leq |\mathsf{U}_k|$. For each $i\in [m]$ define an element matching
		\[\mathcal{N}_i = \{(\alpha \cup \sigma \cup \beta, \alpha\cup \tau \cup \beta)\}, \]
		where $\alpha \in \mathsf{U}_1 * \cdots * \mathsf{U}_{i-1}$, $(\sigma, \tau)\in \mathcal{M}_i$, and  $\beta \in \mathsf{F}_{i+1} * \dots * \mathsf{F}_m$. 
		In other words, a member of the element matching $\mathcal{N}_i$ arises from a matching in the $i$th component, joined with critical sets from the first $i-1$ components and arbitrary sets from components $i+1, \dots, m$.
		If we set $\mathcal{M} = \bigcup_{i=1}^m \mathcal{N}_i$, then it is clear that $\mathcal{M}$ is an element matching on $\mathsf{F} = \mathsf{F}_1 * \cdots * \mathsf{F}_m$ where $\mathsf{U}_1 * \cdots * \mathsf{U}_m$ is the collection of critical sets. Note that if $\mathcal{M}_1$ is a complete matching, then $\mathsf{U}_1$ is the empty family and therefore  $\mathcal{M}$ is a complete matching. It remains to show that $\mathcal{M}$ is acyclic. 
		
		For contradiction, suppose there is a directed cycle 
		\[(\sigma_0, \tau_0, \dots, \sigma_{t-1}, \tau_{t-1})\]
		satisfying Lemma \ref{cycle}. The directed edge $(\sigma_0,\tau_0)$ belongs to some $\mathcal{N}_i$, and therefore $\sigma_0$ and $\tau_0$ are critical in the first $i-1$ components. The set $\sigma_1$ is a subset of $\tau_0$ and is obtained by removing a single element $x$ from $\tau_0$. The element $x$ cannot be removed from a set in the first $i-1$ components of $\tau_0$, since then there would be no way to return to $\sigma_0$ via a matching among the first $i-1$ components. Thus $\sigma_1$ is also critical in the first $i-1$ components, and so are the other $\sigma_j$ for the same reason. And for the same reason again, none of the $\sigma_j$ (with $j\geq 1$) can be critical in its first $i$ or more components, since it would not be possible to return to $\sigma_0$. It follows that all the matchings $(\sigma_j,\tau_j)$ belong to $\mathcal{N}_i$, but this would imply that we only add and remove elements in $E_i$ while we traverse the directed cycle. Therefore, we have a directed cycle in $\DD(\mathsf{F}_i,\mathcal{M}_i)$ which is a contradiction. Thus $\mathcal{M}$ is an acyclic matching.
	\end{proof}
	
	The final tool we call the {\em projection construction}. Suppose we are given a partition of the ground set $E = \bigcup_{i\in I} E_i$. (In other words, the parts of the partition are indexed by the elements of $I$). We define a map
	\[\begin{array}{cccl}
	\pi:& 2^E  & \to & 2^{I} \\
	& \sigma & \mapsto & \{i \in I : \sigma \cap E_{i} \neq \emptyset\},
	\end{array}\]
	which we call the {\em projection map} corresponding to the partition $E = \bigcup_{i\in I}E_i$.

	\begin{lemma}[Projection Lemma]\label{iden}
		Let $E$ be a finite set with partition $E=\bigcup_{i\in I}E_i$ and let $\pi:2^E \to 2^I$ be the corresponding projection map. 
		Given a set $\tau\subseteq E$ and a family $\mathsf{Q}\subseteq 2^I$, define the family $\mathsf{F} = \{\sigma\subseteq E : \pi(\sigma)\in \mathsf{Q}, \tau\subseteq \sigma\}$. Then the following are true:
		\begin{enumerate}
			\item \label{proj lemma easy} $\pi(\mathsf{F})=\{\overline{\sigma}\in \mathsf{Q} : \pi(\tau)\subseteq \overline{\sigma}\}$. 
			\item \label{proj lemma main} Suppose $\pi(\mathsf{F})$ has an acyclic matching $\MM_{\pi(\mathsf{F})}$ with collection of critical sets $\mathsf{U}_{\pi(\mathsf{F})}$. Then 
			there exists an acyclic matching on $\mathsf{F}$ 
			with collection of critical sets $\mathsf{U}_F$, such that the restriction $\pi: \mathsf{U}_F \to \mathsf{U}_{\pi(F)}$ is an injection where $|\sigma|=|\pi(\sigma)|-|\pi(\tau)|+|\tau|$ for every $\sigma \in U_F$.
		\end{enumerate}
	\end{lemma}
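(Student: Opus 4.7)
Part (1) will be a direct set-theoretic check: any $\sigma \in \mathsf{F}$ satisfies $\pi(\sigma) \in \mathsf{Q}$ and $\pi(\tau) \subseteq \pi(\sigma)$ because $\tau \subseteq \sigma$, while for the reverse, given $\overline{\sigma} \in \mathsf{Q}$ with $\pi(\tau) \subseteq \overline{\sigma}$, the explicit lift $\sigma = \tau \cup \{e_i : i \in \overline{\sigma} \setminus \pi(\tau)\}$ (for any choice of $e_i \in E_i$) realizes $\overline{\sigma}$ as $\pi(\sigma) \in \pi(\mathsf{F})$.

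For part (2), the plan is as follows. Fix a representative $e_i \in E_i$ for each $i \in I \setminus \pi(\tau)$ and, for every $\overline{\sigma} \in \pi(\mathsf{F})$, define the canonical lift $\sigma^\ast = \tau \cup \{e_i : i \in \overline{\sigma} \setminus \pi(\tau)\}$, which satisfies $|\sigma^\ast| = |\tau| + |\overline{\sigma}| - |\pi(\tau)|$, the required size formula. The matching $\MM$ is built in three parts. (I) For each $(\overline{\alpha}, \overline{\beta}) \in \MM_{\pi(\mathsf{F})}$ with $\overline{\beta} = \overline{\alpha} \cup \{j\}$, a \emph{cross-fiber} matching pairs every $\sigma \in \pi^{-1}(\overline{\alpha}) \cap \mathsf{F}$ with $\sigma \cup \{e_j\}$. (II) On the subfamily $\mathsf{G}_\beta = \{\tau' \in \pi^{-1}(\overline{\beta}) \cap \mathsf{F} : \tau' \cap E_j \neq \{e_j\}\}$, Lemma \ref{boolean} with $e_j$ gives a complete matching that covers exactly the elements of fiber $\overline{\beta}$ not paired in (I). (III) For each critical $\overline{\sigma} \in \mathsf{U}_{\pi(\mathsf{F})}$, decomposing the fiber as the join of the slab factors $\mathsf{H}_i = \{S : \tau \cap E_i \subseteq S \subseteq E_i\}$ (for $i \in \pi(\tau)$) and $\mathsf{K}_i = 2^{E_i} \setminus \{\emptyset\}$ (for $i \in \overline{\sigma} \setminus \pi(\tau)$)---each handled by Lemma \ref{boolean}---and applying the Join Lemma produces a fiber matching whose critical set is $\{\sigma^\ast\}$ (or empty, if some $\mathsf{H}_i$ has more than one element). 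A routine check then confirms that every element of $\mathsf{F}$ is matched by exactly one rule except possibly the $\sigma^\ast$ coming from $\mathsf{U}_{\pi(\mathsf{F})}$, so $\mathsf{U}_F \subseteq \{\sigma^\ast : \overline{\sigma} \in \mathsf{U}_{\pi(\mathsf{F})}\}$ and $\pi|_{\mathsf{U}_F}$ is an injection with the claimed size formula.

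The main obstacle will be showing $\MM$ is acyclic. The plan is to combine a linear extension $<'$ of the acyclic digraph $\DD(\pi(\mathsf{F}), \MM_{\pi(\mathsf{F})})$ with linear extensions of the within-fiber DAGs (themselves acyclic by Lemma \ref{boolean} and the Join Lemma) into a total order $<$ on $\mathsf{F}$, refined primarily by $<'$ on $\pi(\sigma)$ and secondarily by the within-fiber order. Forward matches and same-fiber (Type A) backward edges are $<$-compatible by construction, and so are cross-fiber (Type B) backward edges $(\tau, \sigma)$ whenever $(\pi(\sigma), \pi(\tau)) \notin \MM_{\pi(\mathsf{F})}$. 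The seemingly problematic case is a Type B backward edge $(\tau, \sigma)$ with $(\pi(\sigma), \pi(\tau)) \in \MM_{\pi(\mathsf{F})}$: here the single removed element $e \in E_j$ must differ from $e_j$ (otherwise the edge would coincide with a cross-match pair), which forces $\tau \cap E_j = \{e\} \neq \{e_j\}$ and $e_j \notin \tau$. Hence $\tau \in \mathsf{G}_\beta$, so step (II) matches $\tau$ as the \emph{smaller} element of $(\tau, \tau \cup \{e_j\})$. A case analysis confirms this is the only pair of $\MM$ containing $\tau$, so $\tau$ cannot serve as the larger element of any pair and therefore no such backward edge can appear as a $\tau_i \to \sigma_{i+1}$ step in a cycle of $\DD(\mathsf{F}, \MM)$. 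Every edge on any cycle is thus $<$-compatible, forcing a strict $<$-increase around the cycle---a contradiction---so $\MM$ is acyclic and the proof is complete.
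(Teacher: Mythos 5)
Your proposal is correct, and in fact the element matching you construct is essentially the one the paper constructs, only bookkept differently: the paper groups the two fibers over a matched pair $(\gamma_1,\gamma_2)\in\MM_{\pi(\mathsf{F})}$ into a single block $\mathsf{X}_{(\gamma_1,\gamma_2)}=\{\alpha\in\mathsf{F}:\pi(\alpha)=\gamma_1\}*2^{E_j}$ and gives it a complete acyclic matching by toggling one fixed element of $E_j$ (Lemmas \ref{boolean} and \ref{product}), which unfolds exactly into your rules (I) and (II), while its treatment of a critical fiber as the join $\mathsf{P}(E_{i_1},\tau)*\cdots*\{\tau\}$ is your slab decomposition (III); likewise your canonical lift in part (1) is just a leaner choice than the paper's $\sigma=\bigcup_{i\in\overline{\sigma}}E_i$. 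The genuine divergence is in the acyclicity verification. The paper traverses a hypothetical cycle and records only the edges crossing between blocks; a down-step whose projection drops along a matched pair $(\gamma_1,\gamma_2)$ stays \emph{inside} the block $\mathsf{X}_{(\gamma_1,\gamma_2)}$, so your ``problematic'' Type B edges never show up among the crossing edges, and the crossing edges project directly to a directed cycle in $\DD(\pi(\mathsf{F}),\MM_{\pi(\mathsf{F})})$, a contradiction. Because you keep the two fibers of a matched pair separate, you must dispose of those edges by hand, and your fix is exactly right: such a top element $\tau$ lies in $\mathsf{G}_\beta$ with $e_j\notin\tau$, hence is matched \emph{upward} to $\tau\cup\{e_j\}$ by rule (II), so it cannot simultaneously be the larger element of a pair on a cycle (and the case $e=e_j$ is excluded because the down-edge would then be a matched pair, which the digraph forbids). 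Your lexicographic argument---a linear extension of the projected digraph as primary key, linear extensions of the (acyclic) within-fiber digraphs as secondary key---then makes every remaining cycle edge strictly increasing, which is a valid alternative to the paper's block-crossing analysis. In short, both routes rest on the same two facts (acyclicity of $\MM_{\pi(\mathsf{F})}$ and of the fiber matchings); the paper's coarser partition makes the delicate case invisible, whereas your finer partition forces you to notice and handle it explicitly, in exchange for a more transparent identification of the critical sets as the lifts $\sigma^\ast$ with $|\sigma^\ast|=|\pi(\sigma^\ast)|-|\pi(\tau)|+|\tau|$.
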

	
	\begin{proof} For part {\em (\ref{proj lemma easy})}, it follows from the definition that $\pi(\mathsf{F}) \subseteq \{\overline{\sigma}\in \mathsf{Q}: \pi(\tau) \subseteq \overline{\sigma}\}$. For the reverse inclusion
		consider a set $\overline{\sigma}\in \mathsf{Q}$ such that $\pi(\tau)\subseteq \overline{\sigma}$.
		If we set $\sigma=\bigcup_{i\in \overline{\sigma}}E_i$, then $\pi(\sigma)=\overline{\sigma}$ and $\tau \subseteq \sigma$. 
		Hence, $\overline{\sigma}\in \pi(\mathsf{F})$.
		
		We now prove part {\em (\ref{proj lemma main})}. For a pair $(\gamma_1, \gamma_2) \in \MM_{\pi(\mathsf{F})}$, where $(\gamma_2\setminus \gamma_1) = \{i\}$ for some $i\in I$, define the family
		\[\mathsf{X}_{(\gamma_1, \gamma_2)}=\{\alpha \in \mathsf{F}: \pi(\alpha)=\gamma_1\}*2^{E_i}.\]
		Similarly, for a critical set $\gamma \in \mathsf{U}_{\pi(\mathsf{F})}$ where $\gamma = 
		\{i_1,\dots, i_{|\gamma|}\}\subseteq I$ define the family 
		\[\mathsf{X}_\gamma=\{\alpha \in \mathsf{F}: \pi(\alpha)=\gamma\}= \mathsf{P}(E_{i_1},\tau)*\cdots *\mathsf{P}(E_{i_{|\gamma|}},\tau) *\{\tau\}\]
		where 
		\[\mathsf{P}(E_i,\tau) = 
		\begin{cases}
		2^{E_i}\setminus \{\emptyset \} & \text{ when } (E_i \cap \tau) = \emptyset,\\
		2^{(E_i\setminus \tau)} & \text{ when } (E_i\cap \tau) \neq \emptyset \neq (E_i \setminus \tau),\\
		\{\emptyset\} & \text{ when } E_i\subseteq \tau.
		\end{cases}\]
		Note that this gives us a partition of $\mathsf{F}$ into  \[\mathsf{F} = ( \textstyle{\bigcup} \mathsf{X}_{(\gamma_1,\gamma_2)}) \cup ( \textstyle{\bigcup} \mathsf{X}_{\gamma} ),\] where $(\gamma_1, \gamma_2)$ ranges over all pairs in $\MM_{\pi(\mathsf{F})}$ and $\gamma$ ranges over all critical sets in $\mathsf{U}_{\pi(\mathsf{F})}$. 
		
		By Lemma \ref{boolean} we see that $\mathsf{P}(E_i, \tau)$ has an acyclic matching with a single critical set of size one when  $(E_i\cap \tau) = \emptyset$. 
		If $(E_i\cap \tau)\neq \emptyset$, then $\mathsf{P}(E_i, \tau)$ has a complete acyclic matching when $(E_i\setminus \tau) \neq \emptyset$, and an acyclic matching with a single critical set of size zero when $E_i\subseteq \tau$.
		
		By Lemma \ref{product} it follows that each of the families $\mathsf{X}_{(\gamma_1, \gamma_2)}$ has a complete acyclic matching $\MM_{(\gamma_1, \gamma_2)}$. 
		By the observations above, Lemma \ref{product} implies that there is an acyclic matching $\MM_\gamma$ on the family $\mathsf{X}_\gamma$ which is either complete, or has a single critical set whose size equals $|\tau|$ plus the number of terms in the join for which $E_{i_j}\cap \tau=\emptyset$. That is, there is a single critical set of size  $|\gamma|-|\pi(\tau)|+|\tau|$.
		
		We set $\MM=(\bigcup\MM_{(\gamma_1,\gamma_2)})\cup (\bigcup\MM_\gamma)$, where $(\gamma_1, \gamma_2)$ ranges over all pairs in $\MM_{\pi(\mathsf{F})}$ and $\gamma$ ranges over all critical sets in $\mathsf{U}_{\pi(\mathsf{F})}$. 
		Clearly, $\MM$ is an element matching on $\mathsf{F}$ with family of critical sets $\mathsf{U}_\mathsf{F}$ such that the restriction $\pi: \mathsf{U}_\mathsf{F}\to \mathsf{U}_{\pi(\mathsf{F})}$ is an injection where $|\gamma|=|\pi(\gamma)|-|\pi(\tau)|+|\tau|$ for every $\gamma \in \mathsf{U}_{\mathsf{F}}$.
		It remains to show that $\MM$ is acyclic. 
		
		For contradiction, suppose there is a directed cycle 
		%
		%
		satisfying Lemma \ref{cycle}.
		We traverse this cycle, keeping track of which part in our partition of $\mathsf{F}$ we are currently in, and record every directed edge $(\tau_i, \sigma_{i+1})$ which goes between distinct parts. It is easily seen that this results in a non-empty (circular) subsequence $(\sigma_{1}, \tau_{1}, \dots, \sigma_{t}, \tau_{t})$ together with a (circular) sequence of families $(X_1, \dots, X_t)$ where
		\begin{itemize}
			\item $X_i = \mathsf{X}_{(\gamma_1, \gamma_2)}$ or $\mathsf{X}_\gamma$, for some pair $(\gamma_1, \gamma_2) \in \MM_{\pi(F)}$ or some $\gamma \in \mathsf{U}_{\pi(F)}$,
			\item $\sigma_{i}, \tau_{i} \in X_i$ for every $i\in [t]$,
			\item $\tau_{i} \supsetneq \sigma_{i+1}$ and $X_i\ne X_{i+1}$ (indices are taken modulo $t$), and
			\item $s=|\tau_i|=|\sigma_{j}|+1$ for all $i,j\in [t]$. \end{itemize}
		Note that $\pi(\tau_i) \neq \pi(\sigma_{i+1})$ or else we would have $X_i = X_{i+1}$. It follows that $|\pi(\tau_i)|= |\pi(\sigma_{i+1})|+1$ for all $i\in [t]$, which implies $s' = |\pi(\tau_i)| = |\pi(\sigma_j)|+1$ for all $i,j\in [t]$, since the sequence is circular. Note that if $\sigma_i, \tau_i \in \mathsf{X}_{\gamma}$, then $\pi(\sigma_i) = \pi(\tau_i)$. Therefore it must be the case that every $X_i$ is of the type $\mathsf{X}_{(\gamma_1, \gamma_2)}$.
		%
		%
		%
		%
		%
		But this means that we can find a subsequence of $(\pi(\sigma_1), \pi(\tau_1), \dots, \pi(\sigma_r), \pi(\tau_r))$ which induces a directed cycle in $\mathcal{D}(\pi(\mathsf{F}), \MM_{\pi(\mathsf{F})})$, which contradicts the assumption that $\MM_{\pi(\mathsf{F})}$ is an acyclic matching on $\pi(\mathsf{F})$.
	\end{proof}

	\subsection{The Gallai--Edmonds decomposition} \label{section_ge}
	Let $G$ be a graph on the vertex set $V$. There is a canonical partition of the vertex set \[V = D \cup A \cup C,\] which is useful for describing the structure of all maximum matchings in $G$. The parts, $D$, $A$, and $C$, are defined as
	\begin{align*}
	D &= D(G) = \{v\in V : \nu(G-v) = \nu(G)\}, \\ 
	A &= A(G) = N_G(D), \\ 
	C &= C(G) = V\setminus (D\cup A).
	\end{align*}
	We further partition $D$ into subparts $D = D_1\cup \cdots \cup D_r$ such that the each induced subgraph $G[D_i]$ is a connected component of $G[D]$. Each $D_i$ is called a {\em component of $D$}.
	
	This canonical partition of the vertex set of $G$ is called the {\em Gallai--Edmonds decomposition of $G$}, and is denoted as $(D_1, \dots, D_r ; A; C)$. 
	
	
	\begin{remark}
		The Gallai-Edmonds decomposition of $G$ is often expressed only as $(D; A; C)$. For our purpose it will be important to take the components of $D$ into account, and by our notation we have  $D = \bigcup D_i$. 
	\end{remark}
	
	Let $V$ be a vertex set. We say that a graph $M$ on $V$ \textit{is a matching on $V$} if $\deg_M(v)\leq 1$ for every $v\in V$, and that a vertex $v$ is covered by $M$ if $\deg_M(v)=1$. For a subset $W\subseteq V$, we say that $W$ is covered by $M$ if $w$ is covered by $M$ for every $w\in W$. (Note that when $W$ is empty, the empty graph $\emptyset$ is vacuously a matching covering $W$). Furthermore, we say that $M$ is \textit{a perfect matching on $V$} if $V$ is covered by $M$.  
	
	A graph $G$ on the vertex set $V$ is called \textit{factor critical on $V$} if for every vertex $v\in V$, the graph $G-v$ has a perfect matching on $V\setminus \{v\}$. It is easily seen that if $G$ is factor critical on $V$, then $G$ is connected and $|V|$ must be odd. (Note that if $|V|=1$, then the empty graph $\emptyset$ is factor critical on $V$.) 
	
	With these notions in place, the Gallai--Edmonds decomposition of a graph has the following properties. For a more detailed discussion, see \cite{lovasz_plummer}. 
	\begin{thm}[Gallai--Edmonds Decomposition Theorem] \label{ge-decomposition}
		Let $G$ be a graph on the vertex set $V$ with Gallai--Edmonds decomposition $(D_1,\dots, D_r;A;C)$. Let $D = \bigcup D_i$. Then the following hold.
		\begin{enumerate}[(1)]
			\item For each $D_i$, $G[D_i]$ is factor critical on $D_i$.
			\item $G[C]$ has a perfect matching on $C$.
			\item \label{ge bipartite condition} For every $i\in [r]$, there is a matching $M_i$ in $G[D\setminus D_i,A]$ covering $A$ such that $|N_{M_i}(A)\cap D_j|\leq 1$ for every $j\in [r]$.
			
			\item \label{ge equation} $G[D]$ has exaclty $|A|+|V| - 2 \nu(G)$ connected components, that is,  
			\[r = |A|+|V| - 2 \nu(G).\]
		\end{enumerate}
	\end{thm}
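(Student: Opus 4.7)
The plan is to prove the four statements jointly by combining the standard alternating path analysis for matchings with the Tutte--Berge formula $\nu(G) = \tfrac{1}{2}\bigl(|V| - \max_{U\subseteq V}(o(G-U) - |U|)\bigr)$, where $o(H)$ counts the odd components of $H$. The first step is to establish the structural lemma that drives everything: for any fixed maximum matching $M$ of $G$, the set $D$ equals the collection of vertices that are either $M$-exposed or reachable from an $M$-exposed vertex by an $M$-alternating walk ending with a non-matching edge. Two immediate consequences are that $D$ does not depend on the choice of $M$, and that every vertex of $A$ is covered by every maximum matching with its $M$-partner lying in $D$. The identity $A = N_G(D)$ then follows by checking that a vertex outside $D \cup A$ cannot have a neighbor in $D$ without itself being $M$-reachable.

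For part (1), I pick $v \in D_i$ together with a maximum matching $M_v$ exposing $v$, and for another $w \in D_i$ I use the connectivity of $G[D_i]$ to find a $v$-to-$w$ path that can be promoted to an $M_v$-alternating walk. Symmetric-difference surgery along this walk produces a maximum matching $M_w$ exposing $w$; crucially, the walk can be arranged to stay inside $D_i$ because any detour through $A$ or $C$ would violate the alternating structure from the lemma. Restricting $M_w$ to $D_i$ yields a perfect matching on $D_i \setminus \{w\}$. Part (2) is obtained similarly: since each $v \in C$ is covered by every maximum matching, and no matching edge can connect $C$ to $A$ (otherwise the $A$-endpoint would join an alternating path pulling its $C$-neighbor into $D$), the restriction of any maximum matching to $C$ is a perfect matching on $C$.

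For part (4) I apply Tutte--Berge with $U = A$: by part (1) the components $D_1, \dots, D_r$ are all odd and by part (2) the components of $G[C]$ are all even, so $o(G - A) = r$, and combined with the Tutte--Berge identity this gives $r = |A| + |V| - 2\nu(G)$. Part (3) is the most intricate. I form the auxiliary bipartite graph $H$ on vertex classes $A$ and $\{D_1, \dots, D_r\}$ with $a$ adjacent to $D_j$ iff $a$ has a neighbor in $D_j$, and establish the strengthened Hall condition $|N_H(S)| \ge |S| + 1$ for every nonempty $S \subseteq A$. The ``slack of one'' comes from optimality of $A$ in Tutte--Berge: if $|N_H(S)| \le |S|$, then replacing $A$ by $(A \setminus S) \cup (\text{non-neighbor components' vertex-unions})^c$-type sets would strictly improve the Tutte--Berge minimum beyond the value achieved by $A$ in part (4), a contradiction. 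Hall's theorem then produces a perfect matching in $H$ from $A$ into $r - 1$ of the components, avoiding any prescribed $D_i$, and factor-criticality from part (1) upgrades each component-assignment to a specific adjacent vertex.

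I expect the main obstacle to be part (3). The other parts follow from relatively routine alternating path manipulations once the structural lemma is in place, but part (3) requires carefully extracting a sharp Hall-type slack condition from the optimality of $A$ in Tutte--Berge, and then upgrading an abstract ``matching to components'' into an actual edge-matching of $A$ into $D \setminus D_i$. The upgrade relies on factor-criticality, creating an interdependency with part (1) that makes a truly linear presentation delicate; in practice one typically proves all parts simultaneously by structural induction on $|V|$, with the base case $|V| = 1$ handled trivially.
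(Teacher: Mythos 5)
First, a point of comparison: the paper does not prove Theorem \ref{ge-decomposition} at all; it is the classical Gallai--Edmonds theorem, quoted as a known tool with a reference to \cite{lovasz_plummer}. So your sketch is being measured against the standard proofs in the literature rather than against anything in the paper. Your overall route (an alternating-path characterization of $D$, Tutte--Berge with $U=A$, and a surplus Hall condition for part (3)) is indeed one of the recognized strategies, but as written it has concrete errors. The key structural lemma is misstated: with respect to a fixed maximum matching $M$, the set $D$ consists of the $M$-exposed vertices together with those reachable from an exposed vertex by an $M$-alternating walk of \emph{even} length, i.e.\ one ending with a \emph{matching} edge; walks ending with a non-matching edge pick up vertices of $A$ as well. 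For instance, in the path $a$--$b$--$c$ with $M=\{bc\}$, the single edge $ab$ is an alternating walk from the exposed vertex $a$ ending with a non-matching edge, yet $b\in A$, not $D$. Since you derive the independence of $D$ from the choice of $M$ and the claim that every $a\in A$ is matched into $D$ as ``immediate consequences'' of this lemma, the foundation has to be repaired; and the second consequence is in any case not immediate even with the correct lemma (one must handle the case where the $M$-partner of $a$ already lies on the alternating path).

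More seriously, parts (3) and (4) are circular as you present them. Plugging $U=A$ into the easy direction of Tutte--Berge, together with (1) and (2), yields only the inequality $|V|-2\nu(G)\ \geq\ o(G-A)-|A| = r-|A|$; the equality in (4), i.e.\ the optimality of $A$ as a Tutte--Berge set, requires exhibiting a matching that misses only $r-|A|$ vertices, and that construction is exactly what (3) (matching $A$ into distinct components $D_j$, avoiding a prescribed one is not even needed for this) together with (1) and (2) provides. So (4) needs (3), while your proof of (3) appeals to ``the value achieved by $A$ in part (4).'' Moreover, even granting optimality of $A$, your replacement argument does not give a contradiction: if $|N_H(S)|\leq |S|$, taking $U'=A\setminus S$ only yields $o(G-U')-|U'|\ \geq\ (r-|S|)-(|A|-|S|) = o(G-A)-|A|$, i.e.\ $U'$ is again optimal, not strictly better; extracting the surplus condition $|N_H(S)|\geq |S|+1$ needs a finer parity or counting argument, or the usual induction via the stability lemma as in \cite{lovasz_plummer}. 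Finally, in (1) the assertion that the alternating walk between two vertices of $D_i$ ``can be arranged to stay inside $D_i$'' is precisely the content of Gallai's lemma and is not justified: one must show that every vertex of $D_i$ is missed by a maximum matching of $G[D_i]$ itself, not merely of $G$. You correctly sense the interdependence in your closing remark, but the sketch as written does not resolve it.
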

	
	\begin{remark}
		Note that by (\ref{ge bipartite condition}) of Theorem \ref{ge-decomposition}, the number of components of $D$ is strictly greater than $|A(G)|$ whenever $A(G)$ is non-empty. Also the equation in (\ref{ge equation}) of Theorem \ref{ge-decomposition} can be rewritten as
		\[\textstyle{\sum_{i=1}^r}(|D_i|-1) + 2|A| + |C| = 2\nu(G).\]
	\end{remark}
	
	\begin{remark} \label{ge consequences}
		One consequence of Theorem \ref{ge-decomposition} is the following description of the maximum matchings in the graph $G$: Each maximum matching of $G$ consists of
		\begin{itemize}
			\item a perfect matching on $G[C]$,
			\item an edge $ad_a$ for each $a\in A$, where $d_a \in D$ and where $d_a$ and $d_b$ are in distinct component of $D$ for distinct vertices $a, b\in A$, and
			\item a matching of size $(|D_i|-1)/2$ on each component $D_i$ of $D$.
		\end{itemize}
	\end{remark}

	It is useful to know how the Gallai--Edmonds decomposition of a graph is affected by adding or deleting a single edge. One such criterion is given by the following.
	
	\begin{lemma} \label{same-ge} Let $G$ be a graph with  Gallai--Edmonds decomposition $(D_1, \dots, D_r;A;C)$. If $e \in K_A \cup K_{A,C}$, then $G+e$ and $G-e$ have the same Gallai--Edmonds decomposition as $G$. 
	\end{lemma}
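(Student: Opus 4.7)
The plan is to reduce Lemma \ref{same-ge} to two sub-claims: $\nu(G \pm e) = \nu(G)$, and $D(G \pm e) = D(G)$. Once these hold the conclusion is automatic, since $e$ has no endpoint in $D$: then $(G\pm e)[D] = G[D]$ (so the components of $D$ are unchanged), $A(G\pm e) = N_{G\pm e}(D) = N_G(D) = A(G)$, and $C$ is the remaining complement.

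For the matching-number equality I would invoke Remark \ref{ge consequences}, which says every maximum matching of $G$ is built from a perfect matching on $C$, edges between $A$ and distinct $D$-components, and matchings inside the components of $D$. No such matching uses an edge in $K_A \cup K_{A,C}$, so removing $e$ from $G$ preserves all maximum matchings and $\nu(G-e) = \nu(G)$. For $\nu(G+e)$, any matching of $G+e$ of size $\nu(G)+1$ would necessarily contain $e = xy$ with $x\in A$; deleting $e$ would then yield a matching of $G$ of size $\nu(G)$ missing $x$, contradicting that every maximum matching of $G$ covers $A$.

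To verify $D(G\pm e) = D(G)$, I would test each vertex $v$ against the defining condition $\nu(G-v) = \nu(G)$. Three cases are routine. If $v$ is an endpoint of $e$ then $v \in A\cup C$, so $v\notin D(G)$, and moreover $(G\pm e) - v = G - v$, yielding $\nu((G\pm e)-v) = \nu(G-v) < \nu(G) = \nu(G\pm e)$, so $v\notin D(G\pm e)$. If $v \in D(G)$ is not an endpoint of $e$, Remark \ref{ge consequences} provides a maximum matching of $G$ missing $v$ that automatically avoids $e$, so $\nu((G\pm e)-v) \geq \nu(G)$ and $v\in D(G\pm e)$. If $v\notin D(G)$ is not an endpoint of $e$, then for the $-e$ side the inclusion $(G-e)-v \subseteq G-v$ gives $\nu((G-e)-v) \leq \nu(G-v) < \nu(G)$, so $v\notin D(G-e)$.

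The only real obstacle is the remaining case for $G+e$: show that when $v\notin D(G)$ and $v$ is not an endpoint of $e$, one has $v\notin D(G+e)$. I plan to argue by contradiction: suppose there is a matching $M$ of $(G-v)+e$ of size $\nu(G)$. Then $e = xy \in M$, and $N := M \setminus \{e\}$ is a matching of $G$ of size $\nu(G)-1$ avoiding $\{v,x,y\}$. By Berge's theorem $N$ admits an augmenting path $P$ in $G$, and since $v, x, y$ are $N$-unmatched they can appear on $P$ only as endpoints. If $P$ avoids $v$, then $N\triangle P$ is a maximum matching of $G$ missing $v$, contradicting $v\notin D(G)$. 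If neither endpoint of $P$ is $x$ or $y$, then $x, y$ stay unmatched in $N\triangle P$ and adjoining $e$ produces a matching of $G+e$ of size $\nu(G)+1$, contradicting the first step. Thus $P$ must run from $v$ to $x$ or to $y$, in which case $N\triangle P$ is a maximum matching of $G$ leaving the other endpoint of $e$ uncovered—a vertex in $A \cup C$—contradicting Remark \ref{ge consequences}. This augmenting-path trichotomy is the crux; everything else is direct bookkeeping around the structure of maximum matchings.
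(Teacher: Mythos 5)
Your proof is correct, and it shares the paper's overall skeleton: show that $\nu$ and $D$ are unchanged, then note that since $e$ has both endpoints in $A\cup C$ the graph induced on $D$ is untouched, so the components of $D$, the set $A=N(D)$, and hence $C$ are all preserved. Where you diverge is at the crux. The paper's proof rests on a single structural claim: no maximum matching of $G\pm e$ can use $e$, because a matching containing $e$ ties up at least one vertex of $A$ and therefore leaves strictly more vertices of $D$ uncovered than a maximum matching of $G$ does (a counting argument implicit in Theorem \ref{ge-decomposition} and Remark \ref{ge consequences}); consequently $G$ and $G\pm e$ have literally the same set of maximum matchings, and since $D$ is determined by which vertices are missed by some maximum matching, the whole lemma follows at once. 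You never prove this stronger statement. Instead you establish only $\nu(G\pm e)=\nu(G)$ and then verify $D(G\pm e)=D(G)$ vertex by vertex, and for the one non-routine inclusion (showing $v\notin D(G+e)$ when $v\notin D(G)$ and $v$ is not an endpoint of $e$) you invoke Berge's augmenting-path theorem and run a trichotomy on the endpoints of the augmenting path; your three cases are exhaustive and each yields a genuine contradiction, using only that maximum matchings of $G$ cover $A\cup C$ and avoid edges of $K_A\cup K_{A,C}$ (Remark \ref{ge consequences}). (The step ``$e\in M$'' in that case is left implicit, but it is immediate, since otherwise $M$ would be a maximum matching of $G$ missing $v$.) What the paper's route buys is brevity and the stronger conclusion that the collection of maximum matchings itself is invariant; what your route buys is that it replaces the covering/counting argument by a purely local augmenting-path analysis, at the cost of importing Berge's theorem and a longer case distinction. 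Both arguments are complete.
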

	
	\begin{proof} We first prove that $G+e$ has the same Gallai--Edmonds decomposition when $e \notin G$. Note that a maximum matching in $G+e$ does not use the edge $e$, because any matching containing the edge $e$ would cover less vertices of $D$ than the maximum matchings in $G$. Therefore $G$ and $G+e$ have exactly the same sets of maximum matchings. Since the part $D$ of the Gallai--Edmonds decomposition is completely determined by the collection of maximum matchings in $G$, it follows that $D(G+e) = D(G)$. Since $e$ is not incident to any vertex in $D$, it follows that $A(G+e) = A(G)$ and $C(G+e) = C(G)$. Finally, adding the edge $e$ to $G$ does not change the connected components of $D(G)$, and therefore the Gallai-Edmonds decompositions are the same. The proof for $G-e$ when $e\in G$ is similar and we leave it to the reader.  
	\end{proof}
	
	\subsection*{Prescribed Gallai--Edmonds decompositions} Suppose we are given a family $\mathsf{F}$ of graphs on a  vertex set $V$, and
	we want to find an acyclic matching on $\mathsf{F}$. The main technique, introduced in \cite{linusson_bounded_matching_complex},  is to partition $\mathsf{F}$ according to their Gallai--Edmonds decompositions and then find acyclic matchings for each individial part.
	
	\vspace{1ex}
	
	For the family $\mathsf{F}$, 
	let $\mathsf{F}_{(D_1, \dots, D_r; A; C)}\subseteq \mathsf{F}$ denote the subfamily of graphs with Gallai--Edmonds decomposition $(D_1, \dots, D_r; A; C)$. Note that for certain partitions of $V$ the subfamily $\mathsf{F}_{(D_1, \dots, D_r; A; C)}$ could be empty, but the collection of all the non-empty subfamilies gives us a partition of $\mathsf{F}$. In the specific case when all the graphs in $\mathsf{F}$ have the same matching number, we have the following.
	
	\begin{lemma}\label{combine-ge}
		Let $\mathsf{F}$ be a family of graphs on the vertex set $V$, where all members of $\mathsf{F}$ have the same matching number. Suppose for each non-empty subfamily $\mathsf{F}_{(D_1, \dots, D_r;A;C)} \subseteq \mathsf{F}$ we have an acyclic matching. Then, the union of these acyclic matchings is an acyclic matching on $\mathsf{F}$.
	\end{lemma}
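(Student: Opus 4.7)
The plan is to invoke the Cluster Lemma (Lemma \ref{cluster}). Take $Q$ to be the collection of Gallai--Edmonds decompositions $(D_1,\dots,D_r;A;C)$ arising from members of $\mathsf{F}$, and let $\varphi:\mathsf{F}\to Q$ send each graph to its decomposition, so that the fibers of $\varphi$ are precisely the non-empty subfamilies $\mathsf{F}_{(D_1,\dots,D_r;A;C)}$. The hypothesis then provides an acyclic matching on each fiber, and Lemma \ref{cluster} will finish the job as soon as I equip $Q$ with a partial order making $\varphi$ monotone.

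The partial order I plan to use is the following: $(D_1,\dots,D_r;A;C)\leq (D'_1,\dots,D'_s;A';C')$ if $\bigcup_i D_i\subseteq \bigcup_j D'_j$, $C'\subseteq C$, and each $D_i$ is contained in some $D'_j$. Reflexivity, antisymmetry, and transitivity are routine to verify; and since $V = D\cup A\cup C = D'\cup A'\cup C'$ are disjoint decompositions of $V$, the three conditions force $A'\subseteq A\cup C$, which is consistent with Lemma \ref{same-ge}.

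To verify monotonicity of $\varphi$, suppose $G\subseteq G'$ are both in $\mathsf{F}$, so that $\nu(G)=\nu(G')$. The crucial observation is that $D(G)\subseteq D(G')$: if $v\in D(G)$, then $\nu(G-v)=\nu(G)$, while $G-v\subseteq G'-v$ gives $\nu(G)\leq \nu(G'-v)\leq \nu(G')=\nu(G)$, and hence $v\in D(G')$. The inclusion $C(G')\subseteq C(G)$ then follows immediately from the identity $C = V\setminus(D\cup A) = V\setminus N_G[D]$ and the fact that adding edges cannot decrease $N_{G}[D(G)]$ inside $D(G')$. Moreover, each component $D_i$ of $G[D(G)]$, being connected in $G$, remains connected in $G'$, and since $D_i\subseteq D(G)\subseteq D(G')$, it lies entirely within a single connected component of $G'[D(G')]$, yielding the refinement condition.

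The main obstacle is essentially the monotonicity check; once $D(G)\subseteq D(G')$ is established, every other clause of the partial order follows with little effort, and the Cluster Lemma then produces the desired acyclic matching on $\mathsf{F}$ as the union of the given ones on the fibers.
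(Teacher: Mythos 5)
Your proof is correct, but it is packaged differently from the paper's. Both arguments ultimately rest on the same two monotonicity facts for $G\subseteq G'$ with $\nu(G)=\nu(G')$, namely $D(G)\subseteq D(G')$ and consequently $D(G)\cup A(G)\subseteq D(G')\cup A(G')$ (equivalently $C(G')\subseteq C(G)$); your derivation of these is the same as the paper's, though your phrase ``adding edges cannot decrease $N_G[D(G)]$ inside $D(G')$'' is really the cleaner statement $N_G(D(G))\cup D(G)\subseteq N_{G'}(D(G'))\cup D(G')$. Where you diverge is in how the argument is closed. The paper does not build a poset: it takes a putative directed cycle via Lemma \ref{cycle}, shows $D$, $A$, $C$ are constant along it using the inclusions above, and then handles the components by a counting argument --- deleting a single edge can only split components of $D$, while the number of components is forced to be $|A|+|V|-2\nu(G)$ by Theorem \ref{ge-decomposition}(\ref{ge equation}), so the components cannot change. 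You instead put a genuine partial order on the set of decompositions ($D$-inclusion, reverse $C$-inclusion, and refinement of the components), check that the decomposition map is a monotone poset map whose fibers are the subfamilies $\mathsf{F}_{(D_1,\dots,D_r;A;C)}$, and invoke the Cluster Lemma \ref{cluster}; the equality of components along a would-be cycle then comes from antisymmetry of your refinement order rather than from the Gallai--Edmonds count. Your inclusion of the refinement condition is essential (with only the $(D;A;C)$ conditions the order would fail antisymmetry and the fibers would be too coarse), and you did include it, so the argument is complete. What your route buys is a proof that avoids Theorem \ref{ge-decomposition}(\ref{ge equation}) entirely and delegates the cycle analysis to the general-purpose Lemma \ref{cluster}; what the paper's route buys is a self-contained cycle argument that only ever needs to compare sets differing by one edge and reuses an identity it needs elsewhere anyway.
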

	
	\begin{proof}
		Let $\mathcal{M}$ denote the union of the acyclic matchings. If $\mathcal{M}$ is not acyclic, then by Lemma \ref{cycle} there exists a directed cycle 
		\[(\sigma_0, \tau_0, \sigma_1, \tau_1, \dots, \sigma_{t-1}, \tau_{t-1})\]
		where $\sigma_i$ and $\tau_i$ have the same Gallai--Edmonds decompositions, and $\sigma_{i+1} \subsetneq \tau_i$, for every $i$ (indices are taken modulo $t$). We are going to show that the assumption on the matching number of the graphs in $\mathsf{F}$ implies that $\tau_i$ and $\sigma_{i+1}$ also have the same Gallai--Edmonds decomposition. Therefore such a directed cycle would belong to a single subfamily $\mathsf{F}_{(D_1, \dots, D_r; A; C)}$, contradicting the assumption that each of these matchings were acyclic.
		
		\medskip
		
		Consider graphs $G_1\subseteq G_2$ on the same vertex set with $\nu(G_1)=\nu(G_2)$. Observe that any maximum matching in $G_1$ is also a maximum matching in $G_2$, which implies that \[D(G_1)\subseteq D(G_2).\] Furthermore, for any vertex $v\in D(G_1)$ we have $N_{G_1}(v) \subseteq N_{G_2}(v)$ which implies that
		\[D(G_1)\cup A(G_1)\subseteq D(G_2)\cup A(G_2).\]
		
		Returning to the directed cycle $(\sigma_0, \tau_0, \sigma_1, \tau_1, \dots, \sigma_{t-1}, \tau_{t-1})$, the observation above implies that
		\[D(\sigma_0) = D(\tau_0) \supseteq D(\sigma_1) = D(\tau_1) \supseteq \cdots \supseteq D(\sigma_{t-1}) = D(\tau_{t-1}) \supseteq D(\sigma_0),\]
		and therefore $D = D(\sigma_i) = D(\tau_j)$ for all $i$ and $j$. This in turn implies, by the same argument, that $A = A(\sigma_i) = A(\tau_j)$ and $C = C(\sigma_i) = C(\tau_j)$ for all $i$ and $j$.
		
		It remains to show that the components of $D(\tau_i)$ and $D(\sigma_{i+1})$ are the same. By Lemma \ref{cycle},   $\sigma_{i+1}$ is obtained from $\tau_i$ by removing a single edge. It follows that the only change that could occur when we pass from $\tau_i$ to $\sigma_{i+1}$ is that we increase the number of components of $D$. But since the number of components is uniquely determined by $|A|$, $|V|$, and the matching number, it follows that $D(\tau_i)$ and $D(\sigma_{i+1})$ have the same number of components, and so the components must remain the same. This shows that all the elements of the directed cycle have the same Gallai--Edmonds decomposition. 
	\end{proof}
	
	\section{Three special families of graphs} \label{three special}
	
	The basic strategy of the proof of Theorem \ref{link-result} is to decompose our family of graphs into a join, where each term of the join is built up from one of three special families of graphs. The purpose of this section is to define these families. They are essentially motivated by the parts appearing in the Gallai--Edmonds decomposition theorem (Theorem \ref{ge-decomposition}) and the properties described in Remark \ref{ge consequences}. We also give key results concerning acyclic matchings of each of these families together with bounds on the sizes of the critical sets. The proofs of these results will be given in sections \ref{section_size perfect m}, \ref{section_size-d0} and \ref{section_size-yz}.

	\subsection{Perfect matchings}
	Recall that a graph $G$ on the vertex set $V$ has a perfect matching if there is a matching that covers $V$. Note that if $G$ has a perfect matching on $V$, then $|V|$ must be even. 
	
	For a fixed graph $H\subseteq K_V$, define the family 
	\[\mathsf{PM}_H = \{G\subseteq K_V : G \text{ has a perfect matching on }V, H \subseteq G\}.\]
	Note that the family $\mathsf{PM}_H$ is non-empty if and only if $|V|$ is even. When $V$ is empty, we set $\mathsf{PM}_H=\{\emptyset \}$ by convention.
	In section \ref{section_size perfect m} we prove the following.
	
	\begin{prop}\label{size perfect m}
		Given a graph $H\subseteq K_V$ where $|V|$ is even. There exists an acyclic matching on $\mathsf{PM}_H$ such that any critical set $\sigma$ satisfies 
		\[|\sigma| \leq \textstyle{\frac{3}{2}}|V| +|H|.\]
		Moreover, the inequality is strict whenever $V$ is non-empty.
	\end{prop}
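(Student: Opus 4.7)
\bigskip
\noindent\textbf{Proof plan for Proposition \ref{size perfect m}.}

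The plan is to prove the bound by induction on $|V|$, using the tools of Section \ref{tools}. For the base case $|V|=0$, the family $\mathsf{PM}_H = \{\emptyset\}$, and the empty acyclic matching leaves a single critical set of size $0 = \frac{3}{2}|V|+|H|$, which is the reason the strict inequality fails only in this case.

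For the inductive step with $|V|\geq 2$, fix a vertex $v \in V$ and partition $\mathsf{PM}_H$ based on how $v$ is ``canonically'' matched. Concretely, I intend to apply the Cluster Lemma (Lemma \ref{cluster}) with a monotone poset map $\varphi$ whose fibers are indexed by a choice of partner $u$ for $v$ together with the neighbourhood structure of $v$. A natural candidate is $\varphi(G)=N_G(v)$, taking values in the Boolean lattice on $V\setminus\{v\}$; this is visibly monotone, and each non-empty fibre $\varphi^{-1}(S)$ (with $N_H(v)\subseteq S$) consists of graphs whose edges at $v$ are exactly $\{\{v,u\}:u\in S\}$, while their restriction $G'=G\cap K_{V\setminus\{v\}}$ must contain $H'=H\cap K_{V\setminus\{v\}}$ and admit a near-perfect matching on $V\setminus\{v\}$ missing some vertex in $S$.

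Within each fibre, I will peel off the fixed edges $\{v,u\}$ for $u\in S$ using the Join Lemma (Lemma \ref{product}) or the Projection Lemma (Lemma \ref{iden}), reducing to a problem on $V\setminus\{v\}$. To handle the ``near-perfect matching'' condition, I partition $\varphi^{-1}(S)$ further by choosing, in a canonical way, a single $u\in S$ that gets uncovered by the near-perfect matching on $V\setminus\{v\}$. After this choice, the residual problem is literally a perfect matching problem on $V\setminus\{v,u\}$ containing a modified forced subgraph $H'$ built from $H$ together with at most $|S|-1$ extra forced edges from $v$ to $S\setminus\{u\}$. The induction hypothesis applied on $V\setminus\{v,u\}$ yields an acyclic matching with critical sets of size at most $\tfrac{3}{2}(|V|-2)+|H'|$, with strict inequality. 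Lemma \ref{combine-ge} style bookkeeping together with the Projection/Join Lemmas lets me reassemble these into an acyclic matching on $\mathsf{PM}_H$.

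The main obstacle is twofold. First, defining the canonical ``partner $u$'' so that $\varphi$ remains a monotone poset map is delicate: a naive lex-smallest choice is typically anti-monotone when $G\subseteq G'$ enables a smaller matching, so I expect to work in the reversed poset, or to break ties by refining the Boolean Lemma argument edge-by-edge on the edges of $K_V\setminus H$ incident to $v$. Second, and more importantly, the precise accounting: at each inductive step $\tfrac{3}{2}|V|$ drops by $3$ when we pass to $V\setminus\{v,u\}$, and I need to show that the edges attached to the critical set in the reduction (the edge $\{v,u\}$, the at most $|S|-1$ forced edges of $v$ to $S\setminus\{u\}$, and possibly one more edge on $u$) increase its size by strictly less than $3+|H|-|H'|$. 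Verifying this inequality in every subcase — in particular showing strictness — is where the argument will require the most care, and is expected to use the minimality of the canonical choice of $u$ to rule out the extremal configurations.
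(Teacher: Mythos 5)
Your plan diverges from the paper's proof in a way that runs into a quantitative wall. By clustering along $\varphi(G)=N_G(v)$, every graph in a fibre $\varphi^{-1}(S)$ has its full star at $v$ frozen, so every critical set of any acyclic matching on that fibre contains all $|S|$ edges at $v$. Your own accounting then gives critical sets of size roughly $|S|+\tfrac32(|V|-2)+|H''|$ plus the extra edges at $u$, and since $S$ can be as large as $V\setminus\{v\}$ while $H$ can be empty, this is about $\tfrac52|V|-3$, well above the target $\tfrac32|V|+|H|$ once $|V|\geq 6$. The budget you compute (``strictly less than $3+|H|-|H'|$'') simply cannot absorb the $|S|-1$ forced edges at $v$ in general, and nothing in the fibre lets you cancel them, because toggling an edge at $v$ leaves the fibre. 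The paper avoids exactly this trap: it never fixes a whole vertex neighbourhood, but fixes a single edge $e_0=vw\notin H$, applies Lemma \ref{boolean} to reduce to the family of graphs in which \emph{every} perfect matching uses $e_0$, and only then stratifies, by the Gallai--Edmonds decomposition via Lemma \ref{combine-ge}, obtaining the join structure of Claim \ref{perfect matching join} whose factors are handled by Propositions \ref{size-d0} and \ref{size-yz} (with induction on $|V|$ used only for the $\mathsf{PM}_{H[C]}$ factor).

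There is a second gap in the reduction itself. Inside a fibre, partitioning by a ``canonical'' uncovered vertex $u\in S$ does not produce families of the form $\mathsf{PM}_{H''}$ on $V\setminus\{v,u\}$: the part associated with $u$ carries the negative conditions that no perfect matching of $G-v-u'$ exists for the earlier choices $u'$, so the induction hypothesis does not apply to it, and no analogue of Lemma \ref{combine-ge} is available to guarantee that the union of acyclic matchings over these ad hoc parts is acyclic (Lemma \ref{combine-ge} works because the Gallai--Edmonds data is canonical and monotone along the directed cycles of Lemma \ref{cycle}; a lex-smallest partner is not). You acknowledge both difficulties, but they are not technicalities to be patched: the missing ingredient is precisely the Gallai--Edmonds stratification together with the factor-critical families $\mathsf{FC}_H$ and $\mathsf{BFC}_{(X,Y,Z;H)}$ and the Join/Projection Lemmas, which is where essentially all of the work in the paper's argument lives.
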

	
	\subsection{Factor critical graphs}
	Recall that a graph $G$ is {\em factor critical} on the vertex set $V$ if for every $v\in V$ the induced subgraph $G-v$ has a perfect matching. Note that if $G$ is factor critical on $V$, then $|V|$ must be odd.
	
	For a fixed graph $H\subseteq K_V$, define the family 
	\[\mathsf{FC}_{H} = \{G\subseteq K_V : G \text{ is factor critical on } V, H\subseteq G\}.\]
	Note that when $|V|=1$, then $\mathsf{FC}_{H} = \{\emptyset\}$.
	In section \ref{section_size-d0} we prove the following
	
	\begin{prop}\label{size-d0} 
		Given a graph $H\subseteq K_V$ where  $|V|$ is odd. There exists an acyclic matching on $\mathsf{FC}_{H}$ such that any critical set $\sigma$ satisfies 
		\[|\sigma|\leq \textstyle{\frac{3}{2}}(|V|-1)+|H|.\]
		Moreover, the inequality is strict whenever $H$ contains at least one edge. 
	\end{prop}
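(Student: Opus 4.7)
The strategy leverages the ear-decomposition characterization of factor-critical graphs: $G$ is factor critical on a vertex set of size $2n+1$ if and only if $G$ admits an \emph{odd ear decomposition} $G = P_1 \cup P_2 \cup \cdots \cup P_k$, where $P_1$ is an odd cycle containing a fixed base vertex, and each subsequent $P_i$ ($i \geq 2$) is a path of odd length with endpoints in $P_1 \cup \cdots \cup P_{i-1}$ and internal vertices disjoint from it. Partitioning the ears by length, let $k_1$ denote the number of ears of length $1$ and $k_2$ the number of ears of length $\geq 3$. Counting edges and new vertices yields $|G| = 2n + k_1 + k_2$ and $k_2 \leq n$, so
\[
|G| \leq 3n + k_1 = \tfrac{3}{2}(|V|-1) + k_1.
\]
Moreover, an edge $e \in G$ appears as a length-$1$ ear in some odd ear decomposition of $G$ if and only if $G \setminus e$ is itself factor critical; equivalently, the length-$1$ ears of $G$ (across all decompositions) are precisely the non-essential edges of $G$.

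The acyclic matching is then constructed by pairing each $G \in \mathsf{FC}_H$ with $G \setminus e(G)$, where $e(G) \in G \setminus H$ is a canonically chosen ``removable'' edge (i.e., with $G \setminus e(G)$ still factor critical), whenever such an edge exists. The critical sets of the matching are exactly the graphs $G \in \mathsf{FC}_H$ all of whose non-$H$ edges are \emph{essential}. For such critical $G$, every odd ear decomposition must have all its length-$1$ ears inside $H$, so $k_1 \leq |H|$ and $|G| \leq 3n + |H|$, matching the bound claimed in the proposition.

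To formalize this as an acyclic matching in the sense of Section \ref{tools}, I would apply the Cluster Lemma (Lemma \ref{cluster}) with a monotone invariant $\varphi(G)$ chosen so that within each fiber the pairing $G \leftrightarrow G \setminus e(G)$ reduces to a single-edge application of the Boolean Lemma (Lemma \ref{boolean}); acyclicity across fibers then follows from the Cluster Lemma, and within each fiber from Lemma \ref{boolean}. The strict inequality when $H$ contains an edge is obtained from a refined ear-decomposition analysis: equality $|G| = 3n + |H|$ would force $k_1 = |H|$ and $k_2 = n$ simultaneously, requiring every length-$\geq 3$ ear to have length exactly $3$; a structural argument then exhibits an alternative decomposition of $G$ in which some non-$H$ edge plays the role of a length-$1$ ear, contradicting minimality.

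\textbf{Main obstacle.} The central difficulty is to realize the informal ``remove a canonically chosen removable edge'' construction as a genuinely acyclic (as opposed to merely element) matching, since the set of removable edges is not monotone under edge removal: removing one non-essential edge can render previously essential edges non-essential, potentially creating directed cycles. Designing the invariant $\varphi$ (or equivalently, a linear-order-compatible selection rule) so that the Cycle Lemma (Lemma \ref{cycle}) rules out such cycles requires careful combinatorial control over how the non-essential edge set transforms under single-edge removal. The strict-inequality improvement is a secondary but nontrivial task, depending on a non-uniqueness property of extremal ear decompositions that must be extracted when every long ear is a triangle.
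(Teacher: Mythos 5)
Your ear-decomposition analysis of the \emph{would-be} critical sets is sound as far as it goes: an edge $e$ occurs as a length-one ear in some odd ear decomposition of $G$ if and only if $G-e$ is factor critical, the edge count $|G|=2n+k_1+k_2$ with $k_2\le n$ is correct, and hence any $G\in\mathsf{FC}_H$ in which every edge of $G\setminus H$ is essential satisfies $|G|\le\frac32(|V|-1)+|H|$. But the proposition is an existence statement about an acyclic matching, and that matching is never constructed; the ``main obstacle'' you flag is in fact the entire content of the proof. The pairing $G\leftrightarrow G-e(G)$ is not even a well-defined element matching: since $\mathsf{FC}_H$ is upward closed, $G-e(G)$ typically has its own removable non-$H$ edge and would be matched twice (once as the top of its own pair, once as the bottom of the pair for $G$). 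The proposed repair --- the Cluster Lemma with an unspecified monotone invariant $\varphi$ and a single-edge Boolean matching inside each fiber --- does not yield your claimed description of the critical cells: with a fixed edge per fiber, the critical cells are the graphs $G$ in the fiber for which $G\pm e_0$ leaves the fiber, and there is no argument that these are contained in the set of graphs all of whose non-$H$ edges are essential. Designing $\varphi$ so that the fibers are Boolean intervals in a removable non-$H$ edge (and so that nothing else survives) is precisely the hard step, and nothing in the proposal addresses it. The strict inequality for $H\ne\emptyset$ is likewise only asserted: the ``alternative decomposition'' argument in the extremal case ($k_1=|H|$, all long ears triangles) is a nontrivial structural claim about relatively minimal factor-critical graphs and is given no proof.

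It is also worth noting how different this is from the paper's route, because the difference explains why the gap is not easily patched in isolation. The paper does not try to match within $\mathsf{FC}_H$ by deleting removable edges; instead it fixes one edge $e_0\notin H$, splits off a Boolean-matched part (Lemma \ref{boolean}), stratifies the remainder by Gallai--Edmonds decompositions (Lemma \ref{combine-ge}), exhibits each stratum as a join of smaller $\mathsf{FC}$-families, a projected $\mathsf{BFC}$-family and a projected $\mathsf{FC}$-family (Claim \ref{fc join}), and then assembles acyclic matchings by the Join and Projection Lemmas, with the size bounds coming from induction and from Proposition \ref{size-yz}. In particular, the critical cells it produces are not the inclusion-minimal members of $\mathsf{FC}_H$; the bound is obtained by bookkeeping across the join factors. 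If your approach could be completed --- i.e., if one could actually build an acyclic matching on $\mathsf{FC}_H$ whose critical cells are exactly the graphs with all non-$H$ edges essential --- it would be attractive, being self-contained and independent of Proposition \ref{size-yz}; but no general lemma in the paper (or in standard discrete Morse theory) guarantees such a matching exists, so as it stands the proposal has a genuine gap at its core.
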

	
	\subsection{The bipartite case} It is easy to see that a bipartite graph can not be factor critical, so instead we deal with some variations of this notion. 
	Let $G$ be a bipartite graph with vertex classes $X$ and $Y$. We say that $G$ is {\em $Y$-factor critical} if for every vertex $x\in X$, the graph $G-x$ has a matching which covers $Y$. Note that if $G$ is $Y$-factor critical, then we must have $|X|>|Y|$. (If $Y=\emptyset$, then by convention we say that the empty graph is $Y$-factor critical)
	\begin{remark} \label{Halls observation}
		By Hall's marriage theorem it is easily seen that $G$ is $Y$-factor critical if and only if $|N_G(Y')|>|Y'|$ for every non-empty subset $Y'\subseteq Y$.
	\end{remark}
	
	Now we give an extension of the notion of $Y$-factor critical graphs. As before let $G$ be a bipartite graph with vertex classes $X$ and $Y$. Fix a subset  
	$Z\subset X$. We say that the bipartite graph $G$ is {\em $(Y,Z)$-factor critical} if $G$ is $Y$-factor critical and the induced subgraph  $G[Z,Y]$ is $Z$-factor critical. Note that if $G$ is $Y$-factor critical, then we must have $|X|>|Y|$ when $|Y|>0$, and $|Y|>|Z|$ when $|Z|>0$. 
	
	When $Z$ is empty, then $G$ is $(Y,Z)$-factor critical if and only if $G$ is $Y$-factor critical. Moreover, when $Y$ is empty, $Z$ should  also be empty to satisfy the inequality condition, so by convention the empty graph $\emptyset$ is $(\emptyset,\emptyset)$-factor critical. Note that if $Y$ and $Z$ are both non-empty, and $G$ is $(Y,Z)$-factor critical, then we must have $|Z|<|Y|<|X|$.
	
	For a fixed bipartite graph $H\subseteq K_{X,Y}$ and a subset $Z\subseteq X$, define the family
	\[\mathsf{BFC}_{(X,Y,Z; H)} = \{G\subseteq K_{X,Y}: G \text{ is $(Y,Z)$-factor critical}, H\subseteq G\}.\]
	Note that as long as we have $|X|>|Y|$ when $|Y|>0$, and $|Y|>|Z|$ when $|Z|>0$, then the family $\mathsf{BFC}_{(X,Y,Z; H)}$ is non-empty. When $X$ or $Y$ is empty, we set $\mathsf{BFC}_{(X,Y,Z; H)}=\{\emptyset \}$ by convention.
	In section \ref{section_size-yz} we prove the following.
	
	\begin{prop}\label{size-yz}
		Given a bipartite graph $H\subseteq K_{X,Y}$ and a subset $Z\subseteq X$. There exists an acyclic matching on $\mathsf{BFC}_{(X,Y,Z; H)}$ such that any critical set $\sigma$ satisfies 
		\[|\sigma|\leq 2|Y|+|Z| + |H|.\]
		Moreover, the inequality is strict whenever $H$ contains at least one edge.
	\end{prop}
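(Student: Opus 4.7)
The plan is to prove Proposition \ref{size-yz} by induction on $|X|+|Y|$. The base case $Y=\emptyset$ forces $H=\emptyset$ (since $H\subseteq K_{X,\emptyset}=\emptyset$) and $\BFC_{(X,Y,Z;H)}=\{\emptyset\}$ by convention, so the unique critical set $\emptyset$ satisfies $|\emptyset|=0\leq |Z|=2|Y|+|Z|+|H|$; the strictness clause is vacuous.

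For the inductive step, fix a distinguished vertex $y_0\in Y$ and iteratively apply the Boolean Lemma (Lemma \ref{boolean}) to each edge $e=xy_0$ that is not already in $H$. Since adding an edge to a $(Y,Z)$-factor critical graph preserves $(Y,Z)$-factor criticality, the residual family after each Boolean reduction consists precisely of those $G$ containing $e$ for which $G-e$ is no longer $(Y,Z)$-factor critical---that is, $e$ is \emph{essential}. After processing all non-$H$ edges at $y_0$, the surviving graphs have every non-$H$ edge incident to $y_0$ essential.

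By Hall's marriage theorem (Remark \ref{Halls observation}), the essentiality of an edge $e=xy_0$ for the $Y$-factor criticality of $G$ is certified by a tight subset $Y'\subseteq Y$ containing $y_0$ with $|N_G(Y')|=|Y'|+1$ (and analogously by a tight $Z'\subseteq Z$ for the $Z$-factor criticality of $G[Z,Y]$ when $x\in Z$). I would then stratify the residual family according to these tight witnesses using the Cluster Lemma (Lemma \ref{cluster}). On each stratum, writing $X^\ast=N_G(Y')$, the set $Y'$ has no $G$-neighbors outside $X^\ast$, so $G$ splits along the partition $(X^\ast\cup Y')\sqcup ((X\setminus X^\ast)\cup (Y\setminus Y'))$ into two smaller $\BFC$-instances---one on $(X^\ast,Y',Z\cap X^\ast;H_1)$ and one on $(X\setminus X^\ast,Y\setminus Y',Z\setminus X^\ast;H_2)$ with $H=H_1\cup H_2$. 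Applying induction to each factor and combining via the Join Lemma (Lemma \ref{product}) yields critical sets of size at most $(2|Y'|+|Z\cap X^\ast|+|H_1|)+(2|Y\setminus Y'|+|Z\setminus X^\ast|+|H_2|)=2|Y|+|Z|+|H|$; the strictness clause propagates because whenever $|H|\geq 1$ at least one sub-instance inherits a non-empty $H$, triggering the strict inductive bound there.

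The main obstacles will be (i) verifying the join decomposition on each stratum, which requires showing that both induced subgraphs really are $(Y',Z\cap X^\ast)$- and $(Y\setminus Y',Z\setminus X^\ast)$-factor critical and that no unaccounted edges of $G$ cross the cut; (ii) handling the case $x\in Z$, where the essential edge $e$ may block the $Y$-factor criticality, the $Z$-factor criticality of $G[Z,Y]$, or both simultaneously, so the tight witnesses $Y'$ and $Z'$ must be combined into a coherent stratification; and (iii) dealing with degenerate strata where a tight witness exhausts one side (e.g., $Y'=Y$ with $X^\ast$ nearly all of $X$), which may require a refined induction parameter or an auxiliary base case. Mirroring the treatment of $\PM_H$ and $\FC_H$ in Propositions \ref{size perfect m} and \ref{size-d0}, the bulk of the technical work resides in this case analysis.
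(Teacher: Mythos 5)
Your outline has the right general flavor (a Boolean-lemma reduction at a chosen vertex/edge, a Hall-type witness for why the removed edge was essential, a decomposition into smaller instances, induction), but the two central steps do not hold up as stated. First, the iterated Boolean reduction does not yield the residual family you describe. After the first application of Lemma \ref{boolean} with $e_1=x_1y_0$ the residual is indeed $\{G:\ e_1\in G,\ G-e_1\notin \BFC_{(X,Y,Z;H)}\}$, because the family is upward closed above $H$; but when you apply the lemma again with $e_2=x_2y_0$, the relevant condition is membership in the \emph{previous residual}, not in $\BFC_{(X,Y,Z;H)}$. A graph $G$ with $e_2\notin G$ survives the second reduction whenever adding $e_2$ destroys the essentiality of $e_1$, so the surviving family is not ``every non-$H$ edge at $y_0$ present and essential,'' and the later analysis rests on a wrong description of the critical candidates. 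Second, and more seriously, the claimed split into two $\BFC$ instances along $(X^\ast\cup Y')\sqcup((X\setminus X^\ast)\cup(Y\setminus Y'))$ is not a join decomposition: nothing prevents edges of $G$ (or of $H$) between $X^\ast=N_G(Y')$ and $Y\setminus Y'$, and these cross edges are neither forbidden nor counted in your bound, while your strictness propagation assumes $H=H_1\cup H_2$, which fails exactly when $H$ has such cross edges. In addition, each stratum carries the constraint that the processed edges at $y_0$ are present and essential, so it is a proper subfamily of any join of $\BFC$ families and the induction hypothesis cannot be invoked for it directly; and the stratification by tight witnesses is not obviously a monotone poset map (tight sets appear and disappear as edges are added), so the Cluster Lemma does not apply as stated.

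For contrast, the paper first disposes of the special case $H[(X\setminus Z),Y]=K_{(X\setminus Z),Y}$, then performs a \emph{single} Boolean reduction with one carefully chosen edge $e_0=vw$, $v\in Y$, $w\in X\setminus Z$ (preferring $N_H(v)\neq\emptyset$), and replaces your tight-witness stratification by the Gallai--Edmonds decomposition of the residual graphs, all of which have $\nu(G)=|Y|$; Lemma \ref{combine-ge} (constant matching number), not the Cluster Lemma, is what makes combining the strata acyclic. On each stratum the correct structure is the join $\BFC_{(D,A,Z_D;H[D,A])} * \mathsf{F}_{YC}$, where the factor $\mathsf{F}_{YC}$ contains precisely the cross edges your split ignores; it is not itself a $\BFC$ family and requires a further stratification by Type $(S,T)$ and a four-factor join before induction applies, with bookkeeping sharp enough to give $2|Y|+|Z|-1+\max\{|H|-1,0\}$ so that the $+1$ from re-attaching $e_0$ still lands within the claimed bound. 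These are exactly the points your proposal leaves open, so as written the argument does not go through.
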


	\subsection{Structure of the proofs}\label{section_outline}
	
	The families of graphs and the bounds given in Propositions \ref{size perfect m}, \ref{size-d0}, and \ref{size-yz} are the most important technical tools needed for the proof of Theorems \ref{leray-result} and \ref{link-result}. The proofs of these propositions and their roles in the proof of Theorems \ref{leray-result} and \ref{link-result} is quite involved. In the next section we prove Theorems \ref{leray-result} and \ref{link-result} (modulo Propositions \ref{size perfect m}, \ref{size-d0}, and \ref{size-yz}). 
	
	\smallskip
	
	At this point we warn the reader that the order in which we prove the three propositions is in a sense the ``reverse'' of the logical order. More specifically, the proof of Proposition~\ref{size perfect m} for the family $\mathsf{PM}_H$ is given in section~\ref{section_size perfect m} and assumes the validity of Propositions~\ref{size-d0} and~\ref{size-yz}.
	The proof of Proposition \ref{size-d0} for the family $\mathsf{FC}_H$ is given in section~\ref{section_size-d0} and assumes the validity of Proposition~\ref{size-yz}. Finally, the proof of Proposition~\ref{size-yz} for the family $\mathsf{BFC}_{(X,Y,Z;H)}$ is given in section \ref{section_size-yz}.
	
	The reason for ``reversing'' the logical order is that the family $\mathsf{PM}_H$ is conceptually the simplest one and the methods for decomposing this family into parts are easier to explain. These methods and ideas will be refined and used again for the other families as well. At the other end, the family $\mathsf{BFC}_{(X,Y,Z;H)}$, which everything depends on, is the most technical one and requires the deepest analysis. We therefore postpone this one to the end.

	The whole structure of the proof is given in Figure \ref{fig_outline}.

	\begin{figure} 
		\centering
		\begin{tikzpicture}
		
		\begin{scope}[xshift = 0cm, yshift = 14cm]
		\fill[blue, opacity=.05] (-2.5,-.35) --++ (0,1.2) --++ (5,0) --++ (0,-1.2) -- cycle;
		\draw[semithick] (-2.5,-.35) --++ (0,1.2) --++ (5,0) --++ (0,-1.2) -- cycle;
		\node at (0,0.4) {Proposition \ref{size-yz} for $\BFC_{(X,Y,Z;H)}$};
		\node at (0,0) {\footnotesize (Proof in section 7)};
		\end{scope}

		\node at (0,12.3) {\footnotesize Join and Projection};
		
		\draw[-Stealth,double, semithick] (.5,10.4) --++ (-1,0);
		\node at (0,10.7) {\footnotesize Join};
		
		\draw[-Stealth,double, semithick] (-2,12.8) --++ (-.5,-1);
		
		\draw[-Stealth,double, semithick] (2,12.8) --++ (.5,-1);
		
		\draw[-Stealth, double, semithick] (-3,14) arc[start angle=105, end angle=200, radius = 4cm];
		\node at (-5.5,13.3) {\footnotesize Join and};
		\node at (-5.7,13) {\footnotesize Projection};
		
		\draw[-Stealth, double, semithick] (3,14) arc[start angle=75, end angle=-20, radius = 4cm];
		\node at (5.5,13) {\footnotesize Join};
		
		\begin{scope}[xshift = 3cm, yshift = 10.2cm]
		\fill[blue, opacity=.05] (-2,-.35) --++ (0,1.2) --++ (4,0) --++ (0,-1.2) -- cycle;
		\draw[semithick] (-2,-.35) --++ (0,1.2) --++ (4,0) --++ (0,-1.2) -- cycle;
		\node at (0,0.4) {Proposition \ref{size-d0} for $\FC_{H}$};
		\node at (0,0) {\footnotesize (Proof in section 6)};
		\end{scope}
		
		\begin{scope}[xshift = -3cm, yshift = 10.2cm]
		\fill[blue, opacity=.05] (-2,-.35) --++ (0,1.2) --++ (4,0) --++ (0,-1.2) -- cycle;
		\draw[semithick] (-2,-.35) --++ (0,1.2) --++ (4,0) --++ (0,-1.2) -- cycle;
		\node at (0,0.4) {Proposition \ref{size perfect m} for $\PM_{H}$};
		\node at (0,0) {\footnotesize (Proof in section 5)};
		\end{scope}
		
		\node at (-3.1,9.1) {\footnotesize Join};
		
		\node at (-0.6,8.7) {\footnotesize Join};
		
		\draw[-Stealth,double, semithick] (1,9.5) arc[start angle = -40, end angle = -70, radius = 6cm];
		\draw[-Stealth,double, semithick] (-3.5,9.5) --++ (-.3,-1);
		
		\begin{scope}[yshift = 7.8cm, xshift = -4cm]
		\fill[blue, opacity=.05] (-1.7,-.69) --++ (0,1.05) --++ (3.4,0) --++ (0,-1.05) -- cycle;
		\draw[semithick] (-1.7,-.69) --++ (0,1.05) --++ (3.4,0) --++ (0,-1.05) -- cycle;
		\node at (0,0) {Proposition \ref{special case}};
		\node at (0,-.39) {\footnotesize (Proof in section 4.1)}; 
		\end{scope}
		
		\begin{scope}[yshift=7.8cm, xshift = 4cm]
		\fill[blue, opacity=.05] (-1.7,-.69) --++ (0,1.05) --++ (3.4,0) --++ (0,-1.05) -- cycle;
		\draw[semithick] (-1.7,-.69) --++ (0,1.05) --++ (3.4,0) --++ (0,-1.05) -- cycle;
		\node at (0,0) {Proposition \ref{bipartite case}};
		\node at (0,-.39) {\footnotesize (Proof in section 4.2)}; 
		\end{scope}
		
		\begin{scope}[yshift = 6cm]
		\node at (0,.4) {\footnotesize weak Morse inequality};
		\node at (0,0) {\footnotesize (Theorem \ref{morse_fundamental})};
		\end{scope}
		
		\draw[-Stealth,double, semithick] (3.7,6.75) --++ (-.3,-1);
		\draw[-Stealth,double, semithick] (-3.7,6.75) --++ (.3,-1);
		
		\begin{scope}[yshift = 5cm, xshift = -3.3cm]
		\fill[blue, opacity=.05] (-1.7,-.35) --++ (0,0.75) --++ (3.4,0) --++ (0,-.75) -- cycle;
		\draw[semithick] (-1.7,-.35) --++ (0,0.75) --++ (3.4,0) --++ (0,-.75) -- cycle;
		\node at (0,0) {Theorem \ref{link-result} for $K_V$};
		\end{scope}
		
		\begin{scope}[yshift = 5cm, xshift = 3.3cm]
		\fill[blue, opacity=.05] (-1.8,-.35) --++ (0,0.75) --++ (3.6,0) --++ (0,-.75) -- cycle;
		\draw[semithick] (-1.8,-.35) --++ (0,0.75) --++ (3.6,0) --++ (0,-.75) -- cycle;
		\node at (0,0) {Theorem \ref{link-result} for $K_{X,Y}$};
		\end{scope}
		
		\node at (0,3.9) {\footnotesize Proposition \ref{hereditary}};
		\draw[-Stealth,double, semithick] (-1.8,4.25) --++ (1,-1);
		\draw[-Stealth,double, semithick] (1.8,4.25) --++ (-1,-1);
		
		\begin{scope}[yshift = 2.5cm]
		\fill[blue, opacity=.05] (-1.6,-.68) --++ (0,1) --++ (3.2,0) --++ (0,-1) -- cycle;
		\draw[semithick] (-1.6,-.68) --++ (0,1) --++ (3.2,0) --++ (0,-1) -- cycle;
		\node at (0,0) {Theorem \ref{link-result}};
		\node at (0,-.4) {\footnotesize (Proof in section 4)}; 
		\end{scope}
		
		\draw[-Stealth,double, semithick] (0,1.5) --++ (0,-1);
		\node at (1,1) {\footnotesize Corollary \ref{near d ler}};
		
		\begin{scope}
		\fill[blue, opacity=.05] (-1.1,-.35) --++ (0,.65) --++ (2.2,0) --++ (0,-.65) -- cycle;
		\draw[semithick] (-1.1,-.35) --++ (0,.65) --++ (2.2,0) --++ (0,-.65) -- cycle;
		\node at (0,0) {Theorem \ref{leray-result}};
		\end{scope}
		\end{tikzpicture}
		\caption{Outline of the proof of Theorem \ref{leray-result}. Here, \textit{Join} and \textit{Projection} refer to the Join lemma \ref{product} and the Projection lemma \ref{iden}.}
		\label{fig_outline}
	\end{figure}
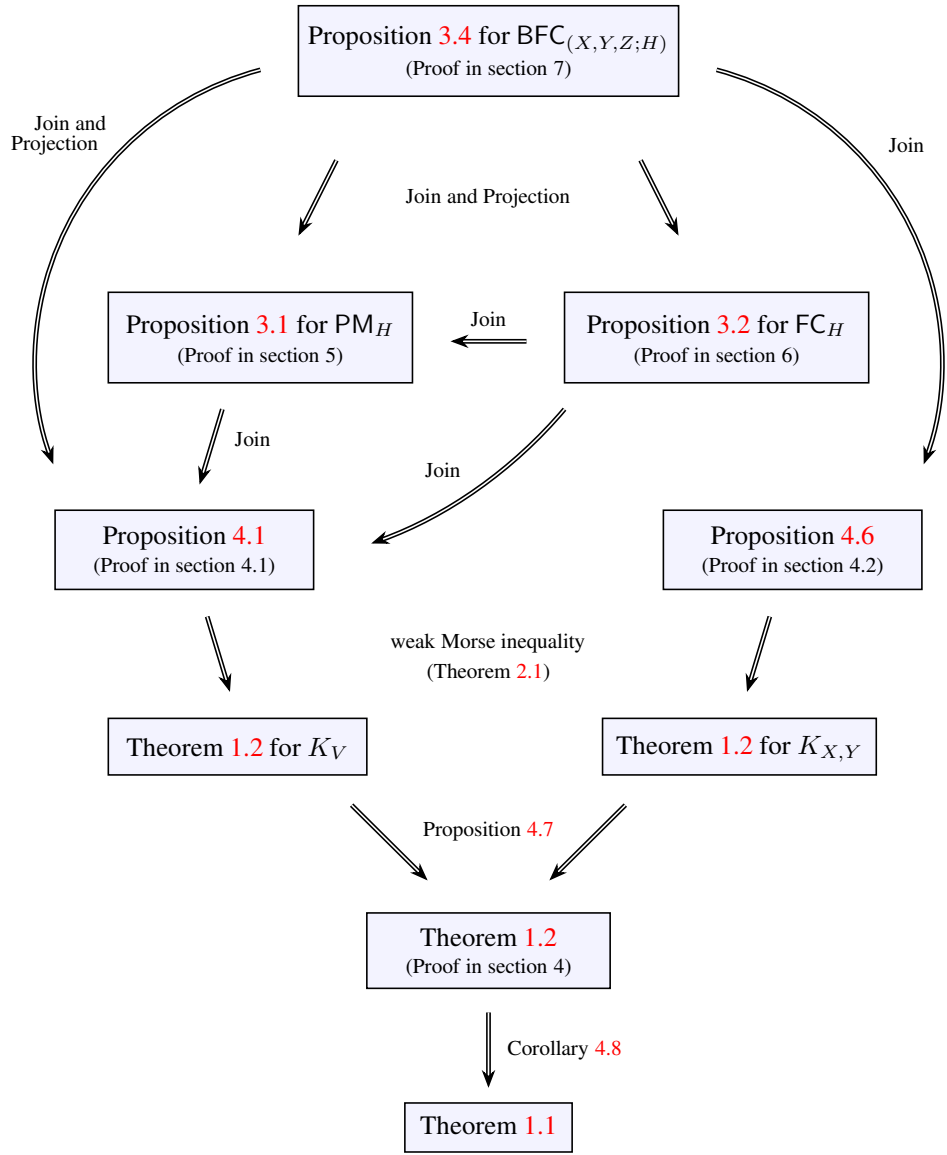

	\section{Proof of Theorem \ref{leray-result} and \ref{link-result}} \label{proof of main}
	
	In this section we temporarily assume validity of Propositions \ref{size perfect m}, \ref{size-d0} and \ref{size-yz} for families of graphs $\mathsf{PM}_H$, $\mathsf{FC}_H$ and $\mathsf{BFC}_{(X,Y,Z;H)}$ introduced in Section \ref{three special}. (These will be proved in subsequent sections.) We use these propositions and two technical lemmas of discrete Morse theory - the join lemma \ref{product} and the projection lemma \ref{iden} to obtain Theorem \ref{link-result}. We first prove Theorem \ref{link-result} for the case when $G$ is the complete graph $K_V$ in Section \ref{complete case}. This proof contains all the main ideas and in Section \ref{complete bipartite} we show how the arguments can be modified to deal with the case when $G$ is a complete bipartite graph $K_{X,Y}$. Finally we deduce the theorem for arbitrary graphs $G$ by a general argument based on simplicial homology which is given in Section \ref{homology}.
	
	\subsection{Complete graphs} \label{complete case}
	Fix a graph $H\subseteq K_V$ with $1\leq \nu(H)<k$ and 
	define the family
	\[\mathsf{F}_H = \{ G\subseteq K_V : \nu(G)< k, H\subseteq G\}. \]
	\begin{prop}\label{special case}
		There is an acyclic matching on $\mathsf{F}_H$ such that any critical set $\sigma$ satisfies 
		\[|\sigma| \leq 3k-4+|H|.\]
	\end{prop}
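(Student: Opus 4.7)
The plan is to partition $\mathsf{F}_H$ first by matching number, via the Cluster Lemma \ref{cluster} applied to the monotone map $G\mapsto\nu(G)$, and then within each fixed-$\nu$ stratum by Gallai--Edmonds decomposition, via Lemma \ref{combine-ge}. Since $H\subseteq G$ forces $\nu(H)\le\nu(G)\le k-1$, this reduces the task to exhibiting, on each non-empty fiber consisting of graphs $G\in\mathsf{F}_H$ with a fixed Gallai--Edmonds decomposition $(D_1,\dots,D_r;A;C)$, an acyclic matching whose critical sets have size at most $3k-4+|H|$.

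Fix such a decomposition. By Lemma \ref{same-ge} the edges inside $A$ and between $A$ and $C$ are completely free, while Theorem \ref{ge-decomposition} forbids $D_i$-$D_j$ and $D$-$C$ edges and pins down the remaining structure: each $G[D_i]$ must be factor-critical on $D_i$, $G[C]$ must admit a perfect matching on $C$, and contracting each $D_i$ to a single vertex must turn $G[D,A]$ into an $A$-factor-critical bipartite graph. I would then partition the ground set as $\bigsqcup_i K_{D_i}\,\cup\,K_C\,\cup\,K_{D,A}\,\cup\,(K_A\cup K_{A,C})$ and apply the Join Lemma \ref{product}, equipping each factor as follows: Proposition \ref{size-d0} on each $\FC_{H[D_i]}$; Proposition \ref{size perfect m} on $\PM_{H[C]}$; Lemma \ref{boolean} on the free boolean interval (yielding either a complete matching or a single critical face of size $|H[A]|+|H[A,C]|$); and on the $D$-to-$A$ factor, the Projection Lemma \ref{iden} applied to the partition of $K_{D,A}$ into the classes $\{\{v,a\}:v\in D_i\}$ indexed by $(i,a)$, pulling back the matching on $\BFC_{(\{D_1,\dots,D_r\},A,\emptyset;\pi(H[D,A]))}$ supplied by Proposition \ref{size-yz}. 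Taking $Z=\emptyset$ is forced because the Gallai--Edmonds condition on the contracted bipartite graph is precisely $A$-factor-criticality.

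Adding the critical-set bounds across the join factors, using the Projection Lemma size formula $|\sigma|=|\pi(\sigma)|-|\pi(\tau)|+|\tau|$ to convert the $\BFC$ estimate into $2|A|+|H[D,A]|$ (the $|\pi(H[D,A])|$ contributions cancel), and invoking the Gallai--Edmonds identity $2\nu=\sum_i(|D_i|-1)+2|A|+|C|$, the total critical set size satisfies
\[|\sigma|\;\le\;\tfrac32\textstyle{\sum_i}(|D_i|-1)+\tfrac32|C|+2|A|+|H|\;=\;3\nu-|A|+|H|\;\le\;3k-3-|A|+|H|.\]
This is at most $3k-4+|H|$ whenever $|A|\ge 1$. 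When $|A|=0$ and $\nu=k-1$ one extra unit must be saved: if $C\ne\emptyset$ the strict clause of Proposition \ref{size perfect m} supplies it; otherwise $V=\bigcup_i D_i$, so the hypothesis $\nu(H)\ge 1$ forces $H$ to have an edge inside some $D_i$, activating the strict clause of Proposition \ref{size-d0}.

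The main obstacle is the bipartite $D$-to-$A$ piece: the Gallai--Edmonds condition (3) is not intrinsic factor-criticality of $G[D,A]$ itself but rather of its quotient by the components of $D$, so $\BFC$ cannot be invoked directly on $G[D,A]$. Setting up the Projection Lemma along the partition of $K_{D,A}$ by the pairs $(D_i,a)$ is the natural fix, but one must verify that the image of the fiber under $\pi$ is exactly $\BFC_{(\{D_1,\dots,D_r\},A,\emptyset;\pi(H[D,A]))}$ and that the size formula telescopes with the $\BFC$ bound to yield exactly $2|A|+|H[D,A]|$. The resulting estimate $3\nu-|A|+|H|$ meets the target $3k-4+|H|$ with zero slack, which is what compels both the $Z=\emptyset$ choice (a non-trivial $Z$ would cost an extra $|Z|$ and violate the bound) and the strict-case refinement in the $|A|=0$ scenario.
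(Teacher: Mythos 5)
Your proposal is correct, but it is organized differently from the paper's proof. The paper first performs a vertex reduction: it picks a vertex $v$ of minimum $H$-degree, splits $\mathsf{F}_H$ into $\mathsf{F}_0$ and $\mathsf{F}_1$ according to whether an edge of the star $S=K_{W,\{v\}}$ can be added (Claim \ref{step0}), and then characterizes the surviving family as the graphs on $V\setminus\{v\}$ with matching number exactly $k-1$ and $D(G)=W$ (Claim \ref{d(g)=m}); only then does it stratify by Gallai--Edmonds data. You skip this normalization entirely and stratify $\mathsf{F}_H$ directly, first by $\nu$ via the Cluster Lemma \ref{cluster} and then by Gallai--Edmonds decomposition via Lemma \ref{combine-ge}. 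From that point on the two arguments coincide: your join decomposition of a stratum is the analogue of Claim \ref{join structure}, your Projection Lemma setup on $K_{D,A}$ with $\mathsf{Q}=\BFC_{([r],A,\emptyset;\emptyset)}$ and the telescoping size formula giving $2|A|+|H[D,A]|$ is exactly Claim \ref{first-projection}, the per-stratum total $3\nu-|A|+|H|$ is the paper's $3k-3-|A|+|H'|$, and your rescue of the $|A|=0$, $\nu=k-1$ strata via the strict clauses of Propositions \ref{size perfect m} and \ref{size-d0} is precisely how the paper handles its $A\cup C=\emptyset$ case (there the needed edge of $H'$ is guaranteed by the minimum-degree choice of $v$; in your version it comes directly from $\nu(H)\geq 1$). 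What each route buys: yours is leaner, avoiding Claims \ref{step0} and \ref{d(g)=m}, but it shifts the burden onto the converse inclusion of your join characterization --- you must verify, for every stratum and every $\nu\leq k-1$, that a graph assembled from the factors really has the prescribed decomposition and matching number $\nu$ (the counting argument plus the explicit maximum matchings built from $A$-factor-criticality of the contraction, factor-criticality of each $G[D_i]$, and the perfect matching on $C$, as in the second half of the proof of Claim \ref{join structure}); your write-up asserts this rather than proving it, so it should be spelled out. The paper's reduction, by contrast, normalizes every stratum to $\nu=k-1$ with $D(G)=W$ and keeps the argument structurally parallel to the $e_0$-reductions used for $\PM_H$, $\FC_H$ and $\BFC_{(X,Y,Z;H)}$ in Sections \ref{section_size perfect m}--\ref{section_size-yz}, at the cost of the extra preparatory claims. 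Minor points: your ground-set partition should also include the (forced-empty) classes of $D_i$-$D_j$ and $D$-$C$ edges, and the boolean factor must be the interval $\{\sigma: H[A]\cup H[A,C]\subseteq\sigma\subseteq K_A\cup K_{A,C}\}$ so that $H$-edges are never removed; both are cosmetic and handled implicitly by your use of Lemmas \ref{boolean} and \ref{same-ge}.
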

	
	Let us denote the link of $H$ in $\mathsf{NM}_k(K_V)$ by $\mathsf{L}_H$.
	Note that the face poset of $\mathsf{L}_H$ is isomorphic to $\mathsf{F}_H$ (where the members of $\mathsf{F}_H$ are ordered by inclusion). In fact, we have $\mathsf{F}_H = \mathsf{L}_H*\{H\}$, that is, every member of $\mathsf{F}_H$ can be obtained by adding $H$ to a member of $\mathsf{L}_H$, and every member of the link $\mathsf{L}_H$ can be obtained by removing $H$ from a member of $\mathsf{F}_H$. Therefore the acyclic matching in Proposition \ref{special case} can be transformed into an acyclic mathcing on $\mathsf{L}_H$ where any critical set $\sigma$ satisfies $|\sigma|\leq 3k-4$,
	and by Theorem \ref{morse_fundamental} it follows that $\mathsf{L}_H$ has vanishing homology in all dimensions $d\geq 3k-4$.
	
	\medskip
	
	We now start our proof of Proposition \ref{special case}. 
	The strategy is to decompose the family $\mathsf{F}_H$ into simpler parts which can be expressed as joins of the families defined in section \ref{three special}. Propositions \ref{size perfect m}, \ref{size-d0}, \ref{size-yz} together with 
	Lemma \ref{product} allow us to obtain an acyclic matching on $\mathsf{F}_H$ with the desired bound on the size of the critical sets.
	
	\subsection*{First reduction}
	We start by observing that when $|V|<2k$, then the condition $\nu(G)<k$ is satisfied for any $G\subseteq K_V$. This means that $\mathsf{F}_H = \{G\subseteq K_V : H\subseteq G\}$, and we can find either a complete acyclic matching, or an acyclic matching with a single critical set of size $|H|$ (by Lemma \ref{boolean}). In either case we are done, so from here on we assume that $|V|\geq 2k$. This also implies that $H$ is a proper subgraph of $K_V$. 
	
	Without loss of generality, let $v$ be the vertex of minimum degree in $H$, that is, \[\deg_{H}(v)=\min\{\deg_H(w):w\in V\}.\] Note that the degree of $v$ in $H$ could equal zero. Let $V'=V\setminus \{v\}$. If we set $W = (V'\setminus N_H(v))$, 
	then the following properties are satisfied:
	\begin{enumerate}[(i)]
		\item $W\neq\emptyset$, and
		\item \label{non-triv-H} $H$ has an edge not incident with $v$.
	\end{enumerate}
	Let $S$ denote the set of edges in $K_V$ which are incident to $v$ but do not belong to $H$, that is, $S = K_{W, \{v\}}$.
	For every $G\in \mathsf{F}_H$ define $S_G\subseteq S$ as
	\[S_G=\{e\in S: G+e \in \mathsf{F}_H\}.\]
	Now define subfamilies
	\[\begin{array}{lcr}
	\mathsf{F}_0 & = & \{G\in \mathsf{F}_H : S_G \ne \emptyset \}, \\
	\mathsf{F}_1 & = & \{G\in \mathsf{F}_H : S_G = \emptyset \}.
	\end{array}\]
	Thus we have a partition $\mathsf{F}_H = \mathsf{F}_0 \cup \mathsf{F}_1$.
	
	\begin{claim} \label{step0}
		There is a complete acyclic matching $\MM_0$ on $\mathsf{F}_0$. Furthermore, if $\MM_1$ is any acyclic matching on $\mathsf{F}_1$, then $\MM=\MM_0 \cup \MM_1$ is an acyclic matching on $\mathsf{F}_H$.
	\end{claim}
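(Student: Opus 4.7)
The plan is to define $\mathcal{M}_0$ by pairing, for each $G \in \mathsf{F}_0$, the two sets $G \setminus \{e(G)\}$ and $G \cup \{e(G)\}$, where $e(G) := \max S_G$ under some fixed total order on $S$. Completeness is automatic since $S_G \neq \emptyset$ for $G \in \mathsf{F}_0$. The main obstacle is to verify the involution property $e(G \triangle e(G)) = e(G)$; write $e := e(G)$ and $G' := G \triangle e$. One has $e \in S_{G'}$ immediately from $G' \triangle e = G \in \mathsf{F}_H$, so it suffices to rule out any $f \in S$ with $f > e$ and $f \in S_{G'}$. If $e \notin G$, then $G' = G + e \supseteq G$, and the monotonicity $\nu(G + f) \leq \nu(G' + f) < k$ gives $S_{G'} \subseteq S_G$, hence $\max S_{G'} \leq e$. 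The delicate case is $e \in G$ with $G' = G - e$: if some $f > e$ were to lie in $S_{G'} \setminus S_G$, the assumption $f \notin S_G$ produces a matching $M$ of size $k$ in $G + f$ which must contain $f$, and since $e, f \in S = K_{W, \{v\}}$ both contain the vertex $v$, we conclude $e \notin M$. Hence $M \subseteq (G - e) + f = G' + f$, forcing $\nu(G' + f) \geq k$ and contradicting $f \in S_{G'}$.

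For acyclicity, suppose a directed cycle $(\sigma_0, \tau_0, \sigma_1, \tau_1, \ldots, \sigma_{t-1}, \tau_{t-1})$ exists as in Lemma \ref{cycle}, with $\tau_i = \sigma_i + e_i$ for $e_i := e(\sigma_i)$ and $\sigma_{i+1} = \tau_i - f_i$ for some $f_i \in \sigma_i \setminus \{e_i\}$. Since $\sigma_i \cap S \subseteq S_{\sigma_i}$ and $e_i = \max S_{\sigma_i} \notin \sigma_i$, every edge in $\sigma_i \cap S$ is strictly smaller than $e_i$, so a direct case analysis on whether $f_i \in S$ yields $\max(\sigma_{i+1} \cap S) = e_i$. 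Since $e_{i+1} = \max S_{\sigma_{i+1}} \notin \sigma_{i+1}$ while $e_i \in \sigma_{i+1} \cap S \subseteq S_{\sigma_{i+1}}$, this forces $e_{i+1} > e_i$, and traversing the cycle gives $e_0 < e_1 < \cdots < e_{t-1} < e_0$, absurd.

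Finally, to conclude that $\mathcal{M}_0 \cup \mathcal{M}_1$ is acyclic on $\mathsf{F}_H$ for any acyclic matching $\mathcal{M}_1$ on $\mathsf{F}_1$, invoke the cluster lemma (Lemma \ref{cluster}) with the map $\varphi : \mathsf{F}_H \to \{0 < 1\}$ defined by $\varphi(G) = 0$ iff $G \in \mathsf{F}_0$. Monotonicity amounts to the observation that $G \subseteq G'$ implies $S_{G'} \subseteq S_G$ (once more from $\nu(G + e) \leq \nu(G' + e)$), so $\mathsf{F}_0$ is downward closed in $\mathsf{F}_H$; the cluster lemma then delivers the acyclicity of the union.
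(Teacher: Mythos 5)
Your proposal is correct, but it reaches the claim by a genuinely different route than the paper. For the complete acyclic matching on $\mathsf{F}_0$, the paper does not construct pairs by hand: it fibers $\mathsf{F}_0$ over the monotone map $G\mapsto G\setminus S$, shows each fiber equals the full Boolean family $\{G\cup S' : S'\subseteq S_G\}$ (this is where the key fact enters that all edges of $S$ share the vertex $v$, so a matching of size $k$ in $G\cup S'$ would use exactly one edge of $S'$), and then gets a complete acyclic matching on each fiber from Lemma \ref{boolean} and glues them with the Cluster Lemma \ref{cluster}; you instead toggle the maximal element of $S_G$ and verify the involution property and acyclicity directly, using the same ``all edges of $S$ meet at $v$'' fact inside the involution check and a strictly increasing label $e_0<e_1<\cdots<e_0$ along any putative cycle. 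For the union $\MM_0\cup\MM_1$, the roles are reversed: the paper argues directly on a cycle (a pair from $\MM_0$ adds an edge of $S$, which then stays in the next $\sigma$ and forces all subsequent pairs into $\MM_0$, contradicting acyclicity of $\MM_0$), whereas you observe that $S_{G'}\subseteq S_G$ for $G\subseteq G'$ makes $\mathsf{F}_0$ a down-set of $\mathsf{F}_H$ and invoke Lemma \ref{cluster} with the two-element chain. Both arguments are sound; the paper's fiberwise approach outsources the involution and acyclicity bookkeeping to Lemmas \ref{boolean} and \ref{cluster}, while your explicit construction is more self-contained, and your down-set observation gives a slightly cleaner and reusable justification of the union step than the paper's cycle-chasing.
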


	\begin{proof}
		We first prove the existence of a complete acyclic matching on $\mathsf{F}_0$. Let $\mathsf{Q} = \{ (G\setminus S): G \in \mathsf{F}_0\}$, which is clearly a subfamily of $\mathsf{F}_0$. 
		Consider the map $\varphi: \mathsf{F}_0 \to \mathsf{Q}$ defined as $\varphi(G)=(G\setminus S)$. Note that $f$ is monotone with respect to inclusion. For every $G\in \mathsf{Q}$ we have $\varphi^{-1}(G) = \{G\cup S' : S'\subseteq S_G\}$; If not, there would exists a subset $S'\subseteq S_G$ such that $\nu(G\cup S')= k$. Then, a maximum matching of size $k$ in $G\cup S'$ should use exactly one edge $e$ in $S'$, which implies that $\nu(G+e)=k$. This contradicts the assumption that $e \in S_G$. 
		Hence, for any edge $e_0 \in S_G$, we have
		$$\varphi^{-1}(G)=\{\sigma: \sigma+e_0, \sigma-e_0  \in\varphi^{-1}(G)\},$$	
		and by Lemma \ref{boolean}
		we can find a complete acyclic matching on $\varphi^{-1}(G)$ for every $G\in \mathsf{Q}$. By this and  Lemma \ref{cluster}, there is a complete acyclic matching $\MM_0$ on $\mathsf{F}_0$.
		
		Now consider an (arbitrary) acyclic matching  $\MM_1$ on $\mathsf{F}_1$. 
		Set $\MM=\MM_0\cup \MM_1$, and for contradiction assume there is a directed cycle
		\[(\sigma_1, \tau_1, \dots, \sigma_t,\tau_t),\] which satisfies the conditions of Lemma \ref{cycle}. Consider the case that $(\sigma_i,\tau_i)\in \MM_0$ for some $i$. This means that $\tau_i\setminus \sigma_i=\{e_i \} \subseteq S$, and therefore $e_i \in \sigma_{i+1}$ which implies that $(\sigma_{i+1}, \tau_{i+1}) \in \MM_0$. Repeating this argument shows that $(\sigma_j, \tau_j) \in \MM_0$ for every $j$, which is impossible since $\MM_0$ is an acyclic matching. Therefore, it must be the case that  $(\sigma_j, \tau_j) \in \MM_1$ for every $j$, contradicting the assumption that $\MM_1$ is an acyclic matching.
	\end{proof}

	\subsection*{Second reduction}
	By Claim \ref{step0} our problem has been reduced to finding an acyclic matching on the family $\mathsf{F_1}$. Note that for any $G\in \mathsf{F}_1$, the neighborhood $N_G(v) = V'\setminus W = N_H(v)$. Therefore every graph $G\in \mathsf{F}_1$ is uniquely determined by its induced subgraph $G[V']$.  
	Consequently, we can further reduce our problem to finding an acyclic matching on the family \[\mathsf{F}=\{G[V']  : G\in \mathsf{F}_1\},\] since this will uniquely determine an acyclic matching on $\mathsf{F}_1$. 
	
	\medskip
	
	The family $\mathsf{F}$ has a relatively simple characterization given in the claim below. Note that every graph $G\in\mathsf{F}$ contains the subgraph $H' = H[V']$ (which contains at least one edge by property (\ref{non-triv-H}) above). Also, recall that $D(G) = D_1\cup \cdots \cup D_r$ where $(D_1, \dots, D_r; A; C)$ is the Gallai--Edmonds decomposition of $G$.

	\begin{claim}\label{d(g)=m}
		The family $\mathsf{F}$ consists of all graphs $G$ on the vertex set $V'$ where $H'\subseteq G$, $\nu(G)=k-1$, and $D(G)=W$.
	\end{claim}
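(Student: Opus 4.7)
The plan is to prove the claim by verifying both inclusions of the set equality, exploiting a natural bijection between $\mathsf{F}_1$ and the prescribed family on $V'$. As the paragraph preceding the claim notes, every $\widetilde{G}\in\mathsf{F}_1$ satisfies $N_{\widetilde{G}}(v)=N_H(v)=V'\setminus W$, because $H\subseteq\widetilde{G}$ forces $N_H(v)\subseteq N_{\widetilde{G}}(v)$, while $S_{\widetilde{G}}=\emptyset$ forbids every edge $vw$ with $w\in W$ (otherwise $\widetilde{G}+vw=\widetilde{G}\in\mathsf{F}_H$ would place $vw\in S_{\widetilde{G}}$). Consequently the map $\widetilde{G}\mapsto\widetilde{G}[V']$ is a bijection with explicit inverse $G\mapsto G\cup\{vu:u\in N_H(v)\}$, and the task reduces to identifying the image.

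For the forward inclusion, fix $\widetilde{G}\in\mathsf{F}_1$ and set $G=\widetilde{G}[V']$. The containment $H'\subseteq G$ is immediate. For any $w\in W$, the condition $S_{\widetilde{G}}=\emptyset$ forces $\nu(\widetilde{G}+vw)\geq k$ while $\nu(\widetilde{G})<k$, so any maximum matching of $\widetilde{G}+vw$ of size $k$ must use the edge $vw$; deleting this edge gives a matching of size $k-1$ in $\widetilde{G}-v-w$, which lies inside $G-w$. This simultaneously yields $\nu(G)=\nu(\widetilde{G})=k-1$ and shows $w\in D(G)$, so $W\subseteq D(G)$. For the reverse inclusion, suppose $u\in N_H(v)$ were in $D(G)$: then a matching in $G$ of size $k-1$ avoiding $u$, together with the edge $vu\in H\subseteq\widetilde{G}$, would give a matching of size $k$ in $\widetilde{G}$, contradicting $\nu(\widetilde{G})<k$. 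Hence $D(G)=W$.

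Conversely, let $G$ be any graph on $V'$ satisfying $H'\subseteq G$, $\nu(G)=k-1$, and $D(G)=W$, and set $\widetilde{G}:=G\cup\{vu:u\in N_H(v)\}$. Then $\widetilde{G}[V']=G$, $H\subseteq\widetilde{G}$, and $\widetilde{G}\cap S=\emptyset$ by construction. To verify $\nu(\widetilde{G})<k$, consider a maximum matching $M$ of $\widetilde{G}$: if $v$ is unsaturated by $M$, then $M$ is a matching in $G$ of size at most $k-1$; if $M$ contains an edge $vu$ with $u\in N_H(v)=V'\setminus W=V'\setminus D(G)$, then $M\setminus\{vu\}$ is a matching in $G-u$, and since $u\notin D(G)$ the standard identity gives $\nu(G-u)=\nu(G)-1=k-2$, so $|M|\leq k-1$. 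To verify $S_{\widetilde{G}}=\emptyset$, observe that for every $w\in W=D(G)$ we can pick a matching of size $k-1$ in $G-w$ and extend it by $vw$ to obtain a matching of size $k$ in $\widetilde{G}+vw$, proving $\nu(\widetilde{G}+vw)\geq k$. The principal obstacle is pinning down the equality $D(G)=W$ exactly: its two inclusions appeal respectively to the conditions $S_{\widetilde{G}}=\emptyset$ and $H\subseteq\widetilde{G}$, which are precisely the two features that define $\mathsf{F}_1$ and $\mathsf{F}_H$, so the characterization is in some sense forced by the setup.
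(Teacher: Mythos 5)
Your proposal is correct and follows essentially the same route as the paper: establish $N_{\widetilde G}(v)=N_H(v)$ so that restriction to $V'$ is a bijection, use $S_{\widetilde G}=\emptyset$ to produce size-$(k-1)$ matchings missing each $w\in W$ (giving $\nu(G)=k-1$ and $W\subseteq D(G)$) and $H\subseteq\widetilde G$ to rule out $D(G)\cap N_H(v)$, then reverse the construction and check $\nu(\widetilde G)<k$ and $S_{\widetilde G}=\emptyset$ from $D(G)=W$. The only cosmetic difference is that you verify $\nu(\widetilde G)<k$ directly via $\nu(G-u)=\nu(G)-1$ for $u\notin D(G)$, where the paper argues by contradiction; the content is the same.
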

	
	\begin{proof}
		We already noted that $H'\subseteq G$ for every $G\in \mathsf{F}$. To prove the rest of the claim, first recall that $S = K_{W, \{v\}}$ and let $\hat{S} = K_{(V'\setminus W), \{v\}}$. 
		
		Consider a graph $G\in \mathsf{F}$ and let $\hat{G} = G\cup \hat{S}$. Note that $\hat{G} \in \mathsf{F}_1$ by definition. Hence, $\nu(G) \leq \nu(\hat{G})\leq k-1$, and for every $e\in S$, $\nu(\hat{G}+e) \geq k$. This implies that $\nu(\hat{G}+e)=k$ since adding a single edge increases the matching number by at most one. In particular, any maximum matching $M_e$ of $\hat{G}+e$ must contain the edge $e$ which is incident to $v$, and therefore $M_e \setminus \{e\}\subseteq G$. So we can conclude that $\nu(G)=k-1$. Moreover, if $e=uv$ for some $u\in W$, then the maximum matching $M_e\setminus \{e\}$ in $G$ misses the vertex $u$. This implies that $W\subseteq D(G)$.
		Now we show that $D(G)\subseteq W$. If not, there is some $u\in D(G)\cap (V'\setminus W)$ and a maximum matching $M_u$ in $G$ of size $k-1$ which does not cover the vertex $u$. But this would mean $M_u + uv$ is a matching in $\hat{G}$ of size $k$, which is a contradiction. Thus we have shown that every graph $G\in \mathsf{F}$ satisfies the conditions of the claim. 
		
		For the other direction, suppose $G$ is a graph satisfying the conditions of the claim. We will show that $\hat{G}=G\cup \hat{S} \in \mathsf{F}_1$. If $\nu(\hat{G})\geq k$, then there is a maximum matching in $\hat{G}$ which uses an edge from $\hat{S}$. Deleting this edge we find a maximum matching in $G$ which misses vertex $u\in (V'\setminus W)$. This is impossible by the condition $D(G) = W$. So we have $\nu(\hat{G})<k$. 
		
		For any edge $e = uv\in S$, let $M_j$ be a maximum matching which misses vertex $u\in W = D(G)$. Then, $M_j + e$ is a matching of size $k$ in $\hat{G} + e$. This shows that $S_{\hat{G}} = \emptyset$, and so by definition we have $\hat{G}\in \mathsf{F}_1$. 
	\end{proof}
	
	\subsection*{The join structure}
	Since all graphs in $\mathsf{F}$ have the same matching number, we can use Lemma \ref{combine-ge} to further reduce our problem to finding an acyclic matching on each non-empty subfamily $\mathsf{F}_{(D_1, \dots, D_r; A; C)} \subseteq \mathsf{F}$. We fix such a subfamily and let $D = D_1\cup \cdots \cup D_r$. The next step is to give a join structure on  $\mathsf{F}_{(D_1, \dots, D_r; A; C)}$.
	
	\medskip

	We define a projection map from the complete bipartite graph $K_{D, A}$. Define an index set \[I = K_{[r], A} = \{(t,a) : t\in [r], a\in A\}\] and partition the edges of $K_{D,A}$ as 
	\[K_{D,A} = \textstyle{\bigcup}_{i\in I} E_i, \]
	where $E_{(t, a)} = K_{D_t, \{a\}}$. Let $\pi: 2^{K_{D,A}} \to 2^{K_{[r], A}}$ be the corresponding  projection map,
	and define the family
	\[\mathsf{Proj}_{H'[D,A]} = \{G\subseteq K_{D,A} : \pi(G) \text{ is $A$-factor critical}, H'[D,A]\subseteq G \}. \]
	
	\begin{figure}
		\centering
		
		\begin{tikzpicture}
		
		\begin{scope}
		
		\coordinate (d1) at (-2.8,2);
		\coordinate (d2) at (-1.4,2);
		\coordinate (d3) at (0,2);
		\coordinate (d4) at (1.4,2);
		\coordinate (d5) at (2.8,2);
		
		\coordinate (d11) at ($(d1)+(000+00:.3cm)$);
		\coordinate (d12) at ($(d1)+(120+00:.3cm)$);
		\coordinate (d13) at ($(d1)+(240+00:.3cm)$);
		
		\coordinate (d21) at ($(d2)+(000+40:.3cm)$);
		\coordinate (d22) at ($(d2)+(120+40:.3cm)$);
		\coordinate (d23) at ($(d2)+(240+40:.3cm)$);
		
		\coordinate (d31) at ($(d3)+(000+72:.3cm)$);
		\coordinate (d32) at ($(d3)+(120+72:.3cm)$);
		\coordinate (d33) at ($(d3)+(240+72:.3cm)$);
		
		\coordinate (d41) at ($(d5)+(000+41:.3cm)$);
		\coordinate (d42) at ($(d5)+(120+41:.3cm)$);
		\coordinate (d43) at ($(d5)+(240+41:.3cm)$);
		
		\coordinate (a1) at (-1.5,0.5);
		\coordinate (a2) at (-0.5,0.5);
		\coordinate (a3) at (0.5,0.5);
		\coordinate (a4) at (1.5,0.5);
		
		\coordinate (c1) at (-1.5,-.75);
		\coordinate (c2) at (-0.5,-.75);
		\coordinate (c3) at (0.5,-.75);
		\coordinate (c4) at (1.5,-.75);
		
		\coordinate (v1) at (5.6,1.8);
		\coordinate (v2) at (6.2,1.8);
		\coordinate (v3) at (6.8,1.8);
		\coordinate (v4) at (7.4,1.8);
		\coordinate (v5) at (8.0,1.8);
		
		\coordinate (w1) at (5.9,0.8);
		\coordinate (w2) at (6.5,0.8);
		\coordinate (w3) at (7.1,0.8);
		\coordinate (w4) at (7.7,0.8);
		
		\draw [orange, semithick] 
		(d11) -- (d12) -- (d13) --cycle
		(d21) -- (d22) -- (d23) --cycle
		(d31) -- (d32) -- (d33) --cycle
		(d41) -- (d42) -- (d43) --cycle;
		
		\draw [blue, semithick] 
		(a1) -- (d13)
		(a1) -- (d32)
		(a1) -- (d33)
		(a2) -- (d33)
		(a2) -- (d43)
		(a4) -- (d23)
		(a4) -- (d42)
		
		(w1) -- (v1)
		(w1) -- (v3)
		(w2) -- (v3)
		(w2) -- (v5)
		(w4) -- (v2)
		(w4) -- (v5);
		
		\draw [red, semithick]
		(a1) -- (a2) .. controls ($(a3) + (0,.2)$) .. (a4)
		(a1) .. controls ($(a2) + (0,-.3)$) and ($(a3) + (0,-.3)$) .. (a4)
		
		(a1) -- (c1) (a1) -- (c2) (a1) -- (c3) (a1) -- (c4)
		(a2) -- (c1) (a2) -- (c2) (a2) -- (c3) (a2) -- (c4)
		(a4) -- (c1) (a4) -- (c2) (a4) -- (c3) (a4) -- (c4);
		
		\draw[green!50!black, semithick]
		(c1) -- (c2) -- (c3) -- (c4) .. controls ($(c3)+(0,-.4)$) and ($(c2)+(0,-.4)$) .. (c1);
		
		\filldraw[black] 
		(d11) circle (1pt)
		(d12) circle (1pt)
		(d13) circle (1pt)
		
		(d21) circle (1pt)
		(d22) circle (1pt)
		(d23) circle (1pt)
		
		(d31) circle (1pt)
		(d32) circle (1pt)
		(d33) circle (1pt)
		
		(d41) circle (1pt)
		(d42) circle (1pt)
		(d43) circle (1pt)
		
		(a1) circle (1pt)
		(a2) circle (1pt)
		(a4) circle (1pt)
		
		(c1) circle (1pt)
		(c2) circle (1pt)
		(c3) circle (1pt)
		(c4) circle (1pt)
		
		(v1) circle (1pt)
		(v2) circle (1pt)
		(v3) circle (1pt)
		(v5) circle (1pt)
		
		(w1) circle (1pt)
		(w2) circle (1pt)
		(w4) circle (1pt);
		
		\node[above] at ($(d1)+(0,0.3)$) {\footnotesize $\FC_{H[D_1]}$};
		\node[above] at ($(d2)+(0,0.3)$) {\footnotesize $\FC_{H[D_2]}$};
		\node[above] at ($(d3)+(0,0.3)$) {\footnotesize $\FC_{H[D_3]}$};
		\node[above] at ($(d5)+(0,0.3)$) {\footnotesize $\FC_{H[D_r]}$}; 
		\node at (d4) {$\dots$};
		\node at (a3) {$\dots$};
		\node at (-3.6,2) {$D:$};
		\node at (-3.6,0.5) {$A:$};
		\node at (-3.6,-0.75) {$C:$};
		\fill [white, opacity =.75] (0,.9) --++ (-.75,0) --++ (0,.4) --++ (1.5,0) --++ (0,-.4) -- cycle;
		\node at (0.0,1.1) {\footnotesize $\mathsf{Proj}_{H'[D,A]}$};
		\node at (2.7,-0.1) {\footnotesize $\subseteq H'$};
		\node at (2.7,-0.75) {\footnotesize $\PM_{H'[C]}$};
		\node at (4.46,1.5) {\footnotesize $\pi$};
		\draw[-Latex, semithick] (4,1.3) -- (5,1.3);
		\node at (v4) {$\dots$};
		\node at (w3) {$\dots$};
		\node at ($(w2)+(.35,-.4)$) {\footnotesize $\BFC_{([r], A, \emptyset; \pi(H'[D,A]))}$};
		\end{scope}
		\end{tikzpicture}    
		\caption{The join structure in Claim \ref{join structure}.}
		\label{fig_claim4.4}
	\end{figure}
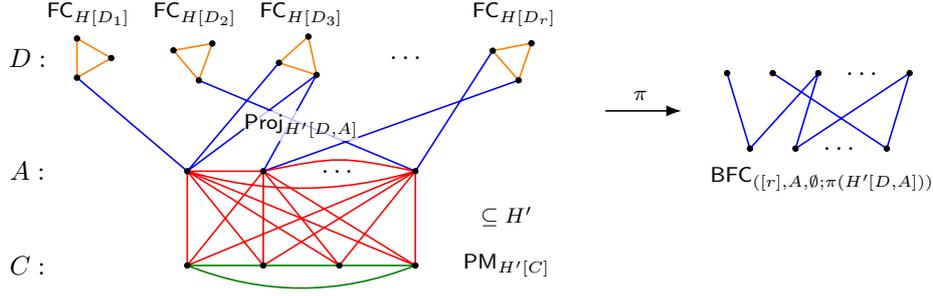
	
	\begin{claim} \label{join structure}
		If $H'[A] = K_A$ and $H'[A,C] = K_{A,C}$, then  \[\mathsf{F}_{(D_1, \dots, D_r;A;C)} = \mathsf{FC}_{H'[D_1]} * \cdots * \mathsf{FC}_{H'[D_r]} * \mathsf{Proj}_{H'[D,A]} * \mathsf{PM}_{H'[C]} * \{K_A\} * \{K_{A,C}\}.\]
	\end{claim}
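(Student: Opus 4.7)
The plan is to verify the set equality via the edge partition of $K_{V'}$ induced by $V' = D \cup A \cup C$ and $D = \bigcup_i D_i$. Any graph belonging to the join on the right-hand side decomposes as a disjoint union of the pieces $G[D_1], \ldots, G[D_r], G[D,A], G[A], G[A,C], G[C]$, and the join structure implicitly forbids $G$-edges between distinct $D_i$ or between $D$ and $C$, since no factor of the join accommodates such edges.

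For the forward inclusion ($\subseteq$), fix $G \in \mathsf{F}_{(D_1,\dots,D_r;A;C)}$. By definition of the Gallai--Edmonds decomposition, the $D_i$ are the connected components of $G[D]$ (so $G$ has no edges between distinct $D_i$) and $A = N_G(D)$ (so $G$ has no edges between $D$ and $C$). Part (1) of Theorem \ref{ge-decomposition} gives $G[D_i] \in \FC_{H'[D_i]}$, part (2) gives $G[C] \in \PM_{H'[C]}$, and part (3) translated through $\pi$ is exactly the $A$-factor-criticality of $\pi(G[D,A])$, so $G[D,A] \in \mathsf{Proj}_{H'[D,A]}$. The hypotheses $H'[A] = K_A$ and $H'[A,C] = K_{A,C}$ combined with $G \subseteq K_{V'}$ force $G[A] = K_A$ and $G[A,C] = K_{A,C}$, completing this direction.

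For the reverse inclusion ($\supseteq$), let $G$ be in the join. Non-emptiness of the subfamily $\mathsf{F}_{(D_1,\dots,D_r;A;C)}$ furnishes a reference graph containing $H'$ and respecting the same forbidden-edge conditions, so $H'$ decomposes along the partition and embeds piecewise into $G$. To identify the GE decomposition of $G$, I would first show, for each $d \in D_i$, how to build a matching of $G$ missing $d$: combine a perfect matching of $G[D_i] - d$ (using factor-criticality), the lift to $G[D \setminus D_i, A]$ of an $A$-covering matching in $\pi(G[D,A]) - i$ (available by $A$-factor-criticality of $\pi(G[D,A])$), perfect matchings on $G[D_j]$ minus the matched vertex for $j \neq i$, and a perfect matching of $G[C]$. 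A direct counting argument---equivalent to the Tutte--Berge formula applied with $S = A$, using that the odd components of $G - A$ are precisely the $G[D_i]$ and that the $G[C]$-components are all even---supplies the upper bound $\nu(G) \leq (|V'| + |A| - r)/2$, which the construction attains. This yields $\nu(G) = k - 1$ (matching the value in $\mathsf{F}$) and $D \subseteq D(G)$. Tightness in these inequalities simultaneously forces every maximum matching to cover $A \cup C$, so $D(G) = D$; then $A(G) = A$ (every $a \in A$ has a $D$-neighbor by $A$-factor-criticality, and $D$ has no $C$-neighbors) and $C(G) = C$, while factor-criticality together with the absence of inter-$D_i$ edges identifies the $G[D_i]$ as the components of $G[D]$.

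The main obstacle is the reverse inclusion, specifically confirming that the prescribed partition $(D_1, \ldots, D_r; A; C)$ really equals the canonical GE decomposition of an arbitrary graph in the join. The crux is the Tutte--Berge upper bound on $\nu(G)$; once it is matched by the explicit maximum matching constructed above, tightness propagates through every estimate and pins down $D(G)$, $A(G)$, $C(G)$ in one stroke.
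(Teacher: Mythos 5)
Your proposal is correct and follows essentially the same route as the paper: the forward inclusion via parts (1)--(3) of the Gallai--Edmonds theorem (with (3) giving $A$-factor criticality of $\pi(G[D,A])$), and the reverse inclusion by constructing, for each $d\in D$, a matching of size $k-1$ missing $d$ and matching it against the counting bound $\nu(G)\le\frac12(|V'|+|A|-r)$, whose tightness forces every maximum matching to cover $A\cup C$ and pins down the decomposition. Your appeal to Tutte--Berge with $S=A$ is just a named version of the paper's direct parity count, so the arguments coincide.
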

	\begin{proof}
		(See Figure \ref{fig_claim4.4} for an illustration.) We first show  that $\mathsf{F}_{(D_1, \dots, D_r;A;C)}$ is included in the join.
		In other words, our goal is to show that any  graph $G\in \mathsf{F}_{(D_1 ,\dots,D_r ;A;C)}$ is the union of 
		\begin{itemize}
			\item a factor critical graph on each $D_i$ which contains $H'[D_i]$,
			\item a graph on $C$ with a perfect matching which  contains $H'[C]$,
			\item a bipartite graph in $\mathsf{Proj}_{H'[D,A]}$ connecting vertices in $D$ to vertices in $A$,
			\item a complete graph on $A$, and
			\item a complete bipartite graph connecting vertices in $A$ to vertices in $C$.
		\end{itemize}
		By definition, $G$ has Gallai--Edmonds decomposition $(D_1, \dots, D_r ; A; C)$ and satisfies the properties stated in Claim \ref{d(g)=m}. In particular, $G$ contains $H'$, and so by the assumptions that $H'[A]$ and $H'[A,C]$ are complete, the last two properties above are automatically satisfied. The first three properties are simple consequences of the Gallai--Edmonds decompostion (Theorem \ref{ge-decomposition}). Namely, 
		$G[D_i]$ is factor critical for every $i\in [r]$, $G[C]$ has a perfect matching, and for every $i\in [r]$ there is a matching $M_i$ in $G[D\setminus D_i, A]$ covering $A$ such that $|N_{M_i}(A) \cap D_j|\leq 1$ for every $j\in [r]$. The last conclusion is what guarantees that $G[D,A] \in \mathsf{Proj}_{H'[D,A]}$. This shows that $\mathsf{F}_{(D_1, \dots, D_r;A;C)}$ is contained in the join.
		
		\smallskip
		
		It remains to show the opposite inclusion. For a graph $G$ in the join, we need to show that $G$ satisfies the properties of Claim \ref{d(g)=m} and that $G$ has Gallai--Edmonds decomposition $(D_1, \dots, D_r ; A; C)$. 
		Since the decomposition is chosen so that $\mathsf{F}_{(D_1, \dots, D_r ; A; C)}$ is non-empty, it follows trivially that 
		$H'$ is contained in $G$. Moreover, by Theorem \ref{ge-decomposition}, we have  $W=D(G) = D_1 \cup \cdots \cup D_r$ and $2\nu(G) = |A|+|V'|-r=2(k-1)$. So it suffices to show  that $G$ has Gallai--Edmonds decomposition $(D_1, \dots, D_r ; A; C)$. 
		
		We first claim there is a matching in $G-v$ of size 
		\[k-1=\textstyle{\frac{1}{2}}(\textstyle{\sum_{j=1}^r}(|D_j|-1) + 2|A| + |C|).\]
		for any vertex $v\in D$. (This equality comes from Theorem \ref{ge-decomposition}.) 
		Let $D_i$ denote the component of $D$ which contains the vertex $v$. Since $G[D,A]\in \mathsf{Proj}_{H'[D,A]}$, we can find a matching $M_{D,A}$ in $G[(D\setminus D_i), A]$ which covers $A$ such that all the edges go to distinct components of $D$. Next, we can extend the matching $M_{D,A}$ further to obtain a matching which covers an additional $\textstyle{\sum_{j=1}^r}(|D_j|-1)+|C|$ vertices in $D\cup C$. This follows from the assumptions that $G[D_j] \in \mathsf{FC}_{H'[D_j]}$ for each $j\in [r]$ and $G[C] \in \mathsf{PM}_{H'[C]}$. 
		
		Now we show that $\nu(G) = k-1$, and that any matching of size $k-1$ covers every vertex in $A\cup C$. This will imply that $G\in \mathsf{F}_{(D_1, \dots, D_r; A; C)}$. Consider an arbitrary matching $M$ in $G$ and let $D'\subseteq D$, $A'\subseteq A$, and $C'\subseteq C$ be the sets of the vertices covered by $M$. Since each $|D_j|$ is odd and $N_{G}(D_j)\subseteq A$ we must have $|D'|\leq |D| - (r-|A|)$. Together with the trivial bounds $|A'|\leq |A|$ and $|C'| \leq |C|$ we get 
		\[2|M| = |D'|+|A'|+|C'| \leq |D|-(r-|A|) + |A| + |C| = 2(k-1). \]
		Therefore, we have $\nu(G)=k-1$. Also when $|M|=k-1$, we have $A'=A$ and $C'=C$. This completes the proof.
	\end{proof}
	
	\subsection*{The endgame}
	With the join structure of Claim \ref{join structure} we are left with finding an acyclic matching for each term of the join (Lemma \ref{product}). The $\mathsf{FC}$ terms and the $\mathsf{PM}$ term can be handled by Propositions \ref{size perfect m} and \ref{size-d0}. It remains to  deal with the term $\mathsf{Proj}_{H'[D,A]}$.
	
	\begin{claim} \label{first-projection}
		There is an acyclic matching on $\mathsf{Proj}_{H'[D,A]}$ such that any critical set $\sigma$ satisfies 
		\[|\sigma| \leq 2|A| + |H'[D,A]|.\]
	\end{claim}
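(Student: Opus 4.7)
The plan is that the family $\mathsf{Proj}_{H'[D,A]}$ has been engineered precisely so that the Projection Lemma (Lemma \ref{iden}) applies directly, reducing the problem to an instance of Proposition \ref{size-yz}. Concretely, I take the ground set $E = K_{D,A}$ with the partition $E = \bigcup_{(t,a)\in K_{[r],A}} E_{(t,a)}$ already fixed above, the prescribed subgraph $\tau = H'[D,A]$, and the target family $\mathsf{Q} = \{\overline{\sigma}\subseteq K_{[r],A} : \overline{\sigma}\text{ is }A\text{-factor critical}\}$. With these choices the family of the Projection Lemma coincides with $\mathsf{Proj}_{H'[D,A]}$.

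First, I would identify the image of $\mathsf{Proj}_{H'[D,A]}$ under $\pi$. By part (\ref{proj lemma easy}) of Lemma \ref{iden},
\[\pi\bigl(\mathsf{Proj}_{H'[D,A]}\bigr) = \{\overline{\sigma}\in\mathsf{Q}: \pi(H'[D,A])\subseteq \overline{\sigma}\},\]
and since the definition of $(Y,Z)$-factor critical reduces to $Y$-factor critical when $Z=\emptyset$, this image is precisely the family $\mathsf{BFC}_{([r],A,\emptyset;\pi(H'[D,A]))}$. (This matches the labeling in Figure \ref{fig_claim4.4}.) Next I would invoke Proposition \ref{size-yz} with $X=[r]$, $Y=A$, $Z=\emptyset$, and $H=\pi(H'[D,A])$ to obtain an acyclic matching on this projected family whose critical sets $\overline{\sigma}$ satisfy
\[|\overline{\sigma}|\leq 2|A|+|\emptyset|+|\pi(H'[D,A])|=2|A|+|\pi(H'[D,A])|.\]

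Finally I would apply part (\ref{proj lemma main}) of Lemma \ref{iden} to lift this to an acyclic matching on $\mathsf{Proj}_{H'[D,A]}$ itself. The restriction of $\pi$ to the lifted critical sets is an injection, and each critical set $\sigma$ satisfies
\[|\sigma|=|\pi(\sigma)|-|\pi(H'[D,A])|+|H'[D,A]|\leq 2|A|+|H'[D,A]|,\]
which is exactly the bound demanded by the claim. There is no real obstacle here: the entire proof is a matter of recognizing the projected family as an instance of $\mathsf{BFC}$ and bookkeeping the size transformation supplied by the Projection Lemma. All of the genuine difficulty has been deferred to Proposition \ref{size-yz}, which is the most delicate of the three structural propositions and is addressed in Section \ref{section_size-yz}.
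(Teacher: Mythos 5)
Your proposal is correct and follows exactly the paper's own argument: apply the Projection Lemma \ref{iden} with $\tau = H'[D,A]$ and $\mathsf{Q} = \mathsf{BFC}_{([r],A,\emptyset;\emptyset)}$, identify the image as $\mathsf{BFC}_{([r],A,\emptyset;\pi(H'[D,A]))}$, invoke Proposition \ref{size-yz} for the bound $2|A|+|\pi(H'[D,A])|$, and lift via part (\ref{proj lemma main}) using the size relation $|\sigma|=|\pi(\sigma)|-|\pi(H'[D,A])|+|H'[D,A]|$. The bookkeeping is accurate, so nothing further is needed.
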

	
	\begin{proof}
		We apply Lemma \ref{iden} with $\tau = H'[D,A]$ and $\mathsf{Q} = \mathsf{BFC}_{([r], A, \emptyset; \emptyset)}$. Note that $\mathsf{Q}$ is the family consisiting of all $A$-factor critical subgraphs of $K_{[r],A}$. By part {\em (\ref{proj lemma easy})} of Lemma \ref{iden} we have
		$\pi(\mathsf{Proj}_{H'[D,A]})
		= \mathsf{BFC}_{([r],A,\emptyset; \pi(H'[D,A]))}$.
		By Proposition \ref{size-yz} there is an acyclic matching on $\pi(\mathsf{Proj}_{H'[D,A]})$ where any critical set $\sigma$ has size at most $2|A| + |\pi(H'[D,A])|$. Applying part {\em (\ref{proj lemma main})} of Lemma \ref{iden} there is an acyclic matching on $\mathsf{Proj}_{H'[D,A]}$ where any critical set $\sigma$ has size at most $|\sigma| \leq 2|A| + |H'[D,A]|$.
	\end{proof}
	
	We are ready to finish the proof of Proposition \ref{special case}. Let $\mathsf{P} = \mathsf{F}_{(D_1, \dots, D_r; A; C)}$ be a non-empty subfamily of $\mathsf{F}$. 
	First we deal with the case when $H'[A] \neq K_A$ or $H'[A,C]\neq K_{A,C}$. In this case fix an edge $e\in K_A\cup K_{A,C}$ which is not an edge of $H$. By Lemma \ref{same-ge}, we have 
	$G-e, G+e \in \mathsf{P}$ for every graph $G\in \mathsf{P}$. This implies that $\mathsf{P}$ has a complete acyclic matching (Lemma \ref{boolean}).
	
	We may therefore assume that $H'[A] = K_A$ and $H'[A,C] = K_{A,C}$, and Proposition \ref{join structure} applies. By the Lemma \ref{product} it suffices to find an acyclic matching for each factor of the join, and sum up the sizes of critical sets in each factor. 
	
	By Propositions \ref{product} and \ref{size-d0}, the join  
	$\mathsf{FC}_{H'[D_1]} * \cdots * \mathsf{FC}_{H'[D_r]}$ has an acyclic  matching $\mathcal{M}_{\mathsf{FC}}$ where any critical set $\sigma$ satisfies
	\begin{align*}
	|\sigma| & \leq \textstyle{\sum_{i=1}^r} \left( \textstyle{\frac{3}{2}} (|D_i| - 1) + |H'[D_i]|\right) \\
	&= \textstyle{\frac{3}{2}} \left( 2(k-1) - 2|A| - |C|) \right) + |H'[D]|\\
	& = 3k-3-3|A|-\textstyle{\frac{3}{2}}|C| + |H'[D]|,
	\end{align*}
	with strict inequality whenever $H'[D]$ contains at least one edge.
	
	By Claim \ref{first-projection},  the  $\mathsf{Proj}_{H'[D,A]}$ term has an acyclic matching $\mathcal{M}_{\mathsf{Proj}}$ where any critical set $\sigma$ satisfies 
	\[|\sigma| \leq 2|A| + |H'[D,A]|.\]
	
	For the term $\mathsf{PM}_{H'[C]}$ we use Proposition \ref{size perfect m} to find an acyclic  matching $\mathcal{M}_{\mathsf{PM}}$ where any critical set $\sigma$ satisfies 
	\[|\sigma| \leq \textstyle{\frac{3}{2}} |C| + |H'[C]|,\]
	with strict inequality whenever $C$ is non-empty. 
	
	Finally, the terms $\{K_A\}$ and $\{K_{A,C}\}$ both have empty acyclic matchings with single critical sets of size $|K_A| = |H'[A]|$ and $|K_{A,C}| = |H'[A,C]|$, respectively.
	
	We now combine all these matchings using the Lemma \ref{product}. Noting that 
	\[|H'| = |H'[D]| + |H'[D,A]| + |H'[A]| + |H'[A,C]| + |H'[C]|,\] 
	we find that $\mathsf{P}$ has an acyclic matching $\mathcal{M} = \mathcal{M}_{\mathsf{FC}} \cup \mathcal{M}_{\mathsf{Proj}} \cup \mathcal{M}_{\mathsf{PM}}$ where any critical set $\sigma$ satisfies
	$|\sigma| \leq 3k - 3 - |A| + |H'|$ with strict inequality whenever $C$ is non-empty. Therefore, when $A\cup C$ is non-empty, we have $|\sigma| \leq 3k-4 +|H'|$.
	
	So, suppose $A\cup C$ is empty. By assumption $H'$ contains at least one edge (this was condition 
	(\ref{non-triv-H}) when we chose the vertex $v$). This implies that $H'[D]$ must contain at least one edge, in which case we must have strict inequality. Consequently, we have $|\sigma| \leq 3k-4 +|H'|$.
	
	The bound on $|\sigma|$ holds for any non-empty family $\mathsf{P} = \mathsf{F}_{(D_1, \dots, D_r; A; C)}$. By Lemma \ref{combine-ge} we have an acyclic matching on $\mathsf{F}$ where the same bound holds. Since $\mathsf{F}_1  = \mathsf{F} * \{K_{N_H(v), \{v\}}\}$ (recall that $N_G(v)=N_H(v)$ for any $G\in \mathsf{F}_1$) we get an acyclic matching on $\mathsf{F}_1$ where any critical set $\sigma$ satisifies $|\sigma|\leq 3k-4 +|H|$. \qed
	
	\subsection{Complete bipartite graphs} \label{complete bipartite}
	Fix a bipartite graph $H\subseteq K_{X,Y}$ with $1\leq \nu(H)<k$ and 
	define the family
	\[\mathsf{B}_H = \{ G\subseteq K_{X,Y} : \nu(G)< k, H\subseteq G\}. \]
	\begin{prop}\label{bipartite case}
		There is an acyclic matching on $\mathsf{B}_H$ such that any critical set $\sigma$ satisfies 
		\[|\sigma| \leq 2k-3+|H|.\]
	\end{prop}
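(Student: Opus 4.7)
The plan is to mimic the strategy used to prove Proposition \ref{special case}, adapted to the bipartite setting. The key simplification is that in a bipartite graph every component of $D$ in the Gallai--Edmonds decomposition is a singleton (the only bipartite factor-critical graph is a single vertex), so $D$ is independent. This eliminates the $\mathsf{FC}$ and $\mathsf{Proj}$ terms from the complete case, leaving only bipartite analogues.

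First I would dispose of the trivial case $\min(|X|,|Y|)<k$ via Lemma \ref{boolean}. Otherwise, after swapping $X$ and $Y$ if necessary, pick $v\in X$ of minimum degree in $H$; since $\nu(H)<k\le |Y|$ we have $H\neq K_{X,Y}$, so $W:=Y\setminus N_H(v)$ is non-empty. The first and second reductions then transfer essentially verbatim from Claims \ref{step0} and \ref{d(g)=m}, reducing the task to finding an acyclic matching on
\[\mathsf{B}' \;=\; \{G\subseteq K_{X',Y} : H'\subseteq G,\ \nu(G)=k-1,\ D(G)\cap Y = W\},\]
where $X'=X\setminus\{v\}$ and $H'=H[V']$. (Only $D(G)\cap Y=W$ appears because $v\in X$ has no edges to $X'$ in $K_{X,Y}$.)

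Decomposing $\mathsf{B}'$ by Gallai--Edmonds via Lemma \ref{combine-ge}, and using Lemma \ref{same-ge} to reduce to the case where $H'[A^X,A^Y]$, $H'[A^X,C^Y]$ and $H'[A^Y,C^X]$ are complete bipartite graphs, each non-empty subfamily $\mathsf{B}'_{(D_1,\ldots,D_r;A;C)}$ factors as the join
\[\mathsf{BFC}_{(D^X, A^Y, \emptyset; H'[D^X, A^Y])} * \mathsf{BFC}_{(W, A^X, \emptyset; H'[W, A^X])} * \mathsf{PM}^{\mathsf{bip}}_{H'[C]} * \{K_{A^X, A^Y}\} * \{K_{A^X, C^Y}\} * \{K_{A^Y, C^X}\},\]
where $D^X=D\cap X'$, $D^Y=W$, and $\mathsf{PM}^{\mathsf{bip}}_{H'[C]}$ denotes the family of subgraphs of $K_{C^X, C^Y}$ having a perfect matching and containing $H'[C]$. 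This reflects the bipartite Gallai--Edmonds matching condition, which splits cleanly into an $A^Y$-factor-critical condition on $G[D^X, A^Y]$ and an $A^X$-factor-critical condition on $G[W, A^X]$. For the $\mathsf{PM}^{\mathsf{bip}}$ factor I would use a virtual-vertex trick: augmenting $C^X$ with a new vertex $x_0$ adjacent to all of $C^Y$ identifies this family bijectively with $\mathsf{BFC}_{(C^X\cup\{x_0\}, C^Y, \emptyset; H'[C]\cup K_{\{x_0\}, C^Y})}$, and Proposition \ref{size-yz} then yields critical sets of size at most $2|C^Y|+|H'[C]|$, strict whenever $C\neq\emptyset$.

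Summing the bounds from Proposition \ref{size-yz} over the join via Lemma \ref{product} and using the bipartite identity $|A^X|+|A^Y|+|C^Y|=k-1$ (from $2|A|+|C|=2(k-1)$ and $|C^X|=|C^Y|$), the critical sets satisfy $|\sigma|\le 2k-2+|H'|$. The main obstacle is extracting the last unit of savings to reach $2k-3+|H|$. If $\deg_H(v)\ge 1$ then $|H|\ge |H'|+1$ and we are done immediately. If $\deg_H(v)=0$ then $N_H(v)=\emptyset$, which forces $A^Y\subseteq Y\setminus W=N_H(v)=\emptyset$, so the first $\mathsf{BFC}$ factor is trivial; the only way to fail to save a unit by strict inequality would require $H'[W,A^X]=\emptyset$ and $C=\emptyset$, but then $X'=D^X\cup A^X$ and $Y=W$ would place every edge of $H'$ into $H'[D^X,W]$ (which is empty since $D$ is independent) or $H'[A^X,W]=\emptyset$, forcing $H=\emptyset$ and contradicting $\nu(H)\ge 1$. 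Hence strict inequality is always available, completing the proof.
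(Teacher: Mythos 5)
Your overall architecture is sound and close to the paper's: delete a minimum-degree vertex $v$, characterize the reduced family by $\nu(G)=k-1$ and $D(G)\cap Y=W$, split by Gallai--Edmonds decompositions, use Lemma \ref{same-ge} to make the $A$--$A$ and $A$--$C$ parts of $H'$ complete, and bound each factor of the resulting join by Proposition \ref{size-yz}. Your join is a genuine variation: the paper uses the minimality of $\deg_H(v_0)$ to force $K_{(A_X\cup C_X),(A_Y\cup C_Y)}\subseteq H'$, so the whole $C$-part is swallowed by a single fixed complete factor and no perfect-matching family ever appears, whereas you keep a bipartite $\mathsf{PM}$ factor on $C$ and reduce it to Proposition \ref{size-yz} by the virtual-vertex $x_0$; that reduction, including the arithmetic $3|C^Y|+|H'[C]|$ minus the $|C^Y|$ edges at $x_0$, is correct and is a nice trick.

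The genuine gap is in the endgame accounting for the case $\deg_H(v)\geq 1$. The acyclic matching you build lives on $\mathsf{B}'$ (graphs on $X'\cup Y$), and the matching on $\mathsf{B}_H$ is obtained via $\mathsf{B}_1=\mathsf{B}'*\{K_{N_H(v),\{v\}}\}$ and Lemma \ref{product}: every critical set of $\mathsf{B}_1$ (hence of $\mathsf{B}_H$) is a critical set of $\mathsf{B}'$ together with the star at $v$, so its size is the $\mathsf{B}'$-bound \emph{plus} $\deg_H(v)=|H|-|H'|$. Thus a bound of $2k-2+|H'|$ on $\mathsf{B}'$ only gives $2k-2+|H|$ on $\mathsf{B}_H$, one more than allowed; the inequality $|H|\geq|H'|+1$ buys you nothing, because those $\deg_H(v)$ edges are added back to every critical set. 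What you actually need is $2k-3+|H'|$ on $\mathsf{B}'$, i.e.\ one unit of strictness inside the join in \emph{every} case, exactly as in your $\deg_H(v)=0$ analysis. This is recoverable but must be argued: when $C\neq\emptyset$ your $\mathsf{PM}$ factor already gives the strict inequality; when $C=\emptyset$ and $N_H(v)\neq\emptyset$, note $N_H(v)=A^Y\cup C^Y=A^Y\neq\emptyset$, so $D^X\neq\emptyset$, and minimality of $\deg_H(v)$ over $X$ forces every $u\in D^X$ to have an $H$-edge, necessarily into $A^Y$ and not at $v$, so $H'[D^X,A^Y]\neq\emptyset$ and Proposition \ref{size-yz} is strict for that $\mathsf{BFC}$ factor (the paper's version of this step instead uses global minimality to force $H'[W,A^X]\neq\emptyset$, which your ``minimum degree in $X$'' choice does not directly give). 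Without some such argument, your proof as written only establishes $|\sigma|\leq 2k-2+|H|$ when $\deg_H(v)\geq 1$.
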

	This result implies Theorem \ref{link-result} for the case when $G$ is a complete bipartite graph (by the same argument using Theorem \ref{morse_fundamental}, as we did for complete graphs in Section \ref{complete case}). 
	
	\begin{proof}[Proof of Proposition \ref{bipartite case}] We follow the same strategy as in the proof of Proposition 
		\ref{complete case}. It may  be assumed that $X$ and $Y$ are both non-empty and that $H\neq K_{X,Y}$.
		
		This first part is identical to the previous proof.  
		Start by choosing a vertex $v_0 \in X\cup Y$ of minimal degree in $H$. Note that $\deg_H(v_0)$ could equal zero and that $H$ contains at least one edge not incident to $v_0$. 
		
		Without loss of generality we assume that $v_0\in Y$, and we set $W = X\setminus N_H(v_0)$ and $S = K_{W, \{v_0\}}$. Note that our assumption $H\neq K_{X,Y}$ implies $W\neq \emptyset$. For a graph $G\in \mathsf{B}_H$ define the subset $S_G\subseteq S$ as 
		\[S_G = \{e\in S : G+e \in \mathsf{B}_H\}.\]
		Now define the subfamilies 
		\begin{align*}
		\mathsf{B}_0 & = \{G\in \mathsf{B}_H : S_G \neq \emptyset \}, \\
		\mathsf{B}_1 & = \{G\in \mathsf{B}_H : S_G = \emptyset \} .
		\end{align*}
		Thus we get a partition
		\[\mathsf{B}_H = \mathsf{B}_0 \cup \mathsf{B}_1.\]
		It turns out that the conclusion of Claim \ref{step0} holds in this situation as well. That is, $\mathsf{B}_0$ has a complete acyclic matching and for any acyclic matching on $\mathsf{B}_1$, their union is an acyclic matching on $\mathsf{B}_H$. The proof we gave earlier also works here, and is therefore omitted. (The key property needed is that the edges in $S$ are all incident to a common vertex.)
		
		Note that $N_G(v_0) = X\setminus W = N_H(v_0)$ for every graph $G\in \mathsf{B}_1$. Therefore, if we
		define the family \[\mathsf{B} = \{G - \{v_0\} : G \in \mathsf{B}_1\},\]
		then $\mathsf{B}_1 = \mathsf{B} * \{K_{(X\setminus W), \{v_0\}}\}$ and by Proposition \ref{product} our problem is reduced to finding an acyclic matching on $\mathsf{B}$. Set $Y' = Y \setminus \{v_0\}$ and $H' = H[X,Y']$, and  observe that $\mathsf{B}$ is the family of all bipartite graphs $G\subseteq G_{X, Y'}$ which satisfy:
		\begin{itemize}
			\item $H' \subseteq G$,
			\item $\nu(G) = \nu(G\cup K_{(X\setminus W), \{v_0\}}) =  k - 1$, and
			\item $\nu(G+e) = k$ for every $e\in S = K_{W, \{v_0\}}$.
		\end{itemize}
		By Lemma \ref{combine-ge} our problem is reduced to finding an acyclic matching on each non-empty subfamily $\mathsf{B}_{(D; A; C)} \subseteq \mathsf{B}$. Note that we use a simpler notation $(D;A;C)$, because the components of $D$ in the Gallai--Edmonds decomposition of a bipartite graph consists of singletons and are therefore uniquely determined by the set $D$. This follows from that a bipartite graph with at least two vertices cannot be factor critical.
		
		Let us fix a non-empty subfamily $\mathsf{B}_{(D; A; C)}$ and introduce the notation
		\[\def\arraystretch{1.35}
		\begin{array}{rclcrcl}
		D_X & = & D\cap X   &, &    D_Y & = & D\cap Y' \\
		A_X & = & A\cap X   &, &    A_Y & = & A\cap Y' \\
		C_X & = & C\cap X   &, &    C_Y & = & C\cap Y' .\\
		\end{array}
		\]
		It follows from the defining properties of $\mathsf{B}$ that $D_X = W$. Note also that $|C_X| = |C_Y|$,  $N_H(v_0) = A_X\cup C_X$, and $\frac{1}{2}|C|+|A| = k-1$. Moreover, for any $G\in \mathsf{B}_{(D; A; C)}$ we have $N_G(D_X) = A_Y$ and $N_G(D_Y) = A_X$.
		
		\medskip
		
		First consider the case $N_H(v_0) = \emptyset$. In this case $D_X=X$, $A = A_Y$, $C = \emptyset$,  and $D_Y$ is just a set of isolated vertices. It follows that  
		\[\mathsf{B}_{(D;A;C)} = \mathsf{BFC}_{(X, A, \emptyset; H')}.\]
		Since $H'$ is non-empty and $|A| = k-1$, it follows from Proposition \ref{size-yz} that $\mathsf{B}_{(D; A; C)}$ has an acyclic matching where any critical set $\sigma$ satisifies $|\sigma| \leq 2(k-1) - 1 + |H'|$. This gives us the desired bound. 
		
		\medskip
		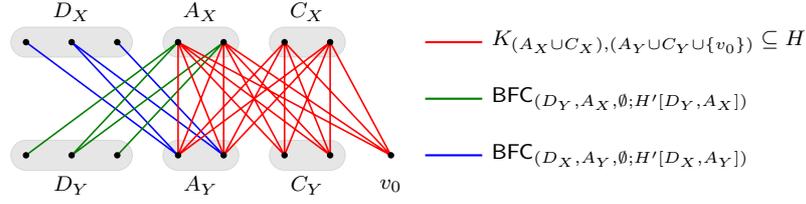
\begin{figure} 
			\centering
			\begin{tikzpicture}
			\coordinate (dx1) at (0.0,1.5);
			\coordinate (dx2) at (0.6,1.5);
			\coordinate (dx3) at (1.2,1.5);
			\coordinate (ax1) at (2.0,1.5);
			\coordinate (ax2) at (2.6,1.5);
			\coordinate (cx1) at (3.4,1.5);
			\coordinate (cx2) at (4.0,1.5);
			
			\coordinate (dy1) at (0.0,0);
			\coordinate (dy2) at (0.6,0);
			\coordinate (dy3) at (1.2,0);
			\coordinate (ay1) at (2.0,0);
			\coordinate (ay2) at (2.6,0);
			\coordinate (cy1) at (3.4,0);
			\coordinate (cy2) at (4.0,0);
			
			\coordinate (v0)  at (4.8,0);
			
			\coordinate (R) at (6,1.5);
			\coordinate (B) at (6,0.0);
			\coordinate (G) at (6,0.75);

			\filldraw [black, thin, opacity=0.1] ($(dx1)+(0,0.2)$) -- ($(dx3)+(0,0.2)$) arc[start angle = 90, end angle = -90, radius = 0.2] -- ($(dx1)+(0,-0.2)$) arc[start angle = 270, end angle = 90, radius = 0.2]
			
			($(ax1)+(0,0.2)$) -- ($(ax2)+(0,0.2)$) arc[start angle = 90, end angle = -90, radius = 0.2] -- ($(ax1)+(0,-0.2)$) arc[start angle = 270, end angle = 90, radius = 0.2]
			
			($(cx1)+(0,0.2)$) -- ($(cx2)+(0,0.2)$) arc[start angle = 90, end angle = -90, radius = 0.2] -- ($(cx1)+(0,-0.2)$) arc[start angle = 270, end angle = 90, radius = 0.2]
			
			($(dy1)+(0,0.2)$) -- ($(dy3)+(0,0.2)$) arc[start angle = 90, end angle = -90, radius = 0.2] -- ($(dy1)+(0,-0.2)$) arc[start angle = 270, end angle = 90, radius = 0.2]
			
			($(ay1)+(0,0.2)$) -- ($(ay2)+(0,0.2)$) arc[start angle = 90, end angle = -90, radius = 0.2] -- ($(ay1)+(0,-0.2)$) arc[start angle = 270, end angle = 90, radius = 0.2]
			
			($(cy1)+(0,0.2)$) -- ($(cy2)+(0,0.2)$) arc[start angle = 90, end angle = -90, radius = 0.2] -- ($(cy1)+(0,-0.2)$) arc[start angle = 270, end angle = 90, radius = 0.2];
			
			\draw[red, thick] (R) -- ($(R)+(-.75,0)$);
			\draw[blue, thick] (B) -- ($(B)+(-.75,0)$);
			\draw[green!50!black, thick] (G) -- ($(G)+(-.75,0)$);
			
			\node[right] at (R) {\footnotesize $K_{(A_X\cup C_X), (A_Y\cup C_Y\cup \{v_0\})} \subseteq H$};
			
			\node[right] at (G) {\footnotesize $\BFC_{(D_Y, A_X, \emptyset; H'[D_Y,A_X])}$};
			
			\node[right] at (B) {\footnotesize $\BFC_{(D_X, A_Y, \emptyset; H'[D_X,A_Y])}$};
			
			\draw [blue, semithick] 
			(dx1) -- (ay1)
			(dx2) -- (ay1)
			(dx2) -- (ay2)
			(dx3) -- (ay2); 
			
			\draw [green!50!black, semithick] 
			(dy1) -- (ax1)
			(dy2) -- (ax1)
			(dy2) -- (ax2)
			(dy3) -- (ax2); 
			
			\draw [red, semithick]
			(ax1) -- (ay1)
			(ax1) -- (ay2)
			(ax1) -- (cy1)
			(ax1) -- (cy2)
			(ax1) -- (v0)
			(ax2) -- (ay1)
			(ax2) -- (ay2)
			(ax2) -- (cy1)
			(ax2) -- (cy2)
			(ax2) -- (v0)
			(cx1) -- (ay1)
			(cx1) -- (ay2)
			(cx1) -- (cy1)
			(cx1) -- (cy2)
			(cx1) -- (v0)
			(cx2) -- (ay1)
			(cx2) -- (ay2)
			(cx2) -- (cy1)
			(cx2) -- (cy2)
			(cx2) -- (v0);
			
			\filldraw[black]
			(dx1) circle (1pt)
			(dx2) circle (1pt)
			(dx3) circle (1pt)
			(ax1) circle (1pt)
			(ax2) circle (1pt)
			(cx1) circle (1pt)
			(cx2) circle (1pt)
			
			(dy1) circle (1pt)
			(dy2) circle (1pt)
			(dy3) circle (1pt)
			(ay1) circle (1pt)
			(ay2) circle (1pt)
			(cy1) circle (1pt)
			(cy2) circle (1pt)
			(v0) circle (1pt);
			
			\node at ($(dx2)+(0,0.4)$) {\footnotesize $D_X$};
			\node at ($(ax1)+(0.3,0.4)$) {\footnotesize $A_X$};
			\node at ($(cx1)+(0.3,0.4)$) {\footnotesize $C_X$};
			\node at ($(dy2)+(0,-0.4)$) {\footnotesize $D_Y$};
			\node at ($(ay1)+(0.3,-0.4)$) {\footnotesize $A_Y$};
			\node at ($(cy1)+(0.3,-0.4)$) {\footnotesize $C_Y$};
			\node at ($(v0)+(0,-0.4)$) {\footnotesize $v_0$};
			\end{tikzpicture}
			\caption{The join structure in the proof of Proposition \ref{bipartite case} when $N_H(v_0)\ne \emptyset$ and $K_{(A_X \cup C_X), (A_Y\cup C_Y)}\subseteq H'$.}
			\label{fig_section4.2}
		\end{figure}

		Now suppose $N_H(v_0) \neq \emptyset$. Since $N_H(v_0) = A_X\cup C_X$, the minimality assumption on $\deg_H(v_0)$ therefore implies that $A_X\cup C_X = N_H(u)$ for every vertex $u\in C_Y$. In particular, $K_{(A_X\cup C_X), C_Y}\subseteq H'$. Next, if there exists an edge $e\in K_{(A_X\cup C_X), A_Y}$ which is not an edge in $H'$, then $G-e, G+e \in \mathsf{B}_{(D;A;C)}$ for every graph $G\in \mathsf{B}_{(D;A;C)}$ by Lemma \ref{same-ge}. In this case, $\mathsf{B}_{(D;A;C)}$ has a complete acyclic matching by Lemma \ref{boolean}.
		
		We may therefore assume that $K_{(A_X\cup C_X), (A_Y\cup C_Y)} \subseteq H'$. This gives us the join structure 
		\[\mathsf{B}_{(D;A;C)} = \mathsf{BFC}_{(D_X, A_Y, \emptyset; H'[D_X, A_Y])} * \mathsf{BFC}_{(D_Y, A_X, \emptyset; H'[D_Y, A_X])} * \{ K_{(A_X\cup C_X), (A_Y\cup C_Y)}\}. \]
		(See Figure \ref{fig_section4.2}.) Applying Proposition \ref{size-yz} to each of the $\mathsf{BFC}$ terms, we find an acyclic matching on $\mathsf{B}_{(D;A:C)}$ where any critical set $\sigma$ satisifies 
		\[|\sigma| \leq 2|A_X| + 2|A_Y| + |H'|= 2|A| + |H'|.\]
		Moreover this inequlity is strict whenever $H'[D_X,A_Y]$ or $H'[D_Y, A_Y]$ contains at least one edge. By assumption, $D_X\neq \emptyset$, and since $N_H(v_0)\neq \emptyset$ and $D_X\cap N_H(v_0)=\emptyset$
		we must have $\deg_{H'}(u)>0$ for every $u\in D_X$. Consequently we have strict inequality above, and since $|A|\leq k-1$ we have an acyclic matching on $\mathsf{B}_{(D;A;C)}$ where any critical set $\sigma$ satisfies $|\sigma|\leq 2k-3 + |H'|$. 
	\end{proof}

	\subsection{General case} \label{homology}
	Here we deduce the general case of Theorem \ref{link-result} from the special cases shown in the previous subsections. The arguments here deal with general simplicial complexes (so in particular they hold for graph complexes). Theorem \ref{leray-result} will also be proved here.
	
	\smallskip
	
	Recall from subsection \ref{subs:mains} that a simplicial complex $\mathsf{K}$ is $d$-Leray if $\tilde{H}_i(\mathsf{L}) = 0$ for all $i\geq d$ and for every induced subcomplex $\mathsf{L}\subseteq \mathsf{K}$. An immediate consequence of this definition is that the $d$-Leray property is {\em hereditary}, meaning that any induced subcomplex of a $d$-Leray complex is also $d$-Leray. This fact is less obvious if we consdider the equivalent definition of the $d$-Leray property which states that $\mathsf{K}$ is $d$-Leray if $\tilde{H}_i(\mathsf{lk_K}(\sigma))=0$ for every $i\geq d$ and every $\sigma\in \mathsf{K}$. 
	
	Here we are concerned with a property that is slightly weaker than the $d$-Leray property. 
	Let $\mathsf{K}$ be a simplicial complex with the property:
	
	\medskip
	
	\begin{centerline}{
			($\ast$) {\em For every  non-empty face $\sigma\in K$, the link $\lk_\mathsf{K}(\sigma)$ 
				has vanishing homology in all dimensions $d\geq d_0$.}}
	\end{centerline}
	
	\medskip

	\noindent The following proposition tells us that property ($\ast$) is hereditary.
	
	\begin{prop} \label{hereditary}
		Let $\mathsf{K}$ be a simplicial complex on the vertex set $V$ which satisfies property {\em ($\ast$)}.
		Then, for any non-empty subset $S\subseteq V$, the induced subcomplex $\mathsf{K}[S]$ also satisfies property {\em ($\ast$)}.
	\end{prop}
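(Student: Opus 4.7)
The plan is to upgrade property ($\ast$) at a non-empty face $\sigma$ to the full $d_0$-Leray property on the link $\lk_\mathsf{K}(\sigma)$, and then exploit heredity of the Leray property.

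First I would make the elementary identification of links inside an induced subcomplex. Let $S \subseteq V$ be non-empty and fix any non-empty face $\sigma \in \mathsf{K}[S]$. For $\tau \subseteq S \setminus \sigma$, the condition $\sigma \cup \tau \in \mathsf{K}[S]$ is equivalent to $\sigma \cup \tau \in \mathsf{K}$ since $\sigma \cup \tau \subseteq S$ automatically. Hence
\[
\lk_{\mathsf{K}[S]}(\sigma) \;=\; \lk_\mathsf{K}(\sigma) \cap 2^{S \setminus \sigma} \;=\; \bigl(\lk_\mathsf{K}(\sigma)\bigr)[S \setminus \sigma],
\]
so $\lk_{\mathsf{K}[S]}(\sigma)$ is an induced subcomplex of $\lk_\mathsf{K}(\sigma)$.

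Next I would show that $\lk_\mathsf{K}(\sigma)$ is $d_0$-Leray. Using the equivalent definition of $d$-Leray recalled in the excerpt (Proposition~3.1 of \cite{kalai-meshulam2}), it suffices to verify that $\tilde{H}_i\bigl(\lk_{\lk_\mathsf{K}(\sigma)}(\tau)\bigr) = 0$ for every face $\tau \in \lk_\mathsf{K}(\sigma)$ (including $\tau = \emptyset$) and every $i \geq d_0$. But the link-of-a-link identity gives $\lk_{\lk_\mathsf{K}(\sigma)}(\tau) = \lk_\mathsf{K}(\sigma \cup \tau)$. Because $\sigma$ is non-empty, the face $\sigma \cup \tau$ is non-empty as well, so property ($\ast$) applied to $\mathsf{K}$ at the face $\sigma \cup \tau$ yields the required vanishing. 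This shows that $\lk_\mathsf{K}(\sigma)$ is $d_0$-Leray.

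Finally, because $d_0$-Leray is hereditary with respect to induced subcomplexes (directly from the original definition in terms of all induced subcomplexes having vanishing homology in dimensions $\geq d_0$), the induced subcomplex $\lk_{\mathsf{K}[S]}(\sigma) = \bigl(\lk_\mathsf{K}(\sigma)\bigr)[S \setminus \sigma]$ inherits vanishing reduced homology in all dimensions $\geq d_0$. Since this holds for every non-empty $\sigma \in \mathsf{K}[S]$, the induced subcomplex $\mathsf{K}[S]$ satisfies property ($\ast$), completing the proof. The argument is essentially a one-line trick once one notices that non-emptiness of $\sigma$ is what lets property ($\ast$) reach every face of $\lk_\mathsf{K}(\sigma)$, including the empty one; I do not anticipate a serious obstacle.
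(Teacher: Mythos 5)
Your proof is correct, but it takes a different route from the paper. You upgrade property ($\ast$) to the statement that $\lk_\mathsf{K}(\sigma)$ is $d_0$-Leray for every non-empty $\sigma$, by feeding the link-of-a-link identity $\lk_{\lk_\mathsf{K}(\sigma)}(\tau)=\lk_\mathsf{K}(\sigma\cup\tau)$ into the equivalence of \cite[Proposition 3.1]{kalai-meshulam2} between the induced-subcomplex definition of Leray and the link characterization; you then quote heredity of the Leray property (immediate from its original definition) together with the identity $\lk_{\mathsf{K}[S]}(\sigma)=\bigl(\lk_\mathsf{K}(\sigma)\bigr)[S\setminus\sigma]$, which you verify correctly. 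The paper instead gives a self-contained argument: it reduces to deleting a single vertex $v$, notes $\lk_{\mathsf{K}-v}(\sigma)=\lk_\mathsf{K}(\sigma)-v$, and applies the Mayer--Vietoris sequence to the pair $\mathsf{X}-v$, $\st_\mathsf{X}(v)$ with $\mathsf{X}=\lk_\mathsf{K}(\sigma)$, using contractibility of the star and the identity $\lk_\mathsf{X}(v)=\lk_\mathsf{K}(\sigma\cup\{v\})$ so that ($\ast$) kills the two outer terms. What your approach buys is brevity: once the Kalai--Meshulam equivalence is taken as a black box, the proposition is essentially formal. What the paper's approach buys is self-containment at the same technical cost one pays inside that black box --- the cited equivalence is itself proved by essentially the same vertex-deletion Mayer--Vietoris induction --- so the paper's version makes the homological mechanism explicit rather than outsourcing it. Both are valid; just be aware that you are invoking a nontrivial cited result where the paper deliberately avoids doing so.
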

	
	\begin{proof}
		It is enough to show that $\mathsf{K}-v$ also satisfies ($\ast$) for an arbitrary vertex $v\in V$. That is, we show that for every non-empty face $\sigma \in \mathsf{K}-v$, we have $\tilde{H}_d(\lk_{\mathsf{K}-v}(\sigma))=0$ for all $d\geq d_0$. Since $\lk_{\mathsf{K}-v}(\sigma)=\lk_\mathsf{K}(\sigma)-v$, we consider reduced homology groups of $\lk_\mathsf{K}(\sigma)-v$. Note that we only need to consider the case when $v \in \lk_\mathsf{K}(\sigma)$, since $\lk_\mathsf{K}(\sigma)-v=\lk_\mathsf{K}(\sigma)$ otherwise.
		
		Let $\mathsf{X}=\lk_\mathsf{K}(\sigma)$, and define the {\em star} of $v$ in $\mathsf{X}$ as
		\[\st_\mathsf{X}(v)=\{\tau\in \mathsf{X}: \tau\cup \{v\}\in \mathsf{X} \}.\]
		Applying the Mayer--Vietoris sequence to the pair $\mathsf{X}-v$, $\st_\mathsf{X}(v)$ and using the fact that $\st_\mathsf{X}(v)$ is contractible, implies exactness of the sequence
		\begin{align*}\label{mayer-vietoris}
		\cdots \rightarrow \tilde{H}_d(\lk_\mathsf{X}(v)) \rightarrow  \tilde{H}_d(\mathsf{X} - v) \rightarrow \tilde{H}_d(\mathsf{X})\rightarrow \cdots.
		\end{align*}
		Since $\mathsf{K}$ satisfies ($\ast$), the last term $\tilde{H}_d(\mathsf{X})=\tilde{H}_d(\lk_\mathsf{K}(\sigma))$ vanishes for all $d\geq d_0$. Using ($\ast$) again, the identity 
		\[ \lk_\mathsf{X}(v)=\lk_{\lk_\mathsf{K}(\sigma)}(v)=\lk_\mathsf{K}(\sigma \cup \{v\})\]
		implies that the first term also vanishes for all $d\geq d_0$. Therefore $\tilde{H}_d(\mathsf{X}-v)$ also vanishes.
	\end{proof}

	\begin{cor} \label{near d ler}
		Let $\mathsf{K}$ be a simplicial complex on the vertex set $V$ which satisfies property {\em ($\ast$)}. Then $\mathsf{K}$ has vanishing homology in all dimension $d\geq d_0+1$.
	\end{cor}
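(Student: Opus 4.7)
The plan is to induct on $|V|$, reusing the Mayer--Vietoris setup that already appeared in the proof of Proposition \ref{hereditary}. The key observation is that exactly the same exact sequence works when applied to $\mathsf{K}$ itself (i.e.\ to the link of the empty face) and not only to the link of a non-empty face; the only difference is that we can no longer quote ($\ast$) on $\lk_\mathsf{K}(\emptyset)=\mathsf{K}$, which is precisely why we lose one dimension in the conclusion.

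For the base case, when $|V|\leq 1$ the complex $\mathsf{K}$ is at most a single vertex, so $\tilde{H}_d(\mathsf{K})=0$ for every $d\geq 1$. Since $d_0\geq 1$ in all applications (and more generally $d_0+1\geq 1$), the conclusion holds trivially.

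For the inductive step, pick any vertex $v\in V$ with $|V|\geq 2$. Applying Mayer--Vietoris to $\mathsf{K}=(\mathsf{K}-v)\cup \st_\mathsf{K}(v)$ with intersection $\lk_\mathsf{K}(v)$, and using the contractibility of the star, yields the exact sequence
\[
\cdots \to \tilde{H}_d(\lk_\mathsf{K}(v)) \to \tilde{H}_d(\mathsf{K}-v) \to \tilde{H}_d(\mathsf{K}) \to \tilde{H}_{d-1}(\lk_\mathsf{K}(v)) \to \cdots
\]
Fix $d\geq d_0+1$. The rightmost term vanishes by property ($\ast$) applied to the non-empty face $\{v\}$, since $d-1\geq d_0$. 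For the leftmost term, Proposition \ref{hereditary} ensures that $\mathsf{K}-v$ still satisfies ($\ast$), and since its vertex set $V\setminus\{v\}$ is strictly smaller, the inductive hypothesis gives $\tilde{H}_d(\mathsf{K}-v)=0$. Exactness then forces $\tilde{H}_d(\mathsf{K})=0$, completing the induction.

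There is no genuine obstacle in this argument: the real work was already carried out in Proposition \ref{hereditary}, whose role here is precisely to make the induction legal by guaranteeing that the class of complexes satisfying ($\ast$) is closed under deletion of vertices. The corollary then reduces to a single round of Mayer--Vietoris. The only subtlety worth flagging is the dimension shift: the $d_0$ from ($\ast$) controls $\tilde{H}_{d-1}(\lk_\mathsf{K}(v))$, so the conclusion for $\mathsf{K}$ itself is only guaranteed in dimensions $d\geq d_0+1$, which is exactly the bound claimed.
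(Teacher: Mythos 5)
Your proof is correct and is essentially the paper's argument: both rest on Proposition \ref{hereditary} together with the Mayer--Vietoris sequence for the pair $\mathsf{K}-v$, $\st_\mathsf{K}(v)$, using ($\ast$) on the link terms; your induction on $|V|$ is just the direct-induction packaging of the paper's minimal-counterexample formulation. (Only a cosmetic slip: the term you call ``leftmost'' is $\tilde{H}_d(\mathsf{K}-v)$, the second term in the displayed sequence, but the exactness argument you run is the intended and correct one.)
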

	\begin{proof}
		For contradiction, suppose that $\mathsf{K}$ does not satisfy the conclusion. Let $W\subseteq V$ be an inclusion minimal subset such that the induced subcomplex $\mathsf{L} = \mathsf{K}[W]$ satisfies:
		\begin{itemize}
			\item There is a $d \geq d_0 + 1$ such that $\tilde{H}_d (\mathsf{L}) \ne 0$.
			\item We have $\tilde{H}_d(\mathsf{M}) = 0$ for every proper induced subcomplex $\mathsf{M} \subseteq \mathsf{L}$ and $d \geq d_0 + 1$.
		\end{itemize}
		By Proposition \ref{hereditary}, $\mathsf{L}$ satisfies property ($\ast$). Note that $|W|\geq 2$, otherwise $\tilde{H}_k(\mathsf{L})=0$ for every integer $k$.
		
		Fix a vertex $v$ of $\mathsf{L}$ and apply the Mayer--Vietoris sequence to the pair $\mathsf{L} - v$,  $\st_\mathsf{L}(v)$. This implies exactness of the sequence
		\[\cdots \to \tilde{H}_d(\lk_\mathsf{L}(v)) \to
		\tilde{H}_d(\mathsf{L}-v) \to \tilde{H}_d(\mathsf{L}) \to
		\tilde{H}_{d-1}(\lk_\mathsf{L}(v)) \to \cdots \]
		For all $d\geq d_0+1$, property ($\ast$) implies that the first and last terms are zero which implies that the two middle terms are isomorphic. The second term is zero by the minimality assumption, and so  $\tilde{H}_d(\mathsf{L}) =0$.
	\end{proof}

	Now we can prove Theorem \ref{link-result} in full generality. Consider an arbitrary graph $G\subseteq K_V$. The non-matching complex $\mathsf{NM}_k(G)$ is an induced subcomplex of $\mathsf{NM}_k(K_V)$, and in Section \ref{complete case} we showed that $\mathsf{NM}_k(K_V)$ satisfies property ($\ast$) with $d_0 = 3k-4$. By Proposition \ref{hereditary} it follows that  $\mathsf{NM}_k(G)$ also satisfies ($\ast$). If $G$ is bipartite, then $\mathsf{NM}_k(G)$ is an induced subcomplex of $\mathsf{NM}_k(K_{X,Y})$, and therefore satisfies ($\ast$) with $d_0= 2k-3$.
	This proves Theorem \ref{link-result}, and Theorem \ref{leray-result} now follows from Corollary \ref{near d ler}. \qed

	\section{Proof of Proposition \ref{size perfect m}}  \label{section_size perfect m}
	Fix a graph $H$ on the vertex set $V$.
	Our goal is to find an acyclic matching $\mathcal{M}$ on $\mathsf{PM}_H$ such that any critical set $\sigma$ satisfies 
	\[|\sigma|\leq \textstyle{\frac{3}{2}}|V|-2+\max\{|H|-1,0\},\]
	whenever $|V|$ is an even positive integer. With the obvious inequality for the case when $|V|=0$, we have the desired inequality in Proposition \ref{size perfect m}. We assume that $H\neq K_V$, otherwise it is obvious.
	
	\subsection*{First reduction}
	Fix an edge $e_0 = vw \in K_V\setminus H$ with the additional condition that $\deg_H(w)>0$ if $|H|>0$. This is possible since $H\neq K_V$.
	Define the subfamily $\mathsf{F}_0\subseteq \mathsf{PM}_H$ as
	\[\mathsf{F}_0 = \{G\subseteq K_V : G-e_0, G+e_0 \in \mathsf{PM}_H\},\]
	and set $\mathsf{F}_1 = \mathsf{PM}_H \setminus \mathsf{F}_0$. Note that for any graph $G\in \mathsf{PM}_H$, $G+e_0$ has a perfect matching, that is, $G+e_0 \in \mathsf{PM}_H$.	Note also that  $\mathsf{F}_1$ consists of those graphs in $\mathsf{PM}_H$ for which {\em every} perfect matching contains the edge $e_0$. 
	By Lemma \ref{boolean}
	our problem is reduced to finding a suitable acyclic matching on $\mathsf{F}_1$. Define the family \[\mathsf{F} = \{G-e_0 : G\in \mathsf{F}_1\},\] and note that $\mathsf{F}_1 = \mathsf{F} * \{e_0\}$. This reduces our problem to finding an acyclic matching on $\mathsf{F}$ (by Lemma \ref{product}). Since $\nu(G) = \frac{|V|}{2}-1$ for every $G\in \mathsf{F}$, the problem is further reduced to finding an acyclic matching for each non-empty subfamily $\mathsf{F}_{(D_1, \dots, D_r; A; C)}\subseteq \mathsf{F}$ (by Lemma \ref{combine-ge}). Note also that by Theorem \ref{ge-decomposition} we have $|A| = r-2$.

	\subsection*{Join structure}
	Our next step is to give a join structure on the family $\mathsf{F}_{(D_1, \dots, D_r; A; C)}$. 
	We first observe that the vertices $v$ and $w$  belong to distinct components of $D$ (recall $e_0 = vw$). To see this, consider any graph $G\in \mathsf{F}_{(D_1, \dots, D_r; A; C)}$. A perfect matching $M$ in $G+e_0$ must contain the edge $e_0$. Therefore $M\setminus \{e_0\}$ is a maximum matching in $G$ that avoids vertices $v$ and $w$, which must lie in distinct components of $D$ (by Theorem \ref{ge-decomposition}). 
	
	Relabel the components of $D$ (if necessary) such that $v\in D_{r-1}$ and $w\in D_r$. Note also that if $|H|>0$, then the assumption $\deg_H(w)>0$ implies that $H[D_r]$ or $H[D,A]$ is non-empty. 
	
	Consider the complete bipartite graph $K_{D,A}$. Define the index set 
	\[I = \{(t, a) : 1\leq t\leq r-1, a\in A\}\]
	and partition the edges of $K_{D,A}$ as 
	\[K_{D,A} = \textstyle{\bigcup}_{(t,a)\in I} E_{(t,a)} ,\]
	where 
	$E_{(t,a)} = 
	\begin{cases}
	K_{D_t, \{a\}}& \text{ when } t<r-1,\\
	K_{(D_{r-1}\cup D_r), \{a\}} & \text{ when } t=r-1.
	\end{cases}$
	
	\vspace{.7ex}
	
	Let $\pi : 2^{K_{D,A}} \to 2^{K_{[r-1], A}}$ be the corresponding projection map, and 
	define 
	the family 
	\[\mathsf{Proj}_{H[D,A]} = \{G\subseteq K_{D,A} : \pi(G) 
	\text{ is $A$-factor critical}, H[D,A]\subseteq G 
	\}.\]
	
	
	\begin{figure}
		\centering
		
		\begin{tikzpicture}
		
		\begin{scope}
		
		\coordinate (d1) at (-2.8,2);
		\coordinate (d2) at (-1.4,2);
		\coordinate (d3) at (0,2);
		\coordinate (d4) at (1.8,2);
		\coordinate (d5) at (3.2,2);
		
		\coordinate (d11) at ($(d1)+(000+00:.3cm)$);
		\coordinate (d12) at ($(d1)+(120+00:.3cm)$);
		\coordinate (d13) at ($(d1)+(240+00:.3cm)$);
		
		\coordinate (d21) at ($(d2)+(000+40:.3cm)$);
		\coordinate (d22) at ($(d2)+(120+40:.3cm)$);
		\coordinate (d23) at ($(d2)+(240+40:.3cm)$);
		
		\coordinate (d31) at ($(d3)+(000+72:.3cm)$);
		\coordinate (d32) at ($(d3)+(120+72:.3cm)$);
		\coordinate (d33) at ($(d3)+(240+72:.3cm)$);
		
		\coordinate (r41) at ($(d4)+(000+31:.3cm)$);
		\coordinate (r42) at ($(d4)+(120+31:.3cm)$);
		\coordinate (r43) at ($(d4)+(240+31:.3cm)$);
		
		\coordinate (d41) at ($(d5)+(000+41:.3cm)$);
		\coordinate (d42) at ($(d5)+(120+41:.3cm)$);
		\coordinate (d43) at ($(d5)+(240+41:.3cm)$);
		
		\coordinate (a1) at (-1.5,0.5);
		\coordinate (a2) at (-0.5,0.5);
		\coordinate (a3) at (0.5,0.5);
		\coordinate (a4) at (1.5,0.5);
		
		\coordinate (c1) at (-1.5,-.75);
		\coordinate (c2) at (-0.5,-.75);
		\coordinate (c3) at (0.5,-.75);
		\coordinate (c4) at (1.5,-.75);
		
		\coordinate (v1) at (5.6,1.8);
		\coordinate (v2) at (6.2,1.8);
		\coordinate (v3) at (6.8,1.8);
		\coordinate (v4) at (7.4,1.8);
		\coordinate (v5) at (8.0,1.8);
		
		\coordinate (w1) at (5.9,0.8);
		\coordinate (w2) at (6.5,0.8);
		\coordinate (w3) at (7.1,0.8);
		\coordinate (w4) at (7.7,0.8);
		
		\fill [violet, opacity = .2]
		($(d4)+(-.5,.1)$) arc [start angle = 180, end angle = 90, radius = .26] --++ (1.8,0) arc [start angle = 90, end angle = 0, radius = .26] --++ (0,-.3) arc [start angle = 0, end angle = -90, radius = .26] --++ (-1.8,0) arc [start angle = -90, end angle = -180, radius = .26]
		;
		
		\fill [violet, opacity = .3]
		(v5) circle (3pt);
		
		\draw [orange, semithick] 
		(d11) -- (d12) -- (d13) --cycle
		(d21) -- (d22) -- (d23) --cycle
		(d31) -- (d32) -- (d33) --cycle
		(r41) -- (r42) -- (r43) --cycle
		(d41) -- (d42) -- (d43) --cycle;
		
		\draw [blue, semithick] 
		(a1) -- (d13)
		(a1) -- (d32)
		(a1) -- (d33)
		(a2) -- (d33)
		(a2) -- (r43)
		(a4) -- (d23)
		(a4) -- (d42)
		
		(w1) -- (v1)
		(w1) -- (v3)
		(w2) -- (v3)
		(w2) -- (v5)
		(w4) -- (v2)
		(w4) -- (v5);
		
		\draw [red, semithick]
		(a1) -- (a2) .. controls ($(a3) + (0,.2)$) .. (a4)
		(a1) .. controls ($(a2) + (0,-.3)$) and ($(a3) + (0,-.3)$) .. (a4)
		
		(a1) -- (c1) (a1) -- (c2) (a1) -- (c3) (a1) -- (c4)
		(a2) -- (c1) (a2) -- (c2) (a2) -- (c3) (a2) -- (c4)
		(a4) -- (c1) (a4) -- (c2) (a4) -- (c3) (a4) -- (c4);
		
		\draw[green!50!black, semithick]
		(c1) -- (c2) -- (c3) -- (c4) .. controls ($(c3)+(0,-.4)$) and ($(c2)+(0,-.4)$) .. (c1);
		
		\filldraw[black] 
		(d11) circle (1pt)
		(d12) circle (1pt)
		(d13) circle (1pt)
		
		(d21) circle (1pt)
		(d22) circle (1pt)
		(d23) circle (1pt)
		
		(d31) circle (1pt)
		(d32) circle (1pt)
		(d33) circle (1pt)
		
		(r41) circle (1pt)
		(r42) circle (1pt)
		(r43) circle (1pt)
		
		(d41) circle (1pt)
		(d42) circle (1pt)
		(d43) circle (1pt)
		
		(a1) circle (1pt)
		(a2) circle (1pt)
		(a4) circle (1pt)
		
		(c1) circle (1pt)
		(c2) circle (1pt)
		(c3) circle (1pt)
		(c4) circle (1pt)
		
		(v1) circle (1pt)
		(v2) circle (1pt)
		(v3) circle (1pt)
		(v5) circle (1pt)
		
		(w1) circle (1pt)
		(w2) circle (1pt)
		(w4) circle (1pt);
		
		\node[above] at ($(d1)+(0,0.3)$) {\footnotesize $\FC_{H[D_1]}$};
		\node[above] at ($(d2)+(0,0.3)$) {\footnotesize $\FC_{H[D_2]}$};
		\node[above] at ($(d3)+(0,0.3)$) {\footnotesize $\FC_{H[D_3]}$};
		\node[above] at ($(d4)+(0,0.3)$) {\footnotesize $\FC_{H[D_{r-1}]}$}; 
		\node[above] at ($(d5)+(0,0.3)$) {\footnotesize $\FC_{H[D_r]}$}; 
		\node at ($(d4) + (-.9,0)$) {$\dots$};
		\node at (a3) {$\dots$};
		\node at (-3.6,2) {$D:$};
		\node at (-3.6,0.5) {$A:$};
		\node at (-3.6,-0.75) {$C:$};
		\fill [white, opacity =.75] (0,.9) --++ (-.75,0) --++ (0,.4) --++ (1.5,0) --++ (0,-.4) -- cycle;
		\node at (0.0,1.1) {\footnotesize $\mathsf{Proj}_{H[D,A]}$};
		\node at (2.7,-0.1) {\footnotesize $\subseteq H$};
		\node at (2.7,-0.75) {\footnotesize $\PM_{H[C]}$};
		\node at (4.46,1.5) {\footnotesize $\pi$};
		\draw[-Latex, semithick] (4,1.3) -- (5,1.3);
		\node at (v4) {$\dots$};
		\node at (w3) {$\dots$};
		\node at ($(w2)+(.35,-.4)$) {\footnotesize $\BFC_{([r-1], A, \emptyset; \pi([D,A]))}$};
		\end{scope}
		\end{tikzpicture}    
		\caption{The join structure in Claim \ref{perfect matching join}. Here, all vertices in $D_{r-1}\cup D_r$ are identified as a single vertex at the projection.}
		\label{fig:claim5.1}
	\end{figure}
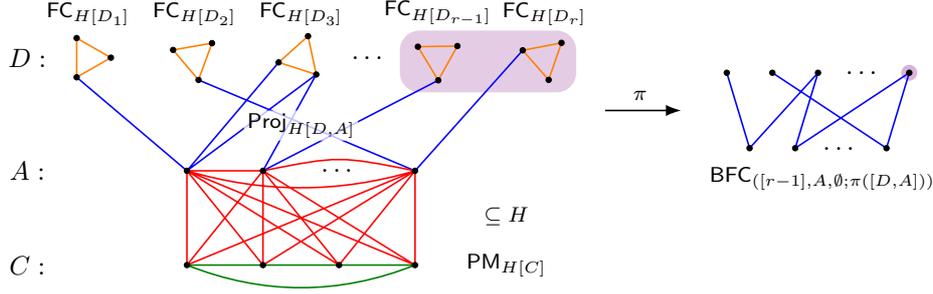
	
	\begin{claim}\label{perfect matching join}
		If $H[A] = K_A$ and $H[A,C] = K_{A,C}$, then 
		\[\mathsf{F}_{(D_1, \dots, D_r; A; C)} = \mathsf{FC}_{H[D_1]} * \cdots * \mathsf{FC}_{H[D_r]} * \mathsf{Proj}_{H[D,A]} * \mathsf{PM}_{H[C]} * \{K_A\} * \{K_{A,C}\}\]
	\end{claim}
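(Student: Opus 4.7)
The plan is to follow the template of Claim~\ref{join structure}, adapted to account for the fact that the super-component $D_{r-1}\cup D_r$ is collapsed into a single node under the projection~$\pi$.

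For the forward inclusion, let $G \in \mathsf{F}_{(D_1,\dots,D_r;A;C)}$. The containments $G[D_i]\in \FC_{H[D_i]}$, $G[C]\in \PM_{H[C]}$, $G[A]=K_A$, and $G[A,C]=K_{A,C}$ all follow directly from Theorem~\ref{ge-decomposition} and the hypothesis on $H$. The only non-trivial point is showing that $\pi(G[D,A])$ is $A$-factor critical, which by Remark~\ref{Halls observation} amounts to $|N_{\pi(G[D,A])}(A')|>|A'|$ for every non-empty $A'\subseteq A$. Let $G^*$ denote the auxiliary bipartite graph between $[r]$ and $A$ in which $(j,a)$ is an edge iff $a\in N_G(D_j)$. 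Part~(\ref{ge bipartite condition}) of Theorem~\ref{ge-decomposition} makes $G^*$ itself $A$-factor critical, giving $|N_{G^*}(A')|\geq |A'|+1$. Since identifying $r-1$ and $r$ in the projection loses at most one neighbor, the bound transfers immediately unless both $r-1$ and $r$ lie in $N_{G^*}(A')$. In that borderline case we use the extra input that $G$ has a maximum matching missing exactly $v$ and $w$ (because $G+e_0$ has a perfect matching through $e_0=vw$), which provides a matching in $G^*$ from $A$ into $[r-2]$ and hence $|N_{G^*}(A')\cap[r-2]|\geq |A'|$; together with the super-component neighbor this yields $|N_{\pi(G[D,A])}(A')|\geq |A'|+1$.

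For the reverse inclusion, let $G$ belong to the join. We must verify $\nu(G)=|V|/2-1$, that the Gallai--Edmonds decomposition of $G$ is $(D_1,\dots,D_r;A;C)$, and the defining property of $\mathsf{F}$. A matching of size $|V|/2-1$ missing exactly $v$ and $w$ is assembled from three pieces: (i) a matching in $G[D\setminus(D_{r-1}\cup D_r),A]$ covering~$A$, obtained by lifting an $A$-covering matching in $\pi(G[D,A])-(r-1)$, (ii) factor critical matchings inside each $G[D_i]$, where the uncovered vertices in $D_{r-1}$ and $D_r$ are chosen to be $v$ and $w$ respectively, and (iii) the perfect matching on $G[C]$. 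Conversely, any perfect matching of $G$ would have to match at least one vertex of every odd-size component of $D$ out to $A$ (since in the join one has $N_G(D)\subseteq A$), forcing $r\leq |A|=r-2$. For the Gallai--Edmonds part, $D_1,\dots,D_r$ are already the connected components of $G[D]$; for any vertex $u\in D_i$ the same construction, with factor criticality on $D_i$ chosen to miss $u$ and $A$-factor criticality of $\pi(G[D,A])$ with index $\pi_{\mathrm{comp}}(i)$ removed, yields a maximum matching missing~$u$. Thus $D\subseteq D(G)$, and since the maximum matchings cover $A\cup C$ the full decomposition follows. Finally, augmenting the maximum matching missing $v,w$ by $e_0$ gives a perfect matching of $G+e_0$ using $e_0$; any perfect matching of $G+e_0$ must use $e_0$ lest $G$ itself have a perfect matching, so $G+e_0\in \mathsf{F}_1$, i.e., $G\in \mathsf{F}$.

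The main obstacle is the Hall-type verification for the projected family: the potential loss of one neighbor due to identifying $r-1$ and $r$ forces us to invoke the defining property of $\mathsf{F}_1$ (that $G+e_0$ has a perfect matching containing $e_0$), which is the only new ingredient beyond the Gallai--Edmonds-based argument already used in Claim~\ref{join structure}.
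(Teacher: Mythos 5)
Your proposal is correct and follows essentially the same route as the paper: the forward inclusion combines the same two ingredients (the unmerged auxiliary projection being $A$-factor critical via part~(3) of Theorem~\ref{ge-decomposition}, and the maximum matching of $G$ avoiding $v,w$ giving a perfect matching between $A$ and $[r-2]$), merely with the Hall-condition case analysis taken in the opposite order, and the reverse inclusion is the same construction plus the same parity/counting argument (the paper spells out slightly more explicitly that this counting forces every maximum matching to cover $A\cup C$, which you assert but which follows from the count you already made). No gaps of substance.
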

	\begin{proof}
		(See Figure \ref{fig:claim5.1} for an illustration.) Consider a graph $G\in \mathsf{F}_{(D_1, \dots, D_r; A; C)}$. 
		To show that $G$ belongs to the join it suffices to show that $G' = G[D,A]\in \mathsf{Proj}_{H[D,A]}$ (the other terms are obvious). 
		Clearly $H[D,A]\subseteq G'$, 
		so we need to show that $\pi(G')$ is $A$-factor critical. We prove this by showing that $|N_{\pi(G')}(A')|>|A'|$ for each non-empty subset $A'\subseteq A$. Set $Z = [r-2]$, and
		note that our previous discussion which showed that there is a maximum matching in $G$ which avoids the vertices $v$ and $w$ implies that $\pi(G')[Z,A]$ has a perfect matching. In particular $|N_{\pi(G')}(A')\cap Z| \geq |A'|$ for every subset $A'\subseteq A$. When the inequality is strict, we are done. So suppose that $|N_{\pi(G')}(A')\cap Z| = |A'|$.
		
		Define an auxiliary projection map $\pi' : 2^{K_{D,A}} \to 2^{K_{[r], A}}$ corresponding to the partition
		\[K_{D,A} = \textstyle{\bigcup} K_{D_s, \{a\}}, \text{ where } s\in [r] \text{ and }a\in A.\]
		Note that $(N_{\pi(G')}(A') \cap Z) = (N_{\pi'(G')}(A') \cap Z)$. 
		It follows from Theorem \ref{ge-decomposition} that $\pi'(G')$ is $A$-factor critical, which by Hall's marriage theorem implies that $|N_{\pi'(G')}(A')|>|A'|$. With $|N_{\pi(G')}(A')\cap Z| = |A'|$, this implies that $(N_{\pi'(G')}(A')\cap \{r, r-1\})$ is non-empty. This again implies that $r-1 \in N_{\pi(G')}(A')$. Therefore $G'$ is in $\mathsf{Proj}_{H[D,A]}$.

		Now we show the opposite inclusion.
		Let $G$ be a graph in the join. It is obvious that $H\subseteq G$, and so our goal is to show that $G$ has Gallai--Edmonds decomposition $(D_1, \dots, D_r; A; C)$ and that $G+e_0$ has a perfect matching. 
		
		Consider a matching $M$ in $G$ and let $D'\subseteq D$, $A'\subseteq A$, and $C'\subseteq C$ denote the vertices covered by $M$. Since $|D_i|$ is odd and $N_G(D_i) \subseteq A$ for all $i$, we have $|D'|\leq |D| - (r - |A|) = |D|-2$. Therefore 
		\[|M| = \textstyle{\frac{1}{2}}(|D'|+|A'|+|C'|) \leq \textstyle{\frac{1}{2}}(|D|-2+|A|+|C|) = \textstyle{\frac{1}{2}}(|V|-2),\]
		and it is easily seen that $\nu(G)= \frac{1}{2}(|V|-2)$. Moreover, since any maximum matching in $G$ covers all but two vertices, these uncovered vertices must belong to distinct $D_i$. The defining conditions of $\mathsf{Proj}_{H[D,A]}$ imply that for any vertex $v\in D_1\cup \cdots \cup D_r$ there is a maximum matching which does not cover $v$. Therefore $G$ has the desired Gallai--Edmonds decomposition. 
		Finally, the fact that $G+e_0$ has a perfect matching follows from the condition that $\pi(G[D,A])[Z,A]$ has a perfect matching. 
	\end{proof}

	\subsection*{The endgame}
	In order to apply Lemma \ref{product} we need to find suitable acyclic matchings for each of the term of the join in Claim \ref{perfect matching join}. The acyclic matchings on the factors $\mathsf{FC}_{H[D_i]}$ are given by Proposition \ref{size-d0}, and for the factor $\mathsf{PM}_{H[C]}$ we can apply induction on $|C|$. It remains to deal with the term $\mathsf{Proj}_{H[D,A]}$.
	
	\begin{claim}\label{second project}
		There is an acyclic matching on $\mathsf{Proj}_{H[D,A]}$ such that any critical set $\sigma$ satisfies
		\[|\sigma| \leq 2|A| + \max\{|H[D,A]|-1,0\}.\]
	\end{claim}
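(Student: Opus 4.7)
The plan is to apply the Projection Lemma (Lemma \ref{iden}) in exactly the same way as in Claim \ref{first-projection}, but this time I would carefully track the strict inequality afforded by Proposition \ref{size-yz} in order to save the extra unit in the bound. Specifically, I would set $\tau = H[D,A]$ and $\mathsf{Q} = \mathsf{BFC}_{([r-1], A, \emptyset; \emptyset)}$, so that $\mathsf{Q}$ is the family of all $A$-factor critical subgraphs of $K_{[r-1], A}$. By part {\em (\ref{proj lemma easy})} of Lemma \ref{iden},
\[\pi\bigl(\mathsf{Proj}_{H[D,A]}\bigr) = \mathsf{BFC}_{([r-1], A, \emptyset;\, \pi(H[D,A]))}.\]

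Next I would feed this family into Proposition \ref{size-yz} (with $X=[r-1]$, $Y=A$, $Z=\emptyset$, and $H$-parameter $\pi(H[D,A])$) to obtain an acyclic matching whose critical sets $\gamma$ satisfy $|\gamma|\leq 2|A| + |\pi(H[D,A])|$, with the inequality being \emph{strict} whenever $\pi(H[D,A])$ has at least one edge. Here one should note that $\pi(H[D,A])$ is non-empty precisely when $H[D,A]$ is non-empty, since each edge of $H[D,A]$ projects to an edge in $K_{[r-1],A}$ (vertices in $D$ and $A$ lie in distinct parts of the projection).

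I would then lift this matching via part {\em (\ref{proj lemma main})} of Lemma \ref{iden}, which produces an acyclic matching on $\mathsf{Proj}_{H[D,A]}$ whose critical sets $\sigma$ are in injection with those above and satisfy the size identity $|\sigma| = |\pi(\sigma)| - |\pi(\tau)| + |\tau|$. A short case analysis then finishes the claim: if $H[D,A] = \emptyset$, then $|\pi(\tau)|=|\tau|=0$ and so $|\sigma| = |\pi(\sigma)| \leq 2|A|$; if $H[D,A]$ has at least one edge, then by the strict inequality above $|\pi(\sigma)| \leq 2|A| + |\pi(H[D,A])| - 1$, hence
\[|\sigma| \leq \bigl(2|A| + |\pi(H[D,A])| - 1\bigr) - |\pi(H[D,A])| + |H[D,A]| = 2|A| + |H[D,A]| - 1.\]
In both cases $|\sigma| \leq 2|A| + \max\{|H[D,A]|-1, 0\}$, as required.

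There is no real obstacle in this argument — the entire proof is a mechanical application of the Projection Lemma together with Proposition \ref{size-yz}, essentially identical to Claim \ref{first-projection}. The only point requiring attention is to verify that the ``strict inequality'' clause of Proposition \ref{size-yz} transfers correctly through the projection, which follows from the elementary observation that $\pi$ sends edges of $H[D,A]$ to edges of $K_{[r-1],A}$ and hence $\pi(H[D,A])$ is non-empty whenever $H[D,A]$ is.
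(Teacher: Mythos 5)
Your proposal is correct and is essentially the paper's own proof: the paper likewise applies Lemma \ref{iden} with $\tau = H[D,A]$ and $\mathsf{Q} = \mathsf{BFC}_{([r-1],A,\emptyset;\emptyset)}$ (written there with an apparent typo as $[c-1]$), invokes Proposition \ref{size-yz} together with its strict-inequality clause, and lifts the bound back through the size identity of the Projection Lemma. Your explicit case analysis of how $|\sigma| = |\pi(\sigma)| - |\pi(\tau)| + |\tau|$ interacts with the strict inequality, and the observation that $\pi(H[D,A])$ is non-empty whenever $H[D,A]$ is, merely spell out details the paper leaves implicit.
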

	
	\begin{proof}
		We apply Lemma \ref{iden} with $\tau = H[D,A]$ and $\mathsf{Q} = \mathsf{BFC}_{([c-1], A, \emptyset; \emptyset)}$. Note that $\mathsf{Q}$ consists of all $A$-factor critical subgraphs of $K_{[c-1], A}$. By part {\em (\ref{proj lemma easy})} of Lemma \ref{iden} it follows that $\pi(\mathsf{Proj}_{H[D,A]}) = \mathsf{BFC}_{([c-1], A, \emptyset; \pi(H[D,A]))}$. Therefore $\pi(\mathsf{Proj}_{H[D,A]})$ has an acyclic matching where any critical set has size at most $2|A| + \max\{|\pi(H[D,A])|-1,0\}$ (by Proposition \ref{size-yz}). Applying part {\em (\ref{proj lemma main})} of Lemma \ref{iden} there is an acyclic matching on $\mathsf{Proj}_{H[D,A]}$ where any critical set $\sigma$ has size at most $ 2|A| + \max\{|H[D,A]|-1, 0\}$.
	\end{proof}
	
	We are ready to finish the proof of Proposition \ref{size perfect m}. We will apply induction on $|V|$. It is easy to check that the bound holds when $|V|=2$, so we assume $|V|\geq 4$ is even and that the bound holds for vertex sets of even size strictly less than $|V|$.
	
	Let $\mathsf{P} = \mathsf{F}_{(D_1, \dots, D_r; A; C)}$ be a non-empty subfamily of $\mathsf{F}$. First consider the case when there exists an edge $e\in K_A \cup K_{A,C}$ which is not an edge in $H$. For any graph $G\in \mathsf{P}$ it follows from Lemma \ref{same-ge} that $G-e$ and $G+e$ both contain $H$ and have the same Gallai--Edmonds decompostion $(D_1, \dots, D_r; A; C)$. Moreover, any perfect matching in $G+e_0$ does not contain the edge $e$, and therefore $G-e, G+e\in \mathsf{P}$. It follows that $\mathsf{P}$ has a complete acyclic matching (Lemma \ref{boolean}). 
	
	We may therefore assume that $H[A] =K_A$ and $H[A,C] = K_{A,C}$, and so Claim \ref{perfect matching join} can be applied. By Lemma \ref{product} it suffices to find an acyclic matching for each term of the join and sum up the sizes of the critical sets of each term. 
	
	By Lemma \ref{product} and Proposition \ref{size-d0}, the join $\mathsf{FC_{H[D_1]}} * \cdots * \mathsf{FC_{H[D_r]}}$ has an acyclic matching where any critical set $\sigma$ satisfies
	\[\def\arraystretch{1.65}
	\begin{array}{rcl}
	|\sigma| & \leq & \textstyle{\sum}_{i=1}^r\left( \textstyle{\frac{3}{2}}(|D_i|-1) + |H[D_i]| \right) \\
	& = &   \textstyle{\frac{3}{2}}(|D|-|A|-2) + |H[D]|.
	\end{array}
	\]
	%
	Here we used that $r = |A|+2$ in the last equality.
	
	By Claim \ref{second project} the term $\mathsf{Proj}_{H[D,A]}$ has an acyclic matching where any critical set $\sigma$ satisfies 
	\[\def\arraystretch{1.65}
	\begin{array}{rcl}
	|\sigma| & \leq & 2|A| + |H[D,A]|.
	\end{array}
	\]
	
	For the term $\mathsf{PM}_{H[C]}$ we can apply induction since $|C|<|V|$. Therefore this term has an acyclic matching where any critical set $\sigma$ satisfies 
	\[\def\arraystretch{1.65}
	\begin{array}{rcl}
	|\sigma| & \leq & \textstyle{\frac{3}{2}}|C| + |H[C]|.
	\end{array}
	\]
	
	Finally, the terms $\{K_A\}$ and $\{K_{A,C}\}$ both have empty acyclic matchings with single critical sets of size $|K_A| = |H[A]|$ and $|K_{A,C}| = |H[A,C]|$, respectively.

	We now sum up these bounds and apply Lemma \ref{product}.  Thus we find an acyclic matching on $\mathsf{P}$ where any critical set $\sigma$ satisfies
	$|\sigma|\leq \textstyle{\frac{3}{2}}|V|-3 +|H|$. 	Moreover, if $H$ is non-empty, then our choice of $e_0=vw$ implies that $H[D]$ or $H[D,A]$ is also non-empty. Therefore the above inequality is strict whenever $H$ is non-empty.  This bound holds for any non-empty family $\mathsf{P} = \mathsf{F}_{(D_1, \dots, D_r; A; C)} \subseteq \mathsf{F}$, so by Lemma \ref{combine-ge} there is an acyclic matching on $\mathsf{F}$ where every critical set satisfy the same bound. Finally, since $\mathsf{F}_1 = \mathsf{F} * \{e_0\}$ we get an acyclic matching in $\mathsf{F}_1$ where any critical set $\sigma$ satisfies $|\sigma|\leq \frac{3}{2}|V|-2+|H|$ with strict inequality whenever $H$ is non-empty.
	\qed

	\section{Proof of Proposition \ref{size-d0}} \label{section_size-d0}
	
	Fix a graph $H$ on the vertex set $V$.
	Our goal is to find an acyclic matching $\mathcal{M}$ on $\mathsf{FC}_H$ such that any critical set $\sigma$ satisfies 
	\[|\sigma|\leq \textstyle{\frac{3}{2}}(|V|-1)+\max\{ |H|-1 , 0\}.\]
	We may assume that  $|V|$ is odd and $H\neq K_V$.

	\subsection*{Reduction step}
	The first part of the proof is similar to the proof of Proposition \ref{size perfect m}. Fix an edge $e_0 = vw \in K_V\setminus H$ with the additional condition that $\deg_H(w)>0$ if $|H|>0$. 
	
	Define the subfamily $\mathsf{F}_0\subseteq \mathsf{FC}_H$ as 
	\[\mathsf{F}_0 = \{ G\subseteq K_V : G-e_0, G+e_0\in \mathsf{FC}_H\},\]
	and set $\mathsf{F}_1 = \mathsf{FC} \setminus \mathsf{F}_0$. 
	Note that $\mathsf{F}_1$ consists of the graphs $G\in \mathsf{FC}_H$ such that $e_0\in G$ and  $G-e_0$ is not factor critical.
	Just as in the proof of Proposition \ref{size perfect m}, by applying Lemma \ref{boolean}, our problem is reduced to finding a suitable acyclic matching on $\mathsf{F}_1$. Define the family \[\mathsf{F} = \{G-e_0 : G\in \mathsf{F}_1\},\] and note that $\mathsf{F}_1 = \mathsf{F} * \{e_0\}$. 
	
	Since $G+e_0$ is factor critical for any  $G\in \mathsf{F}$ it follows that $\nu(G) = \frac{|V|-1}{2}$ (both graphs $(G+e_0)-\{v\}$ and $(G+e_0)-\{w\}$ have perfect matchings).
	Therefore Lemma \ref{same-ge} implies that we can further reduce the problem to finding an acyclic matching on each non-empty subfamily $\mathsf{F}_{(D_1, \dots, D_r; A; C)} \subseteq \mathsf{F}$. Note that by Theorem \ref{ge-decomposition} we have $|A| = r-1$.

	
	
	\subsection*{Join structure} Our next goal is to give a join structure on the family $\mathsf{F}_{(D_1, \dots, D_r; A; C)}$. We first show that $v$ and $w$ belong to distinct components of $D$. To see why, consider any  $G\in \mathsf{F}_{(D_1, \dots, D_r; A; C)}$. The assumption that $G+e_0$ is factor critical implies that $v$ and $w$ both belong to $D$. Since $G$ is not factor critical, we have $A\cup C$ is non-empty. For any $a\in A\cup C$, $(G+e_0)-\{a\}$ has a perfect matching $M$ on $V\setminus \{a\}$. This is impossible when $v$ and $w$ are in the same components of $D$, since each of $r=|A|+1$ components of $D$ should have at least one vertex to be matched via $M$ with a vertex in $A\setminus \{a\}$. Thus $r>1$ and $A$ is non-empty.

	We set $A = \{a_1, \dots, a_{r-1}\}$ and after relabeling the components of $D$ (if necessary) we may assume that $v\in D_{r-1}$ and $w\in D_r$. Note that if $|H|>0$, then the assumption $\deg_H(w)>0$ implies that $H[D_r]$ or $H[D,A]$ is non-empty. 
	
	\medskip
	
	Now we define a projection map from the complete bipartite graph $K_{D,A}$. Define the index set 
	$I = \{(j,a) : j\in [r], a\in A\}$ and set $E_{(j,a)} = K_{D_j, \{a\}}$.
	This gives us a partition,
	\[K_{D,A} = \textstyle{\bigcup}_{e \in K_{[r],A}} \: E_e,\]
	with a corresponding projection map $\pi : 2^{K_{D,A}} \to 2^{K_{[r],A}}$. Set $Z = [r-2]$ and define the family
	\[\mathsf{Proj}_{H[D,A]} = \{G\subseteq K_{D,A} : \pi(G) \text{ is $(A,Z)$-factor critical, } H[D,A]\subseteq G\}.\]
	\medskip
	
	Next we define a projection map from $K_{A,C} \cup K_C$.
	For an edge $e \in K_{(\{\alpha\} \cup C)}$ define
	\[E_e = 
	\begin{cases}
	K_{A, \{v\}} & \text{ for } e  = \alpha v\in K_{\{ \alpha \},C} \\
	\{e\} & \text{ for } e \in K_{C}. 
	\end{cases}\]
	This gives us a partition, 
	$K_{A,C} \cup K_C = \textstyle{\bigcup}_{e\in K_{(\{ \alpha \}\cup C)}} \: E_e$,
	with a corresponding projection map $\pi' : 2^{K_{A,C} \cup K_C} \to 2^{K_{(\{ \alpha \} \cup C)}}$.
	%
	%
	%
	Define the family
	\[\mathsf{Proj}_{H[A,C]\cup H[C]} = 
	\{
	G\subseteq K_{A,C}\cup K_C : \pi'(G) \text{ is factor critical, } H[A,C]\cup H[C]\subseteq G\}.
	\]
	
	\begin{figure}
		\centering
		
		\begin{tikzpicture}
		
		\begin{scope}
		
		\coordinate (d1) at (-2.8,2);
		\coordinate (d2) at (-1.4,2);
		\coordinate (d3) at (0,2);
		\coordinate (d4) at (1.4,2);
		\coordinate (d5) at (2.8,2);
		
		\coordinate (d11) at ($(d1)+(000+00:.3cm)$);
		\coordinate (d12) at ($(d1)+(120+00:.3cm)$);
		\coordinate (d13) at ($(d1)+(240+00:.3cm)$);
		
		\coordinate (d21) at ($(d2)+(000+40:.3cm)$);
		\coordinate (d22) at ($(d2)+(120+40:.3cm)$);
		\coordinate (d23) at ($(d2)+(240+40:.3cm)$);
		
		\coordinate (d31) at ($(d3)+(000+72:.3cm)$);
		\coordinate (d32) at ($(d3)+(120+72:.3cm)$);
		\coordinate (d33) at ($(d3)+(240+72:.3cm)$);
		
		\coordinate (d41) at ($(d5)+(000+41:.3cm)$);
		\coordinate (d42) at ($(d5)+(120+41:.3cm)$);
		\coordinate (d43) at ($(d5)+(240+41:.3cm)$);
		
		\coordinate (a1) at (-1.5,0.5);
		\coordinate (a2) at (-0.5,0.5);
		\coordinate (a3) at (0.5,0.5);
		\coordinate (a4) at (1.5,0.5);
		
		\coordinate (c1) at (-1.5,-.75);
		\coordinate (c2) at (-0.5,-.75);
		\coordinate (c3) at (0.5,-.75);
		\coordinate (c4) at (1.5,-.75);
		
		\coordinate (v1) at (5.6,2.2);
		\coordinate (v2) at (6.2,2.2);
		\coordinate (v3) at (6.8,2.2);
		\coordinate (v4) at (7.4,2.2);
		\coordinate (v5) at (8.0,2.2);
		
		\coordinate (w1) at (5.9,1.2);
		\coordinate (w2) at (6.5,1.2);
		\coordinate (w3) at (7.1,1.2);
		\coordinate (w4) at (7.7,1.2);
		
		\coordinate (y) at (6.8,0);
		
		\coordinate (x1) at (5.9,-1);
		\coordinate (x2) at (6.5,-1);
		\coordinate (x3) at (7.1,-1);
		\coordinate (x4) at (7.7,-1);
		
		\draw [orange, semithick] 
		(d11) -- (d12) -- (d13) --cycle
		(d21) -- (d22) -- (d23) --cycle
		(d31) -- (d32) -- (d33) --cycle
		(d41) -- (d42) -- (d43) --cycle;
		
		\draw [blue, semithick] 
		(a1) -- (d13)
		(a1) -- (d32)
		(a1) -- (d33)
		(a2) -- (d33)
		(a2) -- (d43)
		(a4) -- (d23)
		(a4) -- (d42)
		
		(w1) -- (v1)
		(w1) -- (v3)
		(w2) -- (v3)
		(w2) -- (v5)
		(w4) -- (v2)
		(w4) -- (v5);
		
		\draw [red, semithick]
		(a1) -- (a2) .. controls ($(a3) + (0,.2)$) .. (a4)
		(a1) .. controls ($(a2) + (0,-.3)$) and ($(a3) + (0,-.3)$) .. (a4);
		
		\draw[green!50!black, semithick]
		(c1) -- (c2) -- (c3) -- (c4) .. controls ($(c3)+(0,-.4)$) and ($(c2)+(0,-.4)$) .. (c1)
		(a1) -- (c1)
		(a1) -- (c4)
		(a2) -- (c2)
		(a4) -- (c3)
		(a4) -- (c4)
		(y) -- (x1)
		(y) -- (x2)
		(y) -- (x3)
		(y) -- (x4)
		(x1) -- (x2) -- (x3) -- (x4) .. controls ($(x3)+(0,-.4)$) and ($(x2)+(0,-.4)$) .. (x1);
		
		\filldraw [red, opacity=.2] (y) circle (3pt);
		
		\filldraw[black] 
		(d11) circle (1pt)
		(d12) circle (1pt)
		(d13) circle (1pt)
		
		(d21) circle (1pt)
		(d22) circle (1pt)
		(d23) circle (1pt)
		
		(d31) circle (1pt)
		(d32) circle (1pt)
		(d33) circle (1pt)
		
		(d41) circle (1pt)
		(d42) circle (1pt)
		(d43) circle (1pt)
		
		(a1) circle (1pt)
		(a2) circle (1pt)
		(a4) circle (1pt)
		
		(c1) circle (1pt)
		(c2) circle (1pt)
		(c3) circle (1pt)
		(c4) circle (1pt)
		
		(v1) circle (1pt)
		(v2) circle (1pt)
		(v3) circle (1pt)
		(v5) circle (1pt)
		
		(w1) circle (1pt)
		(w2) circle (1pt)
		(w4) circle (1pt)
		
		(y) circle (1pt)
		(x1) circle (1pt)
		(x2) circle (1pt)
		(x3) circle (1pt)
		(x4) circle (1pt);
		
		\node[above] at ($(d1)+(0,0.3)$) {\footnotesize $\FC_{H[D_1]}$};
		\node[above] at ($(d2)+(0,0.3)$) {\footnotesize $\FC_{H[D_2]}$};
		\node[above] at ($(d3)+(0,0.3)$) {\footnotesize $\FC_{H[D_3]}$};
		\node[above] at ($(d5)+(0,0.3)$) {\footnotesize $\FC_{H[D_r]}$}; 
		\node at (d4) {$\dots$};
		\node at (a3) {$\dots$};
		\node at (-3.6,2) {$D:$};
		\node at (-3.6,0.5) {$A:$};
		\node at (-3.6,-0.75) {$C:$};
		\fill [white, opacity =.75] (0,.9) --++ (-.75,0) --++ (0,.4) --++ (1.5,0) --++ (0,-.4) -- cycle;
		\node at (0.0,1.1) {\footnotesize $\mathsf{Proj}_{H[D,A]}$};
		\node at (2.7,.5) {\footnotesize $K_A \subseteq H$};
		\node at (2.8,-0.45) {\footnotesize $\mathsf{Proj}_{H[A,C]\cup H[C]}$};
		\node at (4.4,1.6) {\footnotesize $\pi$};
		\draw[-Latex, semithick] (4,1.3) -- (5,1.5);
		\node at (v4) {$\dots$};
		\node at (w3) {$\dots$};
		\node at ($(w2)+(.35,-.4)$) {\footnotesize $\BFC_{([r], A, \emptyset; \pi(H[D,A]))}$};
		\node at (4.46,0) {\footnotesize $\pi'$};
		\draw[-Latex, semithick] (4,-.2) -- (5,-.4);
		\node at ($(x2) + (.3,-.6)$) {\footnotesize $\mathsf{FC}_{\pi'(H[A,C]\cup H[C])}$};
		\node [right] at ($(y)+(.1,0)$) {\footnotesize $\alpha$};
		\end{scope}
		\end{tikzpicture}    
		\caption{The join structure in Claim \ref{fc join}. Here, all vertices in $A$ are identified as the vertex $\alpha$ in the projection. }
		\label{fig:claim6.1}
	\end{figure}
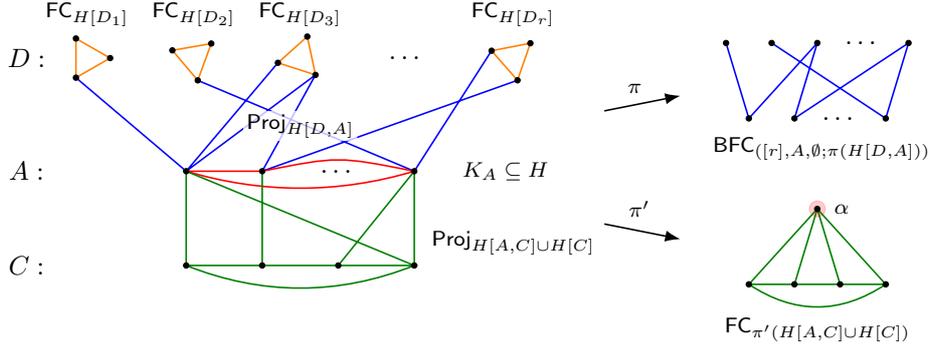
	
	\begin{claim}\label{fc join}
		If $H[A] = K_A$, then
		\[\mathsf{F}_{(D_1, \dots, D_r; A; C)} = 
		\mathsf{FC}_{H[D_1]} * \cdots * \mathsf{FC}_{H[D_r]} * 
		\mathsf{Proj}_{H[D,A]} * 
		\mathsf{Proj}_{H[A,C]\cup H[C]} * 
		\{ K_A \}.\]
	\end{claim}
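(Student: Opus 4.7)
The plan is to mirror the proof of Claim~\ref{join structure}, adapted to the factor critical setting; the principal new content is the verification of the two projection conditions that define $\mathsf{Proj}_{H[D,A]}$ and $\mathsf{Proj}_{H[A,C]\cup H[C]}$. A key auxiliary fact that I will use repeatedly is this: since $\nu(G)=(|V|-1)/2$ and $A\cup C$ is disjoint from $D(G)$, for every $u\in A\cup C$ any perfect matching of $(G+e_0)-u$ on $V\setminus\{u\}$ must contain the edge $e_0=vw$---otherwise it would lie in $G$ and force $u\in D(G)$. Removing $e_0$ from such a matching yields a matching $M_u$ of $G$ covering $V\setminus\{u,v,w\}$.

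For the forward inclusion, take $G\in \mathsf{F}_{(D_1,\dots,D_r;A;C)}$. The conditions $H[A]=K_A\subseteq G[A]\subseteq K_A$ and Theorem~\ref{ge-decomposition}(1) handle the $\{K_A\}$ and $\FC_{H[D_i]}$ factors, so only the two projection conditions remain. For $\pi(G[D,A])$: $A$-factor criticality is immediate from Theorem~\ref{ge-decomposition}(3); for the $Z$-factor criticality on $Z=[r-2]$, fix $a\in A$ and consider $M_a$---since each $|D_i|$ is odd and $N_G(D_i)\subseteq A$, for each $i\in [r-2]$ some vertex of $D_i$ must be matched by $M_a$ to $A\setminus\{a\}$, and projecting these edges gives a matching in $\pi(G[D,A])[Z,A]-a$ covering $Z$. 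For $\pi'(G[A,C]\cup G[C])$: factor criticality at $\alpha$ is the perfect matching of $G[C]$ given by Theorem~\ref{ge-decomposition}(2). For $u\in C$, observe that $M_u$ covers $C\setminus\{u\}$ and, since $|C|$ is even, contains an odd (hence positive) number of $A$-$C$ edges; let $\{v_0,\dots,v_k\}$ be their $C$-endpoints. Taking a perfect matching $M_C$ of $G[C]$, the symmetric difference $M_C\triangle (M_u\cap K_C)$ decomposes into paths and even cycles whose path endpoints are exactly $\{u,v_0,\dots,v_k\}$. Letting $P$ be the path from $u$ ending at some $v_j$, the matching $M_C\triangle P$ is a perfect matching of $G[C\setminus\{u,v_j\}]$; combined with the edge $\alpha v_j$ of $\pi'(G[A,C]\cup G[C])$ (present because $v_j$ has an $A$-neighbor in $G$), this gives the required perfect matching of $\pi'(G[A,C]\cup G[C])-u$.

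For the reverse inclusion, take $G$ in the join. The inclusion $H\subseteq G$ is clear. To establish $G+e_0\in \FC_H$, for each $u\in V$ I construct a perfect matching of $(G+e_0)-u$ by assembling: internal matchings of the $G[D_i]$ from their factor criticality; a lift of a suitable matching of $\pi(G[D,A])$---using $A$-factor criticality when $u\in D$ (to omit the component containing $u$) and $Z$-factor criticality when $u\in A\cup C$ (to avoid an appropriate $A$-vertex); the edge $e_0$ when $u\notin\{v,w\}$ to cover $v$ and $w$ jointly; and either a perfect matching of $G[C]$ (for $u\notin C$) or a lift of the factor critical matching of $\pi'(G[A,C]\cup G[C])-u$ (for $u\in C$). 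To identify the Gallai--Edmonds decomposition of $G$, the containment $D\subseteq D(G)$ follows from the same construction performed \emph{without} invoking $e_0$ (the $A$-factor criticality of $\pi(G[D,A])$ supplies the needed extension); conversely, a matching of $G$ covering $V\setminus\{u\}$ for $u\in A\cup C$ would require at least $r$ edges from $A$ to $D$ (one per odd-sized $D_i$), exceeding $|A|=r-1$. Connectedness of the factor critical $G[D_i]$ then gives the correct component structure of $G[D]$, and $A(G)=A$, $C(G)=C$ follow.

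The main obstacle is the factor criticality of $\pi'(G[A,C]\cup G[C])$ at vertices of $C$: the symmetric-difference swap along the alternating path from $u$ to an $A$-adjacent endpoint $v_j$ is the essential new ingredient, beyond what is needed in the perfect matching setting of Claim~\ref{perfect matching join}.
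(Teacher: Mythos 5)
Your proof is correct and follows essentially the same route as the paper's: in the forward direction you extract, from a perfect matching of $(G+e_0)-u$ (which must contain $e_0$), the matchings certifying that $\pi(G[D,A])$ is $(A,Z)$-factor critical and that $\pi'(G[A,C]\cup G[C])$ is factor critical; in the reverse direction you assemble perfect matchings of $(G+e_0)-u$ from the factors and identify the Gallai--Edmonds decomposition via the counting argument with $|A|=r-1$ and the odd components $D_i$. Two remarks. First, your alternating-path swap for $u\in C$ is correct but avoidable: the same counting you use elsewhere (each $D_i$ with $i\le r-2$ absorbs one vertex of $A$, and $|C\setminus\{u\}|$ is odd) shows that the perfect matching of $(G+e_0)-u$ contains \emph{exactly one} $A$--$C$ edge, so its restriction to that edge together with its $C$-edges already projects under $\pi'$ to the required perfect matching of $\pi'(G[A,C]\cup G[C])-u$; this is how the paper argues, so the swap is not really a ``new ingredient''. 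Second, the clause ``the edge $e_0$ when $u\notin\{v,w\}$'' is wrong as stated for $u\in D\setminus\{v,w\}$: when the lifted $A$-factor-critical matching places one $A$-vertex in each component other than the one containing $u$, the components $D_{r-1}$ and $D_r$ each lose one vertex to $A$ and the remainder is matched internally, so adding $e_0$ creates a parity conflict at $v$ (or $w$); the correct construction for $u\in D$ omits $e_0$ entirely, which is exactly what your later sentence establishing $D\subseteq D(G)$ does, so the argument as a whole is not damaged --- but the case description should be corrected so that $e_0$ is used only for $u\in A\cup C$.
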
\textbf{}
	\begin{proof}
		(See Figure \ref{fig:claim6.1} for an illustration.) First consider a graph $G\in \mathsf{F}_{(D_1, \dots, D_r; A; C)}$. We start by showing that $\pi(G[D,A])[Z,A]$ is $Z$-factor critical. Fix an arbitrary vertex $a\in A$. Note that $(G+e_0)-\{a\}$ contains a perfect matching $M$, and each component $D_i$ for $i\in [r-2]$ should have a unique vertex to be matched with a vertex in $A\setminus \{a\}$ via $M$. So we obtain a matching covering $Z$ in $\pi(G[D,A])[Z,A\setminus \{a\}]$ for any $a\in A$.

		It remains to show that $G_{A,C} = \pi'(G[A,C] \cup G[C])$ is factor critical. (The other terms of the join are obvious.) Let $u$ be a vertex in $C$. By assumption the graph $(G+e_0)-\{u\}$ has a perfect matching using the edge $e_0$. In this perfect matching, exactly $r-2$ vertices of $A$ are matched to vertices in $D$, while the remaining vertex of $A$ together with $C\setminus\{u\}$ admits a perfect matching. This shows that there is a perfect matching in $G_{A,C} - \{u\}$, and therefore $G_{A,C}$ is factor critical.
		
		Now we show the opposite inclusion. Let $G$ be a graph in the join. It is obvious that $H\subseteq G$. We first show that $G$ has Gallai--Edmonds decomposition $(D_1, \dots, D_r; A; C)$. Consider a matching $M$ in $G$ and let $D'\subseteq D$, $A'\subseteq A$, and $C'\subseteq C$ denote the vertices covered by $M$. Since $|D_i|$ is odd and $N_G(D_i) \subseteq A$ for all $i$, we have $|D'|\leq |D|- (c-|A|) = |D|-1$. Therefore
		\[|M| \leq
		\textstyle{\frac{1}{2}}(|D'| + |A'|+ |C'|) \leq
		\textstyle{\frac{1}{2}}(|D|-1 + |A|+ |C|) = \textstyle{\frac{1}{2}}(|V|-1),\]
		and it is easily seen that $\nu(G) = \frac{1}{2}(|V|-1)$. Moreover, equality is attained only when $A' =A$ and $C' = C$. From this it follows easily that $G$ has desired Gallai--Edmonds decomposition. We can also conclude that $G$ is not factor critical; for any vertex $a\in A$ the graph $G-\{a\}$ has no perfect matching. It remains to show that $G+e_0$ is factor critical, and for this it is sufficient to show that $(G+e_0)-\{u\}$ has a perfect matching for any vertex $u\in A\cup C$. 
		
		If $u\in A$, then $G[D_1\cup \cdots \cup D_{r-2} \cup (A\setminus\{u\})]$ has a perfect matching by the condition that  $\pi(G[D,A])[Z,A]$ is $Z$-factor critical. Together with perfect matchings on $G[C]$ and $G[D_{r-1}\cup D_r]+e_0$, this gives a perfect matching in $(G+e_0)-\{u\}$.
		
		If $u\in C$, then the condition on $\mathsf{Proj}_{H[A,C]\cup H[C]}$ implies that there is a vertex $u'\in A$ such that $G[\{u'\} \cup (C\setminus \{u\})]$ has a perfect matching. By the same argument above it follows that $G[D_1 \cup \dots \cup D_{r-2} \cup (A\setminus\{u'\})]$ has a perfect matching. Together with perfect matching on $G[D_{r-1}\cup D_r]+e_0$, we get a perfect matching in $(G+e_0)-\{u\}$.
	\end{proof}
	
	\subsection*{The endgame}
	In order to apply Lemma \ref{product} we need to find suitable acyclic matchings for each of the terms of the join in Claim \ref{fc join}. The acyclic matchings on the factors $\mathsf{FC}_{H[D_i]}$ can be dealt with by induction on $|D_i|$.
	
	\begin{claim}\label{projection 1 DA}
		There is an acyclic matching on $\mathsf{Proj}_{H[D,A]}$ such that any critical set $\sigma$ satisfies
		\[|\sigma| \leq 3|A|-1 + |H[D,A]|.\]
		Moreover the inequality is strict whenever $H[D,A]$ contains at least one edge. 
	\end{claim}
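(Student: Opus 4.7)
The plan is to mimic the strategy used in Claim \ref{second project} and Claim \ref{first-projection}, but now using the $(A,Z)$-factor critical families rather than the $A$-factor critical ones. We apply the Projection Lemma (Lemma \ref{iden}) with $\tau = H[D,A]$ and target family $\mathsf{Q} = \mathsf{BFC}_{([r], A, Z; \emptyset)}$, where $Z = [r-2]$. Note that $\mathsf{Q}$ is precisely the family of all $(A,Z)$-factor critical subgraphs of $K_{[r], A}$, and since $\pi(\mathsf{Proj}_{H[D,A]})$ consists exactly of those graphs in $\mathsf{Q}$ containing $\pi(H[D,A])$, part \emph{(\ref{proj lemma easy})} of Lemma \ref{iden} gives us the identification
\[
\pi(\mathsf{Proj}_{H[D,A]}) \;=\; \mathsf{BFC}_{([r], A, Z;\, \pi(H[D,A]))}.
\]

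Next I would invoke Proposition \ref{size-yz} on this projected family. It supplies an acyclic matching in which every critical set $\sigma'$ satisfies
\[
|\sigma'| \;\leq\; 2|A| + |Z| + |\pi(H[D,A])|,
\]
with strict inequality whenever $\pi(H[D,A])$ contains at least one edge. Using the Gallai--Edmonds identity $|A| = r - 1$ noted just before the join construction, we have $|Z| = r - 2 = |A| - 1$, so the bound simplifies to
\[
|\sigma'| \;\leq\; 3|A| - 1 + |\pi(H[D,A])|.
\]

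Finally I would lift this matching to $\mathsf{Proj}_{H[D,A]}$ via part \emph{(\ref{proj lemma main})} of Lemma \ref{iden}. That part produces an acyclic matching on $\mathsf{Proj}_{H[D,A]}$ whose critical sets $\sigma$ inject via $\pi$ into the critical sets of the projected matching, with the size relation $|\sigma| = |\pi(\sigma)| - |\pi(\tau)| + |\tau|$. Substituting the bound above and $\tau = H[D,A]$ yields
\[
|\sigma| \;\leq\; \bigl(3|A| - 1 + |\pi(H[D,A])|\bigr) - |\pi(H[D,A])| + |H[D,A]| \;=\; 3|A| - 1 + |H[D,A]|,
\]
with strictness preserved: if $H[D,A]$ contains an edge then so does $\pi(H[D,A])$, giving the strict inequality from Proposition \ref{size-yz}. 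There is no real obstacle here once one sees that the projection $\pi$ onto $K_{[r],A}$ precisely converts the $(A,Z)$-factor critical constraint defining $\mathsf{Proj}_{H[D,A]}$ into membership in $\mathsf{BFC}_{([r],A,Z;\cdot)}$, and that the arithmetic $2|A|+|Z| = 3|A|-1$ lands exactly on the desired bound; the only thing to watch is that $Z = [r-2]$ (rather than $\emptyset$ as in Claims \ref{first-projection} and \ref{second project}) is what turns the earlier $2|A|$-bound into the sharper $3|A|-1$ needed here.
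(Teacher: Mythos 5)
Your proposal is correct and follows essentially the same route as the paper's own proof: the same application of the Projection Lemma with $\tau = H[D,A]$ and $\mathsf{Q} = \mathsf{BFC}_{([r],A,Z;\emptyset)}$, the same invocation of Proposition \ref{size-yz} on the projected family, the same arithmetic $2|A|+|Z| = 3|A|-1$ using $|Z|=|A|-1$, and the same lifting with strictness preserved because an edge of $H[D,A]$ projects to an edge of $\pi(H[D,A])$. No gaps.
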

	
	\begin{proof}
		We apply Lemma \ref{iden} with $\tau = H[D,A]$ and $\mathsf{Q} = \mathsf{BFC}_{([r],A,Z; \emptyset)}$. By part {\em (\ref{proj lemma easy})} of Lemma \ref{iden} we have $\pi(\mathsf{Proj}_{H[D,A]}) = \mathsf{BFC}_{([r],A,Z; \pi(H[D,A]))}$. Proposition \ref{size-yz} implies that there is an acyclic matching on $\pi(\mathsf{Proj}_{H[D,A]})$ where any critical set $\sigma$ satisfies 
		\[\begin{array}{rcl}
		|\sigma| & \leq & 2|A| + |Z| + |\pi(H[D,A])|  \\
		& = & 3|A|-1 + |\pi(H[D,A])|.
		\end{array}\]
		Here we used that $|Z| = |A|-1$. Also, it follows from Proposition \ref{size-yz} that the inequality is strict whenever $\pi(H[D,A])$ contains at least one edge. By part {\em (\ref{proj lemma main})} of Lemma \ref{iden} we get an acyclic matching on $\mathsf{Proj}_{H[D,A]}$ where any critical set $\sigma$ satisfies $|\sigma|\leq 3|A|-1 + |H[D,A]|$. Moreover, if $H[D,A]$ contains at least one edge, then so does $\pi(H[D,A])$ which implies strict inequality. 
	\end{proof}
	
	\begin{claim}\label{projection 2 AC}
		There is an acyclic matching on $\mathsf{Proj}_{H[A,C]\cup H[C]}$ where any critical set $\sigma$ satisfies 
		\[|\sigma| \leq \textstyle{\frac{3}{2}}|C| + |H[A,C]| + |H[C]|.\]
	\end{claim}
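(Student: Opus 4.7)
The plan is to apply the Projection Lemma \ref{iden} followed by Proposition \ref{size-d0} used inductively, in direct analogy with Claims \ref{first-projection}, \ref{second project}, and \ref{projection 1 DA}. First I would observe that $|C|$ is even (by Theorem \ref{ge-decomposition}, $G[C]$ admits a perfect matching), so the vertex set $\{\alpha\} \cup C$ has odd cardinality and it makes sense to speak of factor critical subgraphs of $K_{\{\alpha\} \cup C}$.

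The main step is to take $\tau = H[A,C] \cup H[C]$ and let $\mathsf{Q}$ be the family of all factor critical subgraphs of $K_{\{\alpha\} \cup C}$; then $\mathsf{Proj}_{H[A,C] \cup H[C]}$ matches exactly the family $\mathsf{F}$ in the statement of Lemma \ref{iden}. Part \emph{(\ref{proj lemma easy})} immediately gives
\[\pi'(\mathsf{Proj}_{H[A,C] \cup H[C]}) = \mathsf{FC}_{\pi'(H[A,C] \cup H[C])},\]
a family of factor critical graphs on $\{\alpha\} \cup C$. Since $|\{\alpha\} \cup C| = |C|+1 < |V|$ (recall from the discussion preceding Claim \ref{fc join} that $A$ is non-empty and that $v, w$ lie in distinct components of $D$, so $C \subsetneq V$), Proposition \ref{size-d0} applied inductively yields an acyclic matching on this family whose critical sets $\bar{\sigma}$ satisfy
\[|\bar{\sigma}| \leq \textstyle{\frac{3}{2}}|C| + |\pi'(H[A,C] \cup H[C])|.\]

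Finally, I would invoke part \emph{(\ref{proj lemma main})} of Lemma \ref{iden} to pull this back to an acyclic matching on $\mathsf{Proj}_{H[A,C] \cup H[C]}$ whose critical sets $\sigma$ satisfy $|\sigma| = |\pi'(\sigma)| - |\pi'(\tau)| + |\tau|$. Substituting the inductive bound on $|\pi'(\sigma)|$, the $|\pi'(\tau)|$ terms cancel and we obtain $|\sigma| \leq \frac{3}{2}|C| + |H[A,C]| + |H[C]|$, as required. I do not foresee a serious obstacle here beyond verifying that the inductive hypothesis is legitimately applied to the smaller vertex set $\{\alpha\} \cup C$; the argument is a direct adaptation of the template already used for the other projection claims in the paper.
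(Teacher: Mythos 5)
Your proposal is correct and follows essentially the same route as the paper: apply Lemma \ref{iden} with $\tau = H[A,C]\cup H[C]$ and $\mathsf{Q}$ the factor critical subgraphs of $K_{\{\alpha\}\cup C}$, identify $\pi'(\mathsf{Proj}_{H[A,C]\cup H[C]})$ with $\mathsf{FC}_{\pi'(\tau)}$, bound its critical sets by the inductive hypothesis for Proposition \ref{size-d0} on the smaller vertex set of size $|C|+1<|V|$, and pull back via part \emph{(\ref{proj lemma main})}. The small extra checks you mention (that $|C|$ is even so $|\{\alpha\}\cup C|$ is odd, and that $|C|+1<|V|$) are exactly what legitimizes the induction, as in the paper.
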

	
	\begin{proof}
		We apply Lemma \ref{iden} with $\tau = H[A,C]\cup H[C]$ and 
		\[\mathsf{Q} = \{G\subseteq K_{(\{\alpha\}\cup C)} : G \text{ is factor critical}\}.\]
		By part {\em (\ref{proj lemma easy})} of Lemma \ref{iden} we have
		$\pi'(\mathsf{Proj}_{H[A,C]\cup H[C]}) = \mathsf{FC}_{\pi'(\tau)}$. Since $|C|+1 <|V|$ we can apply induction and find an acyclic matching on $\pi'(\mathsf{Proj}_{H[A,C]\cup H[C]})$ where any critical set $\sigma$ satisfies 
		$|\sigma| \leq \textstyle{\frac{3}{2}}|C| + |\pi'(\tau)|$.
		By part {\em (\ref{proj lemma main})} of Lemma \ref{iden} there is an acyclic matching on $\mathsf{Proj}_{H[A,C]\cup H[C]}$ where any critical set $\sigma$ satisfies $|\sigma|\leq \frac{3}{2}|C| + |H[A,C]| + |H[C]|$.
	\end{proof}
	
	We are now ready to finish the proof of Proposition \ref{size-d0}. Let $\mathsf{P} = \mathsf{F}_{(D_1,\dots, D_r; A; C)}$ be a non-empty subfamily of $\mathsf{F}$. Suppose there is an edge $e\in K_A$ which is not an edge in $H$. For any graph $G\in \mathsf{P}$, Lemma \ref{same-ge} implies that $G-e$ and $G+e$ have Gallai--Edmonds decomposition $(D_1, \dots, D_r; A; C)$. Deleting any vertex from $A$ shows that neither $G-e$ nor $G+e$ are factor critical. For any vertex $u$ the graph $(G+e_0)- \{u\}$ has a perfect matching which does not use any edge in $K_A$. This shows that $G-e, G+e \in \mathsf{P}$, and implies that $\mathsf{P}$ has a complete acyclic matching (Lemma \ref{boolean}). 
	
	We may therefore assume that $H[A] =K_A$, and so Claim \ref{fc join} applies. Since $|A|>0$ we have $|D|<|V|$, so by induction there is an acyclic matching on $\mathsf{F}_{H[D_1]}*\cdots *\mathsf{F}_{H[D_r]}$ where any critical set $\sigma$ satisfies
	\[\def\arraystretch{1.65}
	\begin{array}{rcl}
	|\sigma| & \leq & \textstyle{\sum}_{i=1}^r 
	\left( \textstyle{\frac{3}{2}}(|D_i|-1) + |H[D_i]| \right) \\
	& = & \textstyle{\frac{3}{2}}(|D|-|A|-1) + |H[D]|.
	\end{array}\]
	Here we used $|A| = r-1$. Also, we may assume that the inequality is strict whenever $H[D]$ contains at least one edge. 
	
	Finally, the term $\{K_{A}\}$ has an empty acyclic matching with a  single critical set of size $|K_{A}| = |H[A]|$. Summing up these bounds together with the bounds from Claims \ref{projection 1 DA} and \ref{projection 2 AC}, we find that there is an acylic matching on $\mathsf{P}$ where any critical set $\sigma$ satisfies 
	\[\def\arraystretch{1.65}
	\begin{array}{rcl}
	|\sigma| & \leq & \textstyle{\frac{3}{2}}(|D|-|A|-1) +3|A|-1 +\textstyle{\frac{3}{2}}|C| + |H| \\
	& = & \textstyle{\frac{3}{2}}(|V|-1) -1 +|H|.
	\end{array}\]
	Moreover, if $H$ is non-empty, then our choice of $e_0=vw$ implies that $H[D]$ or $H[D,A]$ is also non-empty. Therefore the above inequality is strict whenever $H$ is non-empty. 
	
	As in the previous proofs, it follows from Lemma \ref{combine-ge} that the union of the acyclic matchings on every non-empty family $\mathsf{F}_{(D_1, \dots, D_r; A; C)}$ gives an acyclic matching on $\mathsf{F}$ with the same bounds on the critical sets. Using the fact that $\mathsf{F}_1 = \mathsf{F}*\{e_0\}$ finishes the proof. \qed

	\section{Proof of Proposition \ref{size-yz}} \label{section_size-yz}
	Let $H$ be a fixed bipartite graph on the vertex classes $X$ and $Y$. Our goal is to find an acyclic mathcing $\mathcal{M}$ on $\mathsf{BFC}_{(X, Y, Z; H)}$ such that any critical set $\sigma$ satisfies 
	\begin{equation} \label{BFC-bound}
	|\sigma| \leq 2|Y| + |Z| + \max\{|H|-1, 0\}.
	\end{equation}
	When $X$ or $Y$ is empty, then by our convention, we have $\mathsf{BFC}_{(X, Y, Z; H)}=\{\emptyset \}$. In this case the ineqaulity obviously hold. So we only focus on the case when $X$ and $Y$ are both non-empty, which implies that $|X|>|Y|>|Z|$. The proof goes by induction on $|X\cup Y|$. It is easy to check that the bound holds when $|X\cup Y|\leq 3$.
	
	As in the proofs of Propositions \ref{size perfect m} and \ref{size-d0}, we are going to reduce the problem of finding the acyclic matching $\mathcal{M}$ by decomposing our family into simpler parts for which we can find suitable acyclic matchings. These will then be combined to form $\mathcal{M}$.

	\subsection*{A special case} We first deal with the special case when $H[(X\setminus Z), Y]  = K_{(X\setminus Z), Y}$. In this case we claim that  $\mathsf{BFC}_{(X,Y,Z;H)}$ has the join structure
	\[\mathsf{BFC}_{(X,Y,Z;H)} = \mathsf{BFC}_{(Y,Z,\emptyset; H[Z,Y])} * \{K_{(X\setminus Z), Y}\}.\]
	
	The inclusion $\subseteq$ is trivial since $G[Z,Y]$ is $Z$-factor critical for any $G \in \mathsf{BFC}_{(X,Y,Z; H)}$. 
	For the opposite inclusion, let $G$ be a graph in the join. By assumption, $Z$ can be perfectly matched with a subset $Z'\subseteq Y$, and $(Y\setminus Z')$ can be perfectly matched to a subset $Z''\subseteq X\setminus Z$. Moreover, $X\setminus (Z\cup Z'')$ is non-empty and any vertex in $x \in X\setminus (Z\cup Z'')$ is neighbor to every vertex in $Y$. It follows that $G$ is $Y$-factor critical, which proves the equality.
	
	By induction, there is an acyclic matching $\mathcal{M}'$ on $\mathsf{BFC}_{(Y,Z,\emptyset; H[Z,Y])}$ such that any critical set $\sigma$ satisfies
	\[|\sigma| \leq 2|Z| + \max\{|H[Z,Y]|-1, 0\}, \]
	and the term $\{K_{(X\setminus Z), Y}\}$ has an empty acyclic matching with a single critical set. Since $|Z|<|Y|$ and $|H| = |H[Z,Y]| + |H[(X\setminus Z), Y]|$, the bound in \eqref{BFC-bound} holds by Lemma \ref{product}, and finishes the special case when $H[(X\setminus Z), Y]  = K_{(X\setminus Z), Y}$.
	
	\subsection*{The general case} We assume from now on that $H[(X\setminus Z), Y] \neq K_{(X\setminus Z), Y}$, and we fix an edge $e_0 = vw$ where $v\in Y$, $w\in (X\setminus Z)$, and $e_0\notin H$. If possible, we choose $e_0$ such that $N_H(v)\neq \emptyset$. Note that if this is not possible, then for any vertex $y\in Y$ either $\deg_H(y)= 0$ or $(X\setminus Z) \subseteq N_H(y)$.
	
	Once the edge $e_0$ is fixed, define the subfamily $\mathsf{F}_0\subseteq \mathsf{BFC}_{(X,Y,Z; H)}$ as
	\[\mathsf{F_0} = \{G : G-e_0, G+e_0 \in \mathsf{BFC}_{(X,Y,Z;H)}  \}.\]
	This reduces our problem  to finding an acyclic matching on $\mathsf{F}_1 = \mathsf{BFC}_{(X,Y,Z;H)} \setminus \mathsf{F}_0$ (by Lemma \ref{boolean}). Now define the family $\mathsf{F} = \{G-e_0 : G\in \mathsf{F}_1\}$. In other words,  $\mathsf{F}$ is the family of graphs $G\subseteq K_{X,Y}$ which satisfy the conditions:
	\begin{itemize}
		\item $H\subseteq G$ and $e_0\notin G$,
		\item $G$ is not $Y$-factor critical, 
		\item $G+e_0$ is $Y$-factor critical, and
		\item $G[Z,Y]$ is $Z$-factor critical.
	\end{itemize}
	Since $\mathsf{F_1} = \mathsf{F} * \{e_0\}$, our problem is now further reduced to finding an acyclic matching on $\mathsf{F}$ (by Lemma \ref{product}).
	
	\medskip
	
	Recall from Section \ref{complete bipartite} that the components of $D$ in the Gallai--Edmonds decomposition of a bipartite graph consists of singletons. As we did in Section \ref{complete bipartite}, throughout this section we simply denote the Gallai--Edmonds decomposition of 
	$G\in \mathsf{F}$ as $(D;A;C)$. 
	
	\begin{claim} \label{bipartite ge conditions}
		For $G\in \mathsf{F}$ with Gallai--Edmonds decomposition $(D; A; C)$, the following hold:
		\begin{enumerate}
			\item $w\in D \subseteq X$,
			\item $A\subseteq Y$, and
			\item $v\in (C\cap Y)$, in particular $C\neq \emptyset$.
		\end{enumerate}
		Moreover, if $H\neq \emptyset$, then $H[(C\cap X), \{v\}]\neq \emptyset$ or $H[D,A]\neq \emptyset$.
	\end{claim}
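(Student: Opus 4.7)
The plan is to extract $\nu(G)=|Y|$ from the defining properties of $\mathsf{F}$ and then read off the pieces of the (bipartite) Gallai--Edmonds decomposition. Since $G+e_0$ is $Y$-factor critical and $e_0$ is incident to $w$, the graph $(G+e_0)-w=G-w$ has a matching covering $Y$; this is a matching of size $|Y|$ in $G$, so $\nu(G)=|Y|$, and it witnesses $w\in D(G)$ since it is a maximum matching of $G$ missing $w$. Because every maximum matching of $G$ covers $Y$, we have $D(G)\cap Y=\emptyset$, so $D(G)\subseteq X$ and $A(G)=N_G(D(G))\subseteq Y$. This handles (1) and (2).

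For (3) I would use tight Hall sets. By Remark~\ref{Halls observation}, the failure of $Y$-factor criticality for $G$ together with the Hall lower bound $|N_G(Y')|\geq|Y'|$ coming from $\nu(G)=|Y|$ produces a non-empty $Y^*\subseteq Y$ with $|N_G(Y^*)|=|Y^*|$; applying Remark~\ref{Halls observation} to $G+e_0$ gives $|N_{G+e_0}(Y^*)|>|Y^*|$. Since $e_0=vw$ is the only added edge, this forces $v\in Y^*$ and $w\notin N_G(Y^*)$. The key step is to argue that every maximum matching $M$ of $G$ restricts to a perfect matching between $Y^*$ and $N_G(Y^*)$: $M$ covers $Y\supseteq Y^*$ by $|Y^*|$ edges whose $X$-endpoints lie in $N_G(Y^*)$, and tightness $|N_G(Y^*)|=|Y^*|$ forces these endpoints to exhaust $N_G(Y^*)$. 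Hence every vertex of $N_G(Y^*)$ is covered by every maximum matching, so $N_G(Y^*)\cap D(G)=\emptyset$. Any $y\in Y^*$ has $N_G(y)\subseteq N_G(Y^*)$, so $y$ has no neighbor in $D(G)$, i.e.\ $y\notin A(G)$; combined with $Y^*\cap D(G)=\emptyset$, this gives $Y^*\subseteq C(G)$ and in particular $v\in C(G)\cap Y$.

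For the moreover clause I split on the case distinction used when $e_0$ was chosen. If $N_H(v)\neq\emptyset$, pick $x'\in N_H(v)\subseteq X$; then $vx'\in G$ and $v\in C(G)$ force $x'\notin D(G)$ (otherwise $v\in A(G)$), and since $A(G)\subseteq Y$ this forces $x'\in C(G)\cap X$, so $vx'\in H[(C\cap X),\{v\}]$. Otherwise, the structural remark recorded when $e_0$ was chosen says every $y\in Y$ with $\deg_H(y)>0$ satisfies $X\setminus Z\subseteq N_H(y)$, in particular $yw\in H$. Taking any edge of $H$ (which exists as $H\neq\emptyset$, with its $Y$-endpoint $y\neq v$ since $N_H(v)=\emptyset$), the property above gives $yw\in H\subseteq G$; so $y$ is a neighbor of $w\in D(G)$ and hence $y\in A(G)$, and the edge $yw$ lies in $H[D,A]$. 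The main obstacle is the bridge from ``tight Hall set'' to ``$Y^*\subseteq C(G)$'' in part (3) --- proving that $Y^*$ cannot be split between $A(G)$ and $C(G)$ is where the asymmetry between $G$ and $G+e_0$ and the rigidity of matchings on tight sets come together.
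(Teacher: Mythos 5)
Your proof is correct. Parts (1)--(2) and the ``moreover'' clause follow the paper's own route essentially verbatim: $\nu(G)=|Y|$ via the $Y$-saturating matching in $(G+e_0)-w=G-w$, hence $w\in D\subseteq X$ and $A\subseteq Y$; and the case split on whether $N_H(v)\neq\emptyset$, using $N_G(v)\cap D=\emptyset$ for $v\in C$ in one case and the structural remark $(X\setminus Z)\subseteq N_H(y)$ (so $yw\in H[D,A]$) in the other, is exactly the paper's argument. Where you genuinely diverge is part (3): the paper takes a vertex $x\in C\cap X$, looks at a $Y$-saturating matching $M$ of $(G+e_0)-x$, and argues by a counting/structure argument that $M$ must use an edge between $D$ and $C\cap Y$, which can only be $e_0$, forcing $v\in C\cap Y$. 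You instead extract a tight Hall set: non-$Y$-factor-criticality of $G$ plus Hall's condition from $\nu(G)=|Y|$ gives a non-empty $Y^*$ with $|N_G(Y^*)|=|Y^*|$, $Y$-factor-criticality of $G+e_0$ forces $v\in Y^*$, and the rigidity of maximum matchings on the tight set shows $N_G(Y^*)\cap D=\emptyset$, hence $Y^*\subseteq C$. Both arguments are elementary and correct, but yours has a small advantage in self-containedness: the paper's one-line argument starts from a vertex of $C\cap X$ and thus tacitly presupposes $C\cap X\neq\emptyset$ (which needs a short separate justification, e.g.\ that $C=\emptyset$ would force $X=D$ and make $G$ itself $Y$-factor critical), whereas your tight-set argument produces $v\in C\cap Y$, and hence $C\neq\emptyset$, as an output rather than an input.
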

	\begin{proof}
		First we observe that $\nu(G) = |Y|$ for every $G\in \mathsf{F}$. This is because $G+e_0$ is $Y$-factor critical, which implies that $(G+e_0)-\{w\}$ has a matching which covers $Y$ (and so does every maximum matching in $G$). This implies that $w\in D \subseteq X$ and $A\subseteq Y$. And for $x\in C\cap X$, $(G+e_0)-\{x\}$ has a matching $M$ covering $Y$, and $M$ should use an edge between $D$ and $C\cap Y$ which must be $e_0$. Hence, $v\in (C\cap Y)$. 
		
		Now suppose $H\neq\emptyset$. Every vertex of $X$ belongs to either $D$ or $C$, and since $v\in (C\cap Y)$ it follows that $N_H(v)\subseteq (C\cap X)$. Therefore $H[(C\cap X), \{v\}]\neq\emptyset$ 
		provided that $N_H(v)\neq\emptyset$. On the other hand, suppose $N_H(v)= \emptyset$ for every possible choice of $v$. (Recall our choice of $v$ when we fixed the edge $e_0 = vw$.) If $H\neq \emptyset$, then there is some vertex $y\in Y$ such that $\deg_H(y)>0$ which implies that $(X\setminus Z)\subseteq N_H(y)$. Since $w\in (X\setminus Z) \cap D$, we must have $y\in A$ and therefore $H[D,A]\neq \emptyset$.
	\end{proof}

	Since all graphs in $\mathsf{F}$ have the same matching number we can apply Lemma \ref{combine-ge}, thereby further reducing our problem  to finding an acyclic matching on each non-empty subfamily $\mathsf{F}_{(D;A;C)} \subseteq \mathsf{F}$.

	Now we fix a non-empty subfamily $\mathsf{F}_{(D;A;C)}$. 
	Our goal is to decompose this family in order to further reduce our problem. We write 
	\[C = C_X \cup C_Y \; \text{and}\; Z = Z_C \cup Z_D\]  where  $C_X = (C\cap X)$,  $C_Y = (C\cap Y)$,  $Z_C = (Z\cap C)$,  and $Z_D = (Z\cap D)$. 
	
	Define $\mathsf{F}_{YC}$
	to be the family of bipartite graphs $G \subseteq K_{C_X, Y}$ which satisfy the conditions:
	\begin{itemize}
		\item $H[C_X, Y]\subseteq G$, 
		\item $G[C]$ has a perfect matching, 
		\item $G[Z_C, Y]$ is $Z_C$-factor critical, and
		\item $G[C_X, (C_Y\setminus \{v\})]$ is $(C_Y\setminus\{v\})$-factor critical.
	\end{itemize}
	
	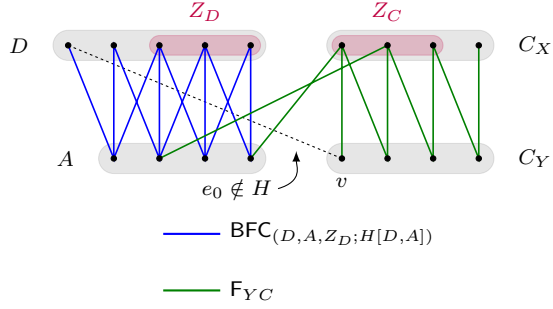
\begin{figure}
		\centering
		\begin{tikzpicture}
		\begin{scope}
		
		\coordinate (d1) at (0.0,3/2);
		\coordinate (d2) at (0.6,3/2);
		\coordinate (d3) at (1.2,3/2);
		\coordinate (d4) at (1.8,3/2);
		\coordinate (d5) at (2.4,3/2);
		
		\coordinate (a1) at (0.6,0);
		\coordinate (a2) at (1.2,0);
		\coordinate (a3) at (1.8,0);
		\coordinate (a4) at (2.4,0);
		
		\coordinate (cx1) at (3.6,3/2);
		\coordinate (cx2) at (4.2,1.5);
		\coordinate (cx3) at (4.8,1.5);
		\coordinate (cx4) at (5.4,1.5);
		
		\coordinate (cy1) at (3.6,0);
		\coordinate (cy2) at (4.2,0);
		\coordinate (cy3) at (4.8,0);
		\coordinate (cy4) at (5.4,0);
		
		\filldraw [black, thin, opacity=0.1] ($(d1)+(0,0.2)$) -- ($(d5)+(0,0.2)$) arc[start angle = 90, end angle = -90, radius = 0.2] -- ($(d1)+(0,-0.2)$) arc[start angle = 270, end angle = 90, radius = 0.2];
		
		\filldraw [black, thin, opacity=0.1] ($(cx1)+(0,0.2)$) -- ($(cx4)+(0,0.2)$) arc[start angle = 90, end angle = -90, radius = 0.2] -- ($(cx1)+(0,-0.2)$) arc[start angle = 270, end angle = 90, radius = 0.2];
		
		\filldraw [black, thin, opacity=0.1] ($(a1)+(0,0.2)$) -- ($(a4)+(0,0.2)$) arc[start angle = 90, end angle = -90, radius = 0.2] -- ($(a1)+(0,-0.2)$) arc[start angle = 270, end angle = 90, radius = 0.2];
		
		\filldraw [black, thin, opacity=0.1] ($(cy1)+(0,0.2)$) -- ($(cy4)+(0,0.2)$) arc[start angle = 90, end angle = -90, radius = 0.2] -- ($(cy1)+(0,-0.2)$) arc[start angle = 270, end angle = 90, radius = 0.2];
		
		\filldraw [purple, thin, opacity=0.2] ($(d3)+(0,0.13)$) -- ($(d5)+(0,0.13)$) arc[start angle = 90, end angle = -90, radius = 0.13] -- ($(d3)+(0,-0.13)$) arc[start angle = 270, end angle = 90, radius = 0.13];
		
		\filldraw [purple, thin, opacity=0.2] ($(cx1)+(0,0.13)$) -- ($(cx3)+(0,0.13)$) arc[start angle = 90, end angle = -90, radius = 0.13] -- ($(cx1)+(0,-0.13)$) arc[start angle = 270, end angle = 90, radius = 0.13];
		
		\draw [thin, dash pattern={on 1pt off 1.2pt}] (d1) -- (cy1);
		
		\draw [blue, semithick]
		(d1) -- (a1) -- (d2) -- (a2) -- (d3) -- (a3) -- (d4) -- (a4) -- (d5) (d3) -- (a1) (d4) -- (a2) (d5) -- (a3);
		
		\draw [green!50!black, semithick]
		(cy1) -- (cx1) -- (cy2) -- (cx2) -- (cy3) -- (cx3) -- (cy4) -- (cx4) (a2) -- (cx2) (a4) -- (cx1);
		
		\filldraw[black] (d1) circle (1pt);
		\filldraw[black] (d2) circle (1pt);
		\filldraw[black] (d3) circle (1pt);
		\filldraw[black] (d4) circle (1pt);
		\filldraw[black] (d5) circle (1pt);
		
		\filldraw[black] (a1) circle (1pt);
		\filldraw[black] (a2) circle (1pt);
		\filldraw[black] (a3) circle (1pt);
		\filldraw[black] (a4) circle (1pt);
		
		\filldraw[black] (cx1) circle (1pt);
		\filldraw[black] (cx2) circle (1pt);
		\filldraw[black] (cx3) circle (1pt);
		\filldraw[black] (cx4) circle (1pt);
		
		\filldraw[black] (cy1) circle (1pt);
		\filldraw[black] (cy2) circle (1pt);
		\filldraw[black] (cy3) circle (1pt);
		\filldraw[black] (cy4) circle (1pt);
		
		\node [below] at ($(cy1)+(0,-0.13)$) {\footnotesize $v$};
		\node[left] at ($(d1)+(-.4,0)$) {\footnotesize $D$};
		\node[left] at ($(a1)+(-.4,0)$) {\footnotesize $A$};
		\node[right] at ($(cx4)+(.4,0)$) {\footnotesize $C_X$};
		\node[right] at ($(cy4)+(.4,0)$) {\footnotesize $C_Y$};
		\node[above] at ($(d4)+(0,.2)$) {\color{purple} \footnotesize $Z_D$};
		\node[above] at ($(cx2)+(0,.2)$) {\color{purple} \footnotesize $Z_C$};
		\draw[thin, -latex] (2.76,-.4) .. controls  (3.1,-.3) and (3,-.1) .. (3,.1);
		\node [left] at (2.8,-.4) {\footnotesize $e_0 \notin H$};
		
		\draw [thick, blue] (2,-1) --++ (-.75,0);
		\draw [thick, green!50!black] (2,-1.75) --++ (-.75,0);
		\node [right] at (2,-1) {\footnotesize $\BFC_{(D,A,Z_D;H[D,A])}$}; 
		\node [right] at (2,-1.75) {\footnotesize $\mathsf{F}_{YC}$}; 
		
		\end{scope}
		\end{tikzpicture}
		\caption{The join structure in Claim \ref{size BFC DAC}.}
		\label{fig:claim7.2}
	\end{figure}
	
	\begin{claim} \label{size BFC DAC} For every non-empty $\mathsf{F}_{(D;A;C)} \subseteq \mathsf{F}$ we have
		\[\mathsf{F}_{(D;A;C)} = \mathsf{BFC}_{(D,A,Z_D; H[D,A])} * \mathsf{F}_{YC}.\]
	\end{claim}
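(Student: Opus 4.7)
The plan is to verify the equality by proving both inclusions, exploiting the fact that the Gallai--Edmonds structure of graphs in $\mathsf{F}_{(D;A;C)}$ forces the edges to live on $K_{D,A}\cup K_{C_X,Y}$, which is exactly the ground set of the join. In the forward direction, for $G \in \mathsf{F}_{(D;A;C)}$ we have $N_G(D) = A$ by Theorem \ref{ge-decomposition}, and combined with bipartiteness this partitions the edges of $G$ as $G[D,A] \cup G[C_X, Y]$ (and likewise $H$); we then check each piece lies in its family.

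For $G[D,A] \in \mathsf{BFC}_{(D,A,Z_D;H[D,A])}$, I would verify $A$-factor criticality by taking, for each $d\in D\subseteq D(G)$, a maximum matching of $G$ missing $d$ and restricting to $K_{D,A}$; since each GE component of $D$ is a singleton in the bipartite case, this yields a matching of $A$ into $D\setminus\{d\}$. The property that $G[Z_D,A]$ is $Z_D$-factor critical follows from Remark \ref{Halls observation} applied to $G[Z,Y]$, since $N_G(Z')\subseteq A$ whenever $Z'\subseteq Z_D$. For $G[C_X,Y]\in \mathsf{F}_{YC}$, the perfect matching on $G[C]$ comes from Theorem \ref{ge-decomposition}(2), $Z_C$-factor criticality is again Remark \ref{Halls observation}, and $(C_Y\setminus\{v\})$-factor criticality of $G[C_X, C_Y\setminus\{v\}]$ follows from $Y$-factor criticality of $G+e_0$: for $x\in C_X$, any matching $M$ covering $Y$ in $(G+e_0)-x$ must use $e_0$ to cover $v$ (because $N_G(C_Y)\subseteq C_X$), and the remaining edges of $M$ match $C_Y\setminus\{v\}$ into $C_X\setminus\{x\}$.

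In the reverse direction, take $G = G_1\cup G_2$ with $G_1, G_2$ from the two factors. I would first compute that $\nu(G) = |A|+|C_Y| = |Y|$ by combining the matching of $A$ from $A$-factor criticality of $G_1$ with the perfect matching of $G_2[C]$. The same construction gives $D\subseteq D(G)$, and conversely $\nu(G-x)<|Y|$ for $x\in C_X$ because $N_G(C_Y)\subseteq C_X$ forces any matching covering $Y$ to use at least $|C_Y|$ vertices of $C_X$. Combined with $D(G)\cap Y=\emptyset$ (every maximum matching covers $Y$), this gives $D(G)=D$, and then $A(G) = N_G(D) = N_{G_1}(D) = A$ (the last equality uses Remark \ref{Halls observation} applied to $G_1$ to ensure $A\subseteq N_{G_1}(D)$). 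The conditions $H\subseteq G$, $e_0\notin G$, non $Y$-factor criticality of $G$, and $Y$-factor criticality of $G+e_0$ are all direct: for instance, to cover $Y$ in $(G+e_0)-x$ with $x\in C_X$ one uses $e_0$ to match $v$ with $w$, extends $G_1-w$ to a matching of $A$, and uses the $(C_Y\setminus\{v\})$-factor criticality of $G_2$ to match $C_Y\setminus\{v\}$ into $C_X\setminus\{x\}$.

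The main obstacle is verifying that $G[Z,Y]$ is $Z$-factor critical in the reverse direction. The individual bounds $|N_{G_1}(Z'_D)| \geq |Z'_D|+1$ and $|N_{G_2}(Z'_C)| \geq |Z'_C|+1$ from the two factors are not enough, because both sets can lie in $A$ and overlap heavily. The key observation is that the perfect matching of $G_2[C]$ pairs each vertex of $Z'_C\subseteq C_X$ with a distinct vertex of $C_Y$, giving the dimension-free bound $|N_{G_2}(Z'_C)\cap C_Y| \geq |Z'_C|$. Since $N_{G_1}(Z'_D)\subseteq A$ is disjoint from $C_Y$, writing $Z' = Z'_D\cup Z'_C$ with both parts non-empty yields $|N_G(Z')| \geq |N_{G_1}(Z'_D)| + |N_{G_2}(Z'_C)\cap C_Y| \geq (|Z'_D|+1)+|Z'_C| > |Z'|$, and the singleton-part cases are immediate. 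By Remark \ref{Halls observation} this establishes $G[Z,Y]$ is $Z$-factor critical, closing the reverse inclusion.
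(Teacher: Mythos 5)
Your proof is correct and follows essentially the same route as the paper: both inclusions are checked directly, the forward direction using the Gallai--Edmonds structure and the forced use of $e_0$ to get $(C_Y\setminus\{v\})$-factor criticality, and the reverse direction verifying the decomposition and the $Z$-factor criticality of $G[Z,Y]$ via Hall's condition with the same split $Z'=Z'_D\cup Z'_C$, bounding the $Z'_C$ side by the perfect matching on $C$ and the $Z'_D$ side by $Z_D$-factor criticality. Your treatment is if anything slightly more explicit than the paper's (e.g.\ in pinning down $D(G)=D$ and $A(G)=A$), but it is the same argument.
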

	\begin{proof} (See Figure \ref{fig:claim7.2} for an illustration.) We first show that $G\in \mathsf{F}_{(D;A;C)}$ is contained in the join. It is a straight-forward consequence of Theorem \ref{ge-decomposition}  that $G[D,A] \in \mathsf{BFC}_{(D,A,Z_D; H[D,A])}$, so it remains to show $G[C_X, Y] \in \mathsf{F}_{YC}$. The first three conditions follow from the  defining properties of $\mathsf{F}$.
		To see that $G[C_X, (C_Y\setminus \{v\})]$ is $(C_Y\setminus\{v\})$-factor critical, note that
		$(G+e_0) - \{x\}$ has a matching which covers $C_Y$, for any $x\in C_X$. Such a matching must include the edge $e_0$, while the remaining edges form a perfect matching between $(C_X\setminus\{x\})$ and $(C_Y\setminus \{v\})$.
		
		Now consider a graph $G$ in the join. Clearly $H\subseteq G$ and $e_0\notin G$. 
		Note that $G$ has Gallai--Edmonds decomposition $(D;A;C)$ since $G[D,A]$ is $A$-factor critical and there are no edges between $D$ and $C$.
		This also implies that $G$ is not $Y$-factor critical, but with the condition that $G[C_X, (C_Y\setminus \{v\})]$ is $(C_Y\setminus\{v\})$-factor critical we get that $G+e_0$ is $Y$-factor critical. 
		Finally, we show that $G[Z,Y]$ is $Z$-factor critical. 
		By Hall's theorem this is equivalent to showing that $|N_G(Z')|>|Z'|$ for every $Z'\subseteq Z$. 
		For $Z'\subseteq Z_C$ this is true since $G[Z_C, Y]$ is $Z_C$-factor critical. If $(Z'\cap Z_D) \neq \emptyset$, then \[|N_G(Z')| \geq  |N_{G[C]}(Z'\cap Z_C) \cup N_{G[D,A]}(Z'\cap Z_D)| > |Z'\cap Z_C| + |Z'\cap Z_D| = |Z'|,\] since $G[C]$ has a perfect matching and $G[Z_D,A]$ is $Z_D$-factor critical. 
	\end{proof}
	
	The problem of finding an acyclic matching on $\mathsf{F}_{(D;A;C)}$ can now be reduced further by Lemma \ref{product}. 
	The term $\mathsf{BFC}_{(D,A,Z_D;H[D,A])}$ can be dealt with by induction, and so now we focus on finding an acyclic matching for the family $\mathsf{F}_{YC}$.
	
	\medskip
	
	We now make a further reduction. 
	For disjoint subsets $S,T\subseteq Z_C$ (which may be empty) we say that $G\in \mathsf{F}_{YC}$ is of {\em Type $(S,T)$} if 
	\[N_G(v) \cap Z_C = S \text{ and } N_G(A\cup \{v\})\cap Z_C = (S\cup T),\] 
	and partition the graphs in $\mathsf{F}_{YC}$ according to their Type. Suppose we have an acyclic matching on each part of this partition. We claim that their union is an acyclic matching on $\mathsf{F}_{YC}$. For contradiction, assume there is a directed cycle 
	\[(\sigma_0, \tau_0, \sigma_1, \tau_1, \cdots, \sigma_{t-1}, \tau_{t-1})\]
	where $\sigma_i, \sigma_{i+1}\subseteq \tau_i$ (according to Lemma \ref{cycle}). This would imply that 
	\[(N_{\sigma_i}(v)\cap Z_C) = (N_{\tau_i}(v)\cap Z_C) \supseteq (N_{\sigma_{i+1}}(v)\cap Z_C),\]
	for all $i$ (indices are taken modulo $t$). This shows that $(N_{\sigma_i}(v)\cap Z_C) = (N_{\tau_j}(v)\cap Z_C)$ and (by the same reasoning) $(N_{\sigma_i}(A\cup\{v\}) \cap Z_C) = (N_{\tau_j}(A\cup \{v\})\cap Z_C)$ for all $i$ and $j$, and consequently every graph in the directed cycle have the same Type. We have therefore reduced the problem to finding an acyclic matching on each non-empty family \[\mathsf{F}_{YC}^{(S,T)} = \{G\in \mathsf{F}_{YC} : G \text{ is of Type } (S,T)\}.\]
	
	Now consider a fixed non-empty subfamily $\mathsf{F}_{YC}^{(S,T)}$. We write 
	\[C_X = Q \cup S \cup T \cup R\]
	where $R = Z_C \setminus (S\cup T)$ and $Q = (C_X\setminus Z_C)$. 
	Note that this is not necessarily a partition of $C_X$ since some of the terms could be empty.
	
	Define families
	\[\mathsf{P}_v = \{G\subseteq K_{C_X, \{v\}} 
	: (N_G(v) \cap Z_C) = S, N_G(v)\neq\emptyset, H[C_X, \{v\}]\subseteq G\},\]
	\[\mathsf{P}_Q = \{G\subseteq K_{Q,A} 
	: H[Q, A]\subseteq G\},\]
	and 
	\[\mathsf{P}_A = \{G\subseteq K_{Z_C, A} : T\subseteq N_G(A) \subseteq (S\cup T), H[Z_C, A]\subseteq G\},\] with the additional condition that $N_G(A)\neq\emptyset$ if $Q = \emptyset$. 
	
	\begin{figure}
		\centering
		\begin{tikzpicture}
		\coordinate (cx1) at (0.0,3/2);
		\coordinate (cx2) at (0.6,3/2);
		\coordinate (cx3) at (1.2,3/2);
		\coordinate (cx4) at (1.8,3/2);
		
		\coordinate (cy1) at (0.0,0/2);
		\coordinate (cy2) at (0.6,0/2);
		\coordinate (cy3) at (1.2,0/2);
		\coordinate (cy4) at (1.8,0/2);
		
		\coordinate (a1) at (-1.0,0/2);
		\coordinate (a2) at (-1.6,0/2);
		\coordinate (a3) at (-2.2,0/2);
		\coordinate (a4) at (-2.8,0/2);
		
		\filldraw [black, thin, opacity=0.1] ($(cy1)+(0,0.2)$) -- ($(cy4)+(0,0.2)$) arc[start angle = 90, end angle = -90, radius = 0.2] -- ($(cy1)+(0,-0.2)$) arc[start angle = 270, end angle = 90, radius = 0.2];
		
		\filldraw [black, thin, opacity=0.1] ($(cx1)+(0,0.2)$) -- ($(cx4)+(0,0.2)$) arc[start angle = 90, end angle = -90, radius = 0.2] -- ($(cx1)+(0,-0.2)$) arc[start angle = 270, end angle = 90, radius = 0.2];
		
		\filldraw [black, thin, opacity=0.1] ($(a4)+(0,0.2)$) -- ($(a1)+(0,0.2)$) arc[start angle = 90, end angle = -90, radius = 0.2] -- ($(a4)+(0,-0.2)$) arc[start angle = 270, end angle = 90, radius = 0.2];
		
		\filldraw [purple, thin, opacity=0.2] ($(cx1)+(0,0.13)$) -- ($(cx3)+(0,0.13)$) arc[start angle = 90, end angle = -90, radius = 0.13] -- ($(cx1)+(0,-0.13)$) arc[start angle = 270, end angle = 90, radius = 0.13];
		
		\draw[semithick, violet] 
		(cx1)--(a1) (cx2)--(a3);
		\draw[semithick, gray]
		(cx1)--(cy1);
		\draw[semithick, green!50!black]
		(cx1) --(cy2) --(cx2) --(cy3) --(cx3) --(cy4) --(cx4);
		\draw[semithick, orange]
		(cx4) --(a1) (cx4) --(a2) (cx4) --(a3) (cx4) --(a4);
		
		\filldraw [black]
		(cx1) circle (1pt)
		(cx2) circle (1pt)
		(cx3) circle (1pt)
		(cx4) circle (1pt)
		
		(cy1) circle (1pt)
		(cy2) circle (1pt)
		(cy3) circle (1pt)
		(cy4) circle (1pt)
		
		(a1) circle (1pt)
		(a2) circle (1pt)
		(a3) circle (1pt)
		(a4) circle (1pt);
		
		\node [above] at ($(cx1)+(0,0.19)$) {\footnotesize $S$};
		\node [above] at ($(cx2)+(0,0.19)$) {\footnotesize $T$};
		\node [above] at ($(cx3)+(0,0.19)$) {\footnotesize $R$};
		\node [above] at ($(cx4)+(0,0.13)$) {\footnotesize $Q$};
		\node [below] at ($(cy1)+(0,-0.13)$) {\footnotesize $v$};
		
		\node [right] at ($(cx4)+(.4,0)$) {\footnotesize $C_X$};
		\node [right] at ($(cy4)+(.4,0)$) {\footnotesize $C_Y$};
		\node [left] at ($(cx1)+(-.4,0)$) {\color{purple} \footnotesize $Z_C$};
		\node [left] at ($(a4)+(-.4,0)$) {\footnotesize $A$};

		\draw[thick, gray] (-2,-1) --++(-0.75,0);
		\node[right] at (-2,-1) {\footnotesize $\mathsf{P}_v$}; 
		
		\draw[thick, violet] (0,-1) --++(-0.75,0);
		\node[right] at (0,-1) {\footnotesize $\mathsf{P}_A$}; 
		
		\draw[thick, orange] (-2,-1.75) --++(-0.75,0);
		\node[right] at (-2,-1.75) {\footnotesize $\mathsf{P}_Q = \{K_{Q,A}\}$};
		
		\draw[thick, green!50!black] (-2,-2.5) --++(-0.75,0);
		\node[right] at (-2,-2.5) {\footnotesize $\BFC_{(C_X,C_Y\setminus\{v\}, R; H[C_X,C_Y\setminus\{v\}])}$};

		\end{tikzpicture}    
		\caption{The join structure in Claim \ref{FYC join} when $H[Q,A]=K_{Q,A}$.}
		\label{fig:claim7.3}
	\end{figure}
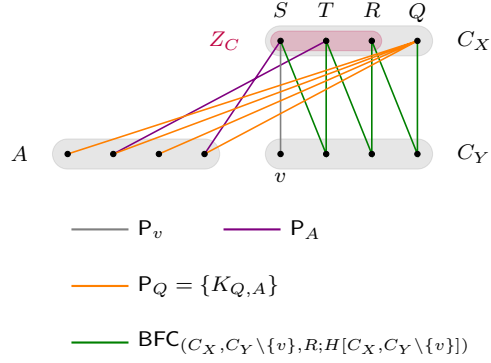
	
	\begin{claim} \label{FYC join}
		For any non-empty $\mathsf{F}_{YC}^{(S,T)}\subseteq \mathsf{F}_{YC}$ we have
		\[\mathsf{F}_{YC}^{(S,T)} = \mathsf{P}_{v} * \mathsf{P}_A * \mathsf{P}_Q *
		\mathsf{BFC}_{(C_X, C_Y\setminus\{v\}, R; H[C_X, C_Y\setminus \{v\}])}
		.\]
	\end{claim}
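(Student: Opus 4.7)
The strategy is to verify the equality by direct unpacking of definitions in both directions, using the observation that the edge set $K_{C_X, Y}$ partitions as $K_{C_X, \{v\}} \sqcup K_{Z_C, A} \sqcup K_{Q, A} \sqcup K_{C_X, C_Y \setminus \{v\}}$ (since $C_X = Q \sqcup Z_C$ and $Y = \{v\} \sqcup A \sqcup (C_Y \setminus \{v\})$); these four parts match the ambient sets of $\mathsf{P}_v$, $\mathsf{P}_A$, $\mathsf{P}_Q$, and the $\BFC$ factor respectively, so every graph under consideration splits uniquely as $G = G_v \cup G_A \cup G_Q \cup G_{C'}$.

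For the forward inclusion, given $G \in \mathsf{F}_{YC}^{(S,T)}$ I would define $G_v, G_A, G_Q, G_{C'}$ by intersecting $G$ with the four edge parts and check each membership. The conditions on $G_v$ and $G_Q$ are immediate from the Type condition and $H \subseteq G$, with $N_{G_v}(v) \neq \emptyset$ coming from the perfect matching on $G[C]$. For $G_A$, the Type condition and $N_G(v) \cap Z_C = S$ together give $T \subseteq N_{G_A}(A) \subseteq S \cup T$; when $Q = \emptyset$ the extra requirement $N_{G_A}(A) \neq \emptyset$ is forced since $Z_C$-factor criticality of $G[Z_C, Y]$ gives $|N_G(Z_C)| > |Z_C| = |C_Y|$, so $N_G(Z_C) \cap A$ must be non-empty. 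For the $\BFC$ term, $(C_Y\setminus\{v\})$-factor criticality of $G_{C'}$ is among the defining conditions of $\mathsf{F}_{YC}$, and $R$-factor criticality of $G_{C'}[R, C_Y \setminus\{v\}]$ follows from the fact that vertices of $R = Z_C \setminus (S\cup T)$ have no neighbors in $A \cup \{v\}$ by the definitions of $S$ and $T$, so $N_G(R') = N_{G_{C'}}(R')$ for every $R'\subseteq R$, and $Z_C$-factor criticality of $G[Z_C, Y]$ yields the strict inequality $|N_{G_{C'}}(R')| > |R'|$.

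For the reverse inclusion, assemble $G = G_v \cup G_A \cup G_Q \cup G_{C'}$ from pieces with the prescribed properties. The conditions $H[C_X, Y] \subseteq G$, the Type condition, and the $(C_Y\setminus\{v\})$-factor criticality of $G[C_X, C_Y \setminus\{v\}] = G_{C'}$ are read off directly. A perfect matching on $G[C]$ is obtained by choosing any $x_v \in N_{G_v}(v)$, matching $v$ to $x_v$, and then using $(C_Y\setminus\{v\})$-factor criticality of $G_{C'}$ to find a matching in $G_{C'} - x_v$ covering $C_Y \setminus \{v\}$.

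The main technical step is verifying that $G[Z_C, Y]$ is $Z_C$-factor critical, which by Remark \ref{Halls observation} amounts to $|N_G(Z')| > |Z'|$ for every non-empty $Z' \subseteq Z_C$. Split $Z' = Z'_S \sqcup Z'_T \sqcup Z'_R$ with $Z'_S = Z'\cap S$, $Z'_T = Z'\cap T$, $Z'_R = Z'\cap R$. When $Z' \subseteq R$ the equality $N_G(Z') = N_{G_{C'}}(Z')$ combined with $R$-factor criticality of $G_{C'}[R, C_Y \setminus \{v\}]$ closes the case. Otherwise I would first establish the non-strict bound $|N_{G_{C'}}(Z')| \geq |Z'|$ for arbitrary $Z' \subseteq C_X$ by a König-type argument: if $W = (C_Y\setminus\{v\}) \setminus N_{G_{C'}}(Z')$ is non-empty then $N_{G_{C'}}(W) \subseteq C_X \setminus Z'$, and $(C_Y \setminus \{v\})$-factor criticality of $G_{C'}$ gives $|N_{G_{C'}}(W)| > |W|$, which rearranges (using $|C_X| = |C_Y \setminus \{v\}|+1$) to $|N_{G_{C'}}(Z')| \geq |Z'|$. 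The extra neighbor upgrading $\geq$ to $>$ is then supplied by $v$ whenever $Z'_S \neq \emptyset$ (via $S = N_{G_v}(v) \cap Z_C$) and by some vertex of $A$ whenever $Z'_T \neq \emptyset$ (via $T \subseteq N_{G_A}(A)$); both contributions lie outside $C_Y \setminus \{v\}$ and so genuinely augment $N_{G_{C'}}(Z')$.
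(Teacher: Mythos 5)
Your forward inclusion is correct and in fact fills in details the paper leaves implicit (in particular, that when $Q=\emptyset$ the clause $N_G(A)\neq\emptyset$ in the definition of $\mathsf{P}_A$ is forced by $Z_C$-factor criticality of $G[Z_C,Y]$ together with $|Z_C|=|C_X|=|C_Y|$). The reverse inclusion, however, has a genuine gap in the verification that $G[Z_C,Y]$ is $Z_C$-factor critical. Your K\"onig-type base bound ``$|N_{G_{C'}}(Z')|\geq |Z'|$ for arbitrary $Z'\subseteq C_X$'' is false in the extreme case $Z'=C_X$: since $N_{G_{C'}}(Z')\subseteq C_Y\setminus\{v\}$ and $|C_Y\setminus\{v\}|=|C_X|-1$, one has $|N_{G_{C'}}(C_X)|=|C_X|-1$ exactly (your defect argument silently assumes $W\neq\emptyset$ or $Z'\subsetneq C_X$). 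The case $Z'=Z_C=C_X$ occurs precisely when $Q=\emptyset$, and there (note $S\neq\emptyset$ is forced, since $N_{G_v}(v)\subseteq Z_C$ must be non-empty) your upgrades give only $(|Z'|-1)+1=|Z'|$ when $T=\emptyset$, not the strict inequality Hall's condition requires. This is exactly the case the paper treats separately, using the extra clause in $\mathsf{P}_A$ ($N_G(A)\neq\emptyset$ when $Q=\emptyset$) together with the perfect matching on $G[C]$ to produce $|Z'|$ distinct neighbours in $C_Y$ plus one neighbour in $A$. Your proposal never invokes that clause in the reverse direction, and it cannot be dispensed with: e.g.\ take $Q=T=R=\emptyset$, $C_X=Z_C=S=\{x_1,x_2,x_3\}$, $C_Y=\{v,y_1,y_2\}$, $A=\{a\}$, $H=\emptyset$, $G_v=K_{C_X,\{v\}}$, $G_{C'}=K_{C_X,\{y_1,y_2\}}$ and $G_A=\emptyset$; then $G=G_v\cup G_{C'}$ satisfies all the conditions you actually use, but $|N_G(Z_C)|=3=|Z_C|$, so $G[Z_C,Y]$ is not $Z_C$-factor critical and $G\notin\mathsf{F}_{YC}$. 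So you need to add the missing case: if $Z'=C_X$ (hence $Q=\emptyset$), use $N_{G_A}(A)\neq\emptyset$ to find a neighbour of $Z'$ in $A$, and combine it with the $|Z'|$ neighbours in $C_Y$ coming from the perfect matching on $G[C]$ (or from $N_{G_{C'}}(C_X)=C_Y\setminus\{v\}$ together with $v\in N_G(S)$). With that repair your argument coincides in substance with the paper's proof.
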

	
	\begin{proof}
		(See Figure \ref{fig:claim7.3} for an illustration.) Consider a graph $G\in \mathsf{F}_{YC}^{(S,T)}$. Clearly we have $G[Q,A]\in \mathsf{P}_Q$. And it follows from the definition of $\mathsf{F}_{YC}^{(S,T)}$ that $G[C_X, \{v\}]\in \mathsf{P}_v$ and $G[Z_C, A]\in \mathsf{P}_A$. 
		We claim that $G' = G[C_X, (C_Y\setminus \{v\})]$ is $(C_Y\setminus\{v\}, R)$-factor critical. 
		The fact that $G'$ is $(C_Y\setminus\{v\})$-factor critical follows from the defining conditions of $\mathsf{F}_{YC}$. 
		The fact that $G'[R,C_Y\setminus \{v\}]$ is $R$-factor critical follows from the condition that $G[Z_C, Y]$ is $Z_C$-factor critical and the definition of $R$. Therefore $G$ is contained in the join. 
		
		Now consider a graph $G$ in the join. It is obvious that $H[C_X, Y]\subseteq G$. 
		The $\mathsf{BFC}$-term implies that $G[C_X, (C_Y\setminus\{v\})]$ is $(C_Y\setminus\{v\})$-critical, 
		and from the assumption $N_G(v)\neq\emptyset$ it follows that $G[C]$ has a perfect matching. 
		We now show that $G[Z_C, Y]$ is $Z_C$-factor critical. By Hall's marriage theorem this is equivalent to showing that $|N_G(Z')| > |Z'|$ for every $Z'\subseteq Z_C$.
		If $Z' = C_X$, then we must have $Q = \emptyset$. 
		In this case we have the additional condition $N_G(A)\neq \emptyset$. 
		Therefore $Z'$ has a neighbor in $A$, 
		and since $G[C]$ has a perfect matching it follows that $|N_G(Z')| > |Z'|$. 
		Now suppose $Z'\neq C_X$. If $Z'\subseteq R$, then we are done by the condition on the $\mathsf{BFC}$-term, so we are left with the case when $Z'$ contains at least one vertex from $(S\cup T)$. Since $G[C_X,(C_Y\setminus\{v\})]$ is $(C_Y\setminus \{v\})$-factor critical there is a matching in $G[C_X,(C_Y\setminus \{v\})]$ which covers $Z'$. Moreover, every vertex in $(S\cup T)$ has at least one neighbor in $A\cup \{v\}$, and therefore $|N_G(Z')|>|Z'|$. We have shown that $G\in \mathsf{F}_{YC}$, and it follows from the definition of $\mathsf{P}_v$ and $\mathsf{P}_A$ that $G$ is of Type $(S,T)$.
	\end{proof}
	
	We can handle the $\mathsf{BFC}$-term in the join in Claim \ref{FYC join} by induction. The term $\mathsf{P}_Q$ has a complete acyclic matching when $H[Q,A]\ne K_{Q,A}$, and an empty matching with a single critical set $H[Q,A]$ otherwise (by Lemma \ref{boolean}). So it remains to find acyclic matchings for the terms $\mathsf{P}_v$ and $\mathsf{P}_A$.
	
	\begin{claim} \label{size ST}
		There is an acyclic matching on $\mathsf{P}_v *  \mathsf{P}_A$ such that any critical set $\sigma$ satisfies
		\[|\sigma| \leq |S| + |T| + \max\{|H[C_X, \{v\}]|-1, 0\} + |H[Z_C, A]| +1. \]
	\end{claim}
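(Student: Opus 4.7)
The plan is to apply the Join Lemma (Lemma~\ref{product}) to reduce the claim to bounding critical-set sizes in $\mathsf{P}_v$ and $\mathsf{P}_A$ separately: I will produce an acyclic matching on $\mathsf{P}_v$ whose critical sets have size at most $|S|+\max\{|H[C_X,\{v\}]|-1,0\}+1$, and one on $\mathsf{P}_A$ whose critical sets have size at most $|H[Z_C,A]|+|T|$, and then combine them. For $\mathsf{P}_v$, the condition $N_G(v)\cap Z_C=S$ fixes the edges $S\times\{v\}$ in every $G$ and excludes the edges $(T\cup R)\times\{v\}$, leaving only the edges in $(Q\setminus N_H(v))\times\{v\}$ free, subject to $N_G(v)\ne\emptyset$. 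When the forced set $S\cup (N_H(v)\cap Q)$ is non-empty the non-emptiness constraint is automatic, and the Boolean Lemma (Lemma~\ref{boolean}) applied to any free edge either yields a complete matching or leaves $\mathsf{P}_v$ as a singleton with critical set of size $|S|+|N_H(v)\cap Q|\le |S|+|H[C_X,\{v\}]|$; in the residual case $S=\emptyset$, $H[C_X,\{v\}]=\emptyset$, $Q\ne\emptyset$, the family coincides with the non-empty subsets of $Q\times\{v\}$, which admits an acyclic matching with a single critical set of size $1$ via the standard pairing $\sigma\leftrightarrow\sigma\triangle\{e_0\}$.

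For $\mathsf{P}_A$, I apply the Join Lemma a second time to the vertex partition $K_{Z_C,A}=\bigcup_{z\in Z_C}K_{\{z\},A}$. Set $T_H=\{t\in T: N_H(t)\cap A\ne\emptyset\}$ and $T_f=T\setminus T_H$. The factor at $z\in R$ is $\{\emptyset\}$ (assuming $H[R,A]=\emptyset$, else $\mathsf{P}_A=\emptyset$ and the claim is vacuous); each factor at $z\in S\cup T_H$ is a Boolean lattice above $H[\{z\},A]$ which, by the Boolean Lemma, has either a complete matching or a singleton critical set of size $|H[\{z\},A]|$; and each factor at $z\in T_f$ is the family of non-empty subsets of $K_{\{z\},A}$, contributing a single critical set of size $1$. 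Summing these contributions gives a critical-set bound of $|H[Z_C,A]|+|T_f|\le|H[Z_C,A]|+|T|$. The global constraint $N_G(A)\ne\emptyset$ that is only active when $Q=\emptyset$ is automatic as soon as $T\ne\emptyset$, and in the remaining slice $Q=T=\emptyset$ it is absorbed by a Boolean argument completely analogous to the one used for $\mathsf{P}_v$.

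Combining the two matchings via the Join Lemma yields an acyclic matching on $\mathsf{P}_v*\mathsf{P}_A$ with critical set size at most $|S|+\max\{|H[C_X,\{v\}]|-1,0\}+1+|H[Z_C,A]|+|T|$, which is exactly the target bound. The main technical obstacle will be the careful bookkeeping across the several degenerate configurations — empty forced set in $\mathsf{P}_v$, vanishing $Q$ triggering a global non-emptiness in $\mathsf{P}_A$, and the boundary case $H[C_X,\{v\}]=\emptyset$ where the "$-1$" inside the $\max$ becomes inactive: one must verify that whenever the Boolean argument on one side is forced to emit a size-$1$ critical set instead of a complete matching, the corresponding savings on the other side (where the forced $H$-edges become absent) compensate exactly, so that the final sum stays within the claimed bound.
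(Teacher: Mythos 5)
Your treatment of $\mathsf{P}_v$ is essentially the paper's: split off the forced edges $K_{(S\cup N_{H}(v)),\{v\}}$, use the Boolean lemma on a free edge into $Q$, and handle the residual slice ($S=\emptyset$, $H[C_X,\{v\}]=\emptyset$, $Q\neq\emptyset$) as the family of non-empty subsets with one critical set of size $1$; the resulting bound $|S|+\max\{|H[C_X,\{v\}]|-1,0\}+1$ matches the paper's. For $\mathsf{P}_A$ you take a genuinely different route: you decompose $K_{Z_C,A}$ vertex-by-vertex and apply the Join lemma with a Boolean or ``non-empty subsets'' matching at each $z\in Z_C$, getting $\sum_z|H[\{z\},A]|+|T_f|\le |H[Z_C,A]|+|T|$, whereas the paper first plays one Boolean move on an edge of $K_{(N''\cup S),A}\setminus H$ (which either completes the matching or isolates the degenerate case) and, in the remaining case $H[Z_C,A]=K_{(N''\cup S),A}$, invokes the Projection lemma \ref{iden} onto $K_{(T\setminus N''),\{\bar a\}}$ to produce a single critical set of size $|T\setminus N''|$. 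Your version is more elementary (no Projection lemma) and gives the same bound. The one place where your stated plan does not literally go through is the slice you flag at the end: when $Q=T=\emptyset$ and $H[Z_C,A]=\emptyset$, the global constraint $N_G(A)\neq\emptyset$ forces $\mathsf{P}_A$ to be the non-empty subsets of $K_{S,A}$, whose best acyclic matching has a critical set of size $1>|H[Z_C,A]|+|T|=0$, so the per-factor bound for $\mathsf{P}_A$ fails there and cross-factor compensation is genuinely required. It does work out, exactly as you predict: in that slice $Q=\emptyset$ forces $N_{H}(v)\subseteq S$, so $\mathsf{P}_v$ is the singleton $\{K_{S,\{v\}}\}$ with critical set of size $|S|$, and the join's critical sets have size $|S|+1\le |S|+\max\{|H[C_X,\{v\}]|-1,0\}+1$; this is precisely the case the paper isolates (its ``single critical set $\{e\}$'' case, where it proves the bound $|S|+1$ directly on $\mathsf{P}_v*\mathsf{P}_A$ rather than factorwise). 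With that one verification written out, your argument is complete and correct.
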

	
	\begin{proof}
		We start with the term $\mathsf{P}_v$. Set $H' = H[C_X, \{v\}]$ and $N' = N_{H'}(v)$. We can write
		\[ \mathsf{P}_v = \{ K_{(N'\cup S), \{v\}} \}
		* \mathsf{P}'
		\]
		where 
		$\mathsf{P}' = \{
		G : G\subseteq K_{(Q\setminus N'), \{v\}} \text{ and } G\neq \emptyset \text{ if }
		(N'\cup S) = \emptyset\}$.
		
		We now use Lemma \ref{boolean} to find an acyclic matching on $\mathsf{P}'$. First consider the case $(N'\cup S) \neq \emptyset$. If $(Q\setminus N') \neq \emptyset$, then $\mathsf{P}'$ has a complete acyclic matching, and otherwise $\mathsf{P}' = \{\emptyset\}$ and there is an acyclic matching on $\mathsf{P}'$ with a single critical set of size $0$. 
		In the case $(N'\cup S) = \emptyset$, we have 
		$\mathsf{P}_v = \{
		G : \emptyset \neq G \subseteq K_{Q, \{v\}} \}$ where $Q\neq \emptyset$. In this case $\mathsf{P}_v$ has an acyclic matching with a single critical set of size $1$.  Using Lemma \ref{product} we see that $\mathsf{P}_v$ has an acyclic matching where any critical set $\sigma$ satisfies
		\[|\sigma|\leq
		\begin{cases}
		|N' \cup S| & \text{ when } (N' \cup S) \neq \emptyset, \\
		1 & \text{ when } (N' \cup S) = \emptyset. 
		\end{cases}
		\]
		In either case, any critical set $\sigma$ will satisfy\textbf{}
		\[|\sigma| \leq |S| + \max\{ |H[C_X, \{v\}]|-1, 0 \} +1.\]
		
		Now we consider the term $\mathsf{P}_A$, and set $H'' = H[Z_C, A]$ and $N'' = N_{H''}(A)$. 
		First consider the case when $A = \emptyset$, which implies that $\mathsf{P}_A = \{\emptyset\}$. Here there is an empty acyclic matching with a single critical set of size $0$. By Lemma \ref{product} and the acyclic matching for $\mathsf{P}_v$ found above, we have an acyclic matching on $\mathsf{P}_v * \mathsf{P}_A$ satisfying the desired bound. So we may suppose $A\neq \emptyset$.
		
		If there is an edge $e \in K_{(N''\cup S), A}$ with $e\not\in H''$,  then we have a complete acyclic matching $\{ (G-e, G+e) : G\in \mathsf{P}_A \}$, or an acyclic matching $\{ (G-e, G+e) : G\in \mathsf{P}_A\setminus \{\{e\} \} \}$ with a single critical set $\{e\}$ on $\mathsf{P}_A$. In the former case, we have a complete acyclic matching on $\mathsf{P}_v * \mathsf{P}_A$ (by Lemma \ref{product}). And the latter case occurs exactly when $Q=\emptyset$, $N''=\emptyset$ and $T=\emptyset$. Note that if $Q=\emptyset$, then $S\neq\emptyset$ by assumption that $N_G(v)$ is non-empty for $G\in \mathsf{P}_v$, and $N'\subseteq S$. Therefore by Lemma \ref{product}, we can find an acyclic matching  on $\mathsf{P}_v * \mathsf{P}_A$ where any critical set $\sigma$ satisfies
		$$|\sigma|\leq |S|+1.$$

		So we may assume $H'' = K_{(N''\cup S), A}=K_{N'', A}$.  We can write 
		\[\mathsf{P}_A = \{K_{N'', A}\} * \mathsf{P}'' \]
		where $\mathsf{P}'' = \{G : G\subseteq K_{(T\setminus N''), A}, N_G(A) = (T\setminus N'')\}$. 
		
		We now use Lemma \ref{iden} to find an acyclic matching on $\mathsf{P}''$. 
		If $(T\setminus N'') = \emptyset$, then $\mathsf{P}'' = \{\emptyset\}$ and we have an empty acyclic matching with a single critical set of size $0$.  Now suppose $(T \setminus  N'') = \{v_1, \dots, v_m\} \neq \emptyset$ and 
		set \[E_i = \{e\in K_{(T \setminus N''), A} : e \text{ incident to } v_i\}.\] This  gives us a partition of $K_{(T\setminus N''), A}$ and a projection map $\pi: K_{(T\setminus N''), A} \to K_{(T\setminus N''), \{\bar{a}\}}$. Applying Lemma \ref{iden} with $\tau=\emptyset$ and $\bar{\mathsf{F}} = \{ K_{(T\setminus N''), \{\bar{a}\}} \}$, we find that $\mathsf{P}''$ 
		has an acyclic mathcing with a single critical set of size $|T\setminus N''|$. By Lemma \ref{product} we have an acyclic matching on $\mathsf{P}_A$ where any critical set $\sigma$ satisfies
		\[|\sigma| \leq |K_{N'', A}| + |T\setminus N''| \leq |H[Z_C, A]| + |T|.\]
		Together with the bound from the acyclic matching on $\mathsf{P}_v$ found above, we get the desired bound by Lemma \ref{product}.
	\end{proof}
	
	Using the join structure in Lemma \ref{FYC join}, there is an an acyclic matching on $\mathsf{F}_{YC}^{(S,T)}$ where any critical set $\sigma$ satisfies
	\[
	\def\arraystretch{1.75}
	\begin{array}{rcll}
	|\sigma| & \leq & |S| + |T| + 1 +|H[Z_C, A]|  + \max\{|H[C_X, \{v\}]| - 1, 0\} &  \text{(Claim \ref{size ST})} \\
	&& + |H[Q,A]| & \text{(Lemma \ref{boolean})} \\
	&& + 2|C_Y\setminus \{v\}| + |R| + \max\{|H[C_X, (C_Y\setminus \{v\})]|-1, 0\} & \text{(induction)} \\
	&\leq & 2|C_Y| + |Z_C| - 1 + |H[C_X,A]| + \max\{|H[C_X, C_Y]|-1, 0\}.
	\end{array}\]
	As we observed earlier, the union of all acyclic matchings on the non-empty subfamilies $\mathsf{F}_{YC}^{(S,T)}\subseteq \mathsf{F}_{YC}$ gives us an acyclic matching on $\mathsf{F}_{YC}$ with the same bound as above. 
	
	Using join structure in Claim \ref{size BFC DAC} and the induction hypothesis, there is an acyclic matching on $\mathsf{F}_{(D; A; C)}$ where any critical set $\sigma$ satisfies
	\[
	\def\arraystretch{1.75}
	\begin{array}{rcll}
	|\sigma| &\leq & 2|C_Y| + |Z_C| - 1 + |H[C_X,A]| + \max\{|H[C_X, C_Y]|-1, 0\} \\
	&& + 2|A| + |Z_D| + \max\{|H[D,A]|-1, 0\} & \text{(induction)} \\ 
	&\leq & 2|Y| + |Z| -1  + |H[C_X, A]|  & \\ && + \max\{|H[C_X, C_Y]|-1, 0\} + \max\{|H[D,A]|-1, 0\} &  \\
	& \leq & 2|Y| + |Z| -1 + \max\{|H|-1, 0 \} & \text{(Claim \ref{bipartite ge conditions})}
	\end{array}\]
	Taking the union of all acyclic matchings on the non-empty subfamilies $\mathsf{F}_{(D; A; C)}\subseteq \mathsf{F}$ gives an acyclic matching in $\mathsf{F}$ with the same bound as above. Finally, since $\mathsf{F}_1 =\mathsf{F} * \{e_0\}$ we get the desired bound by Lemma \ref{product}.  \qed

	\section{Application to rainbow matching problems} \label{rainbow applications}
	In this section, we prove Theorems \ref{drisko-general} and \ref{rainbow-matching-result}. 
	A simplicial complex $\mathsf{K}$ is called \textit{near-$d$-Leray} (over the field $\mathbb{F}$) if
	the reduced homology $ \tilde{H}_i(\lk_\mathsf{K}(\sigma))$ over $\mathbb{F}$ vanishes 
	for every non-empty face $\sigma \in K$ and $i\geq d$. With this terminology, Theorem \ref{link-result} can be restated that for every $k\geq 2$,
	\begin{itemize}
		\item $\NM_k(G)$ is near-$(3k-4)$-Leray for an arbitary graph $G$, and
		\item $\NM_k(G)$ is near-$(2k-3)$-Leray for a bipartite graph $G$.
	\end{itemize}

	The near-$d$-Leray property has the following consequence. Here, a matroid on $V$ is a non-void simplicial complex $\mathsf{M}$ which satisfies the augmentation property, that is, if $\sigma, \tau \in \mathsf{M}$ and $|\sigma|<|\tau|$, then there exists $v\in \tau\setminus \sigma$ such that $\sigma \cup \{v\} \in \mathsf{M}$. We only consider loopless matroids, that is, $\{v\} \in \mathsf{M}$ for every $v\in V$. The rank function $\rho$ of $\mathsf{M}$ assigns to every subset $W\subseteq V$ the number $\rho(W)=\max\{|\sigma|:\sigma\in \mathsf{M}, \sigma \subseteq U\}$.

	\begin{thm}[\cite{holmsen_colorful_caratheodory}]
		\label{holmsen}
		Let $\mathsf{K}$ be simplicial complex on $V$ which is near-$d$-Leray over the rational field, and let $\mathsf{M}$ be a
		matroid on $V$ with the rank function $\rho$ such that $\rho(V)\geq d+2$. If $\mathsf{M}$ is a subcomplex of $\mathsf{K}$,
		then there exists a face $\sigma \in \mathsf{K}$ such that $\rho(V \setminus \sigma) \leq d$. 
	\end{thm}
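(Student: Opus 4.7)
The plan is to reduce Theorem~\ref{holmsen} to the classical Kalai--Meshulam matroid theorem for genuinely $d$-Leray complexes, which asserts that if $\mathsf{K}$ is $d$-Leray and $\mathsf{M}\subseteq\mathsf{K}$ is a matroid of rank $\rho(V)\geq d+1$, then some face $\sigma\in\mathsf{K}$ satisfies $\rho(V\setminus\sigma)\leq d$. Combining Proposition~\ref{hereditary} with Corollary~\ref{near d ler} shows that every near-$d$-Leray complex is $(d+1)$-Leray, so applying the Kalai--Meshulam theorem at level $d'=d+1$ under the hypothesis $\rho(V)\geq d+2$ immediately yields some $\sigma_0\in\mathsf{K}$ with $\rho(V\setminus\sigma_0)\leq d+1$. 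The genuinely new content of the theorem is then to sharpen this bound by one, which is precisely the gap between $d$-Leray and near-$d$-Leray.

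To close that gap I would argue by induction on $|V|$. Fix a vertex $v\in V$, which is a face of $\mathsf{K}$ because $\mathsf{M}$ is loopless. The link $\lk_\mathsf{K}(v)$ is in fact $d$-Leray outright, not merely near-$d$-Leray: for any $\tau\in\lk_\mathsf{K}(v)$, including $\tau=\emptyset$, the identity $\lk_{\lk_\mathsf{K}(v)}(\tau)=\lk_\mathsf{K}(\tau\cup\{v\})$ realises the iterated link as the link of a non-empty face of $\mathsf{K}$, hence it has vanishing homology in dimensions $\geq d$. Setting $V'=\{u\in V\setminus\{v\}:\{u,v\}\in\mathsf{M}\}$, the contraction $\mathsf{M}/v$ restricted to its non-loop set $V'$ is a loopless matroid of rank $\rho(V)-1\geq d+1$ which embeds as a subcomplex of the $d$-Leray induced subcomplex $\lk_\mathsf{K}(v)[V']$. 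Applying the classical Kalai--Meshulam theorem there produces a face $\tau\in\lk_\mathsf{K}(v)[V']$ with $\rho_{\mathsf{M}/v}(V'\setminus\tau)\leq d$, and the candidate $\sigma=\tau\cup\{v\}\in\mathsf{K}$ is what I would offer as the witness.

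The hard part will be the final rank bookkeeping. The naive estimate only gives $\rho_\mathsf{M}(V\setminus\sigma)\leq \rho_{\mathsf{M}/v}((V\setminus\sigma)\cap V')+1\leq d+1$, because vertices in $(V\setminus\sigma)\setminus V'$, which are loops of $\mathsf{M}/v$ but not of $\mathsf{M}$, can still contribute to $\rho_\mathsf{M}(V\setminus\sigma)$. One natural remedy is to iterate: every such bad $u$ lies outside $\sigma$, so one can pass to the induced subcomplex $\mathsf{K}[V\setminus\{u\}]$, which remains near-$d$-Leray by Proposition~\ref{hereditary}, and invoke the outer induction hypothesis on $\mathsf{M}-u$ provided $\rho(V\setminus\{u\})\geq d+2$; an appropriate choice of $v$ (for example, a $v$ lying in many bases so that $V\setminus V'$ is small) keeps this iteration terminating. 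Making this combinatorial bookkeeping rigorous, or, as in \cite{holmsen_colorful_caratheodory}, recasting the argument as a direct topological colorful Carath\'eodory argument in which Mayer--Vietoris replaces the inductive reduction, is the essential ingredient that genuinely exploits the near-$d$-Leray hypothesis rather than merely its $(d+1)$-Leray consequence.
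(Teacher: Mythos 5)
This paper does not actually prove Theorem \ref{holmsen}: it is imported verbatim from \cite{holmsen_colorful_caratheodory}, so there is no internal argument to compare yours against, and your proposal has to stand on its own. Its first reduction is fine: Proposition \ref{hereditary} together with Corollary \ref{near d ler} does show that a near-$d$-Leray complex is $(d+1)$-Leray, and the Kalai--Meshulam topological colorful Helly theorem then produces a face $\sigma_0$ with $\rho(V\setminus\sigma_0)\leq d+1$. But that is the easy part; the entire content of Theorem \ref{holmsen} is the improvement from $d+1$ to $d$, and your argument for that improvement has a genuine gap which you flag yourself and then leave unresolved.

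Concretely, the contraction step loses one unit of rank for a reason that has nothing to do with parallel elements. With $\sigma=\tau\cup\{v\}$ and $S=(V\setminus\{v\})\setminus\tau$, one has $\rho_{\mathsf{M}}(S)=\rho_{\mathsf{M}/v}(S)+1$ whenever $v\in\mathrm{cl}_{\mathsf{M}}(S)$, so even when $\mathsf{M}$ is simple (so that $V'=V\setminus\{v\}$ and $\mathsf{M}/v$ has no loops) the conclusion $\rho_{\mathsf{M}/v}(V'\setminus\tau)\leq d$ only yields $\rho_{\mathsf{M}}(V\setminus\sigma)\leq d+1$, i.e.\ nothing beyond what the $(d+1)$-Leray reduction already gave. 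The proposed repair --- deleting a vertex $u$ parallel to $v$ and invoking the induction hypothesis for $\mathsf{K}[V\setminus\{u\}]$ and $\mathsf{M}-u$ --- does not touch this issue, and even for the parallel-element problem it fails as stated: the face $\sigma'$ returned by induction may contain $v$, and then restoring $u$ can again raise $\rho_{\mathsf{M}}(V\setminus\sigma')$ to $d+1$. So the step that, in your own words, ``genuinely exploits the near-$d$-Leray hypothesis'' is precisely the missing proof, which you defer back to \cite{holmsen_colorful_caratheodory}; as written, the proposal establishes only the weaker bound $\rho(V\setminus\sigma)\leq d+1$.
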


	\begin{proof}[Proof of Theorems \ref{drisko-general} and \ref{rainbow-matching-result}] The proof of both theorems use the same application of Theorem \ref{holmsen} with different values of $d$. So for 
		Theorem \ref{drisko-general} let $d = 2k-3$, and for  Theorem \ref{rainbow-matching-result} let $d = 3k-4$. Suppose we are given $d+2$ non-empty edge sets $E_1, \dots, E_{d+2}$ and set $E=\bigcup_i E_i$. In the case of Theorem \ref{drisko-general} we have the additional assumption that $E$ is the edge set of a bipartite graph.
		
		Let $\tilde{E}_i$ be the set of labelled edges of $E_i$, that is $\tilde{E}_i= \{(e, i): e \in E_i \}$, and let $\tilde{E}= \bigcup_i \tilde{E}_i$. Define the simplicial complex 
		\[\mathsf{K} = \{\sigma  \subseteq  \tilde{E} : \nu(\pi(\sigma)) < k\},\] where $\pi:2^{\tilde{E}}\to 2^{E}$ is the function defined by $\pi(\sigma)=\{e: (e,i)\in \sigma \}$.  
		
		By Theorem \ref{link-result}, it follows that $\NM_k(E)$ is near-$d$-Leray over the rational field. We now show that $\mathsf{K}$ is also near-$d$-Leray. That is, given a non-empty face $\sigma \in \mathsf{K}$, we show that $\mathsf{X}=\lk_K(\sigma)$ has vanishing reduced homology from the dimension $d$ and above. Let us simply denote $\pi^{-1}(\{e\})$ for $e\in E$ by $\tau_e$.
		If there is $e \in E$ such that $\emptyset\ne \sigma\cap \tau_e \subsetneq \tau_e$, then  $\mathsf{X}=2^{\tau_e\setminus \sigma} * \mathsf{X}[\tilde{E}\setminus (\tau_e\cup \sigma)]$. Using a complete matching on $2^{\tau_e\setminus \sigma}$, we can find a complete matching on $\mathsf{X}$ by using Lemma \ref{product}, so by Theorem \ref{morse_fundamental} it follows  that $\mathsf{X}$ has vanishing homology in all dimensions. 
		
		Hence, we assume that for all $e \in E$ we have that either $\sigma\cap \tau_e=\tau_e$ or $\sigma\cap \tau_e=\emptyset$. In this case, one can see that $\mathsf{X}$ is homotopy equivalent to $\pi(\mathsf{X})=\lk_{\NM_k(E)}(\pi(\sigma))$, for example by finding a collapsing sequence from $\mathsf{X}$ to a copy of $\pi(\mathsf{X})$ inside $\mathsf{X}$. (The argument is very similar to \cite[Proposition 2.1]{aharoni_holzman_jiang}.) Since $\tilde{H}_i(\pi(\mathsf{X}))=0$ for every $i\geq d$, we have that $\tilde{H}_i(\mathsf{X}) =0$ for every $i\geq d$.
		
		Now, let $\mathsf{M}$ be the partition matroid on the partition $\tilde{E}_1 \cup \cdots \cup \tilde{E}_{d+2}$. That is, let $\mathsf{M}$ be the matroid on $\tilde{E}$ defined by
		\[\mathsf{M}=\{\tilde{E}'\subseteq \tilde{E}: |\tilde{E}'\cap (E_i\times \{i\})|\leq 1 \textrm{ for every $i\in [d+2]$}\}.\] 
		Note that for the rank function $\rho$ of $\mathsf{M}$, $\rho(\tilde{E}')$ is the number of sets $\tilde{E}_i$ which $\tilde{E}'$ intersects. Therefore $\rho(\tilde{E})=d+2>d$.
		
		Suppose that $E_1\cup \cdots E_{d+2}$ does not contain any
		rainbow matchings of size $k$. Then, $\mathsf{M}$ is a subcomplex of $\mathsf{K}$. Thus we see that $\mathsf{K}$ and $\mathsf{M}$ satisfy the conditions in Theorem \ref{holmsen}. It follows that there is a face $\sigma \in \mathsf{K}$, and two distinct sets $\tilde{E}_i$ and $\tilde{E}_j$ such that $\tilde{E}_i\cup \tilde{E}_j \subseteq \sigma$. This implies that
		$$ \nu(E_i\cup E_j)=\nu(\pi(\tilde{E}_i\cup \tilde{E}_j))\leq \nu(\pi(\sigma))<k,$$
		which contradicts the assumption.\end{proof}
	
	Let us also remark that the proof method above allows us to generalize Theorems \ref{drisko-general} and \ref{rainbow-matching-result} to arbitrary matroids. (We leave the proof to the reader.)
	
	\begin{cor}
		Let $\mathsf{M}$ be a matroid on the edge set $E$ with rank function $\rho$ and suppose $\rho(E)\geq 3k-2$. If $\nu(F)\geq k$ for every flat $F\subset E$ of rank 2, then there is a matching of size $k$ which is independent in $\mathsf{M}$. 
		The same conclusion holds for a bipartite edge set $E$ under a weaker assumption that $\rho(E)\geq 2k-1$.
	\end{cor}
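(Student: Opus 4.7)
The plan is to mimic the proofs of Theorems~\ref{drisko-general} and~\ref{rainbow-matching-result}, replacing the partition matroid with the given matroid $\mathsf{M}$ and the pairwise union hypothesis with the rank-$2$ flat hypothesis. The main tool is again Theorem~\ref{holmsen} applied to the complex $\NM_k(E)$, which by Theorem~\ref{link-result} is near-$d$-Leray with $d=3k-4$ in general and $d=2k-3$ in the bipartite case.

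First I will argue by contradiction: assume $\mathsf{M}$ contains no matching of size $k$. Then every independent set $I$ of $\mathsf{M}$ satisfies $\nu(I)<k$, because any matching $M\subseteq I$ of size $k$ would itself be independent in $\mathsf{M}$. Hence $\mathsf{M}\subseteq \NM_k(E)$ as simplicial complexes on $E$. Since $\rho(E)\geq d+2$, Theorem~\ref{holmsen} yields a face $\sigma \in \NM_k(E)$ with $\rho(E\setminus\sigma)\leq d$.

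The decisive step, and the only place where the rank-$2$ flat hypothesis enters, will be to locate a rank-$2$ flat of $\mathsf{M}$ contained entirely in $\sigma$. Let $F_0=\mathrm{cl}_{\mathsf{M}}(E\setminus\sigma)$, a flat of rank at most $d$. Because $\rho(F_0)<\rho(E)$, I can pick $e_1\in E\setminus F_0$; and because $\rho(F_0\cup\{e_1\})\leq d+1<\rho(E)$, I can then pick $e_2\in E\setminus \mathrm{cl}_{\mathsf{M}}(F_0\cup\{e_1\})$. Both $e_1$ and $e_2$ lie in $\sigma$ (since $E\setminus\sigma\subseteq F_0$) and they are independent in $\mathsf{M}$, so $F:=\mathrm{cl}_{\mathsf{M}}(\{e_1,e_2\})$ is a flat of rank $2$. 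The hypothesis gives $\nu(F)\geq k$, and provided $F\subseteq \sigma$ this forces $\nu(\sigma)\geq k$, contradicting $\sigma\in\NM_k(E)$.

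The hard part will be verifying the disjointness $F\cap F_0=\emptyset$. I expect the argument to run as follows: for any $q\in F\cap F_0$, the pair $\{e_1,q\}$ must be independent in $\mathsf{M}$, since otherwise $e_1$ would be parallel to $q$ and hence lie in the flat $F_0$, contradicting the choice of $e_1$. But then $\{e_1,q\}$ spans the rank-$2$ flat $F$, so $e_2\in F=\mathrm{cl}_{\mathsf{M}}(\{e_1,q\})\subseteq \mathrm{cl}_{\mathsf{M}}(F_0\cup\{e_1\})$, contradicting the choice of $e_2$. Once this claim is in hand, the contradiction is immediate, and the bipartite refinement follows from exactly the same argument with $d=2k-3$ in place of $d=3k-4$.
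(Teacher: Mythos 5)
Your proof is correct and is essentially the argument the paper has in mind when it leaves the corollary to the reader: you apply Theorem \ref{holmsen} directly to $\NM_k(E)$ (no labelled-edge complex is needed here, since $\mathsf{M}$ lives on $E$ itself), and you replace the partition-matroid step ``$\sigma$ contains two whole parts'' by a closure argument producing a rank-2 flat contained in $\sigma$, which is exactly the right generalization. The only point worth making explicit is that your step ``$\{e_1,q\}$ dependent implies $e_1$ parallel to $q$, hence $e_1\in F_0$'' uses that $q$ is not a loop; this is covered by the standing convention in Section \ref{rainbow applications} that all matroids considered are loopless.
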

	
	\section{Final Remarks} \label{finals}
	
	One of the main open problems that remains is to determine the minimum number of matchings of size $k$ needed to guarantee the existence of a rainbow matching. As remarked in the introduction some further progress was made recently in \cite{rainbow_matching_general_improvement} and \cite{collaborative_bipartite}. However, since the Leray numbers of the non-matching complex can not be reduced in general, we expect that topological methods will not be useful in making further progress on this problem. 
	
	\vspace{1ex}
	
	Another intriguing question was raised in the paper by Linusson et al. \cite{linusson_bounded_matching_complex}. They asked whether the non-matching complex $\mathsf{NM}_k(G)$ is homotopy equivalent to a wedge of spheres in the case when $G$ is a complete multipartite graph. Using  the methods developed in this paper we can prove a special case: when $G$ is a complete multipartite graph on at least three vertex classes {\em and} where one of the vertex classes consists of a single vertex, then $\mathsf{NM}_k(G)$ is homotopy equivalent to a wedge of spheres of dimension $3k-4$. We expect that with further development of our tools, the problem can be fully settled. 
	
	\section*{Acknowledgements}
	The authors are thankful to Ron Aharoni, Joseph Briggs, Minho Cho, Jinha Kim, Minki Kim, and Alan Lew for helpful discussion and useful comments. This work was initiated during Ron Aharoni's visit to KAIST in summer 2018. We also give special thanks to Jinha Kim for letting us to reproduce her simple proof of Proposition \ref{hereditary}. Last but not least, we would like to thank the anonymous referees for their helpful comments and suggestions that helped us to improve the presentation.

	\bibliographystyle{alpha}
	\bibliography{ref}
	
	
	
\end{document}